\newcounter{mnotecount}[section]
\newcommand{\rmnote}[1]{}
\DeclareFontFamily{U}{mathb}{\hyphenchar\font45}
\DeclareFontShape{U}{mathb}{m}{n}{
      <5> <6> <7> <8> <9> <10> gen * mathb
      <10.95> mathb10 <12> <14.4> <17.28> <20.74> <24.88> mathb12
      }{}
\DeclareSymbolFont{mathb}{U}{mathb}{m}{n}
\let\dot\relax
\DeclareMathAccent{\dot}{0}{mathb}{"39}
\let\ddot\relax
\DeclareMathAccent{\ddot}{0}{mathb}{"3A}
\let\dddot\relax
\DeclareMathAccent{\dddot}{0}{mathb}{"3B}
\let\ddddot\relax
\DeclareMathAccent{\ddddot}{0}{mathb}{"3C}
\theoremstyle{plain}
\newtheorem*{theorem*}{Theorem}
\newtheorem{theorem}{Theorem}[section]
\newtheorem*{lemma*}{Lemma}
\newtheorem{lemma}[theorem]{Lemma}
\newtheorem*{proposition*}{Proposition}
\newtheorem{proposition}[theorem]{Proposition}
\newtheorem*{corollary*}{Corollary}
\newtheorem{corollary}[theorem]{Corollary}
\newtheorem*{claim*}{Claim}
\newtheorem*{conjecture*}{Conjecture}
\newtheorem{question}[theorem]{Question}
\newtheorem*{question*}{Question}
\theoremstyle{definition}
\newtheorem*{definition*}{Definition}
\newtheorem{definition}[theorem]{Definition}
\newtheorem*{example*}{Example}
\newtheorem*{algorithm*}{Algorithm}
\newtheorem*{remark*}{Remark}
\newtheorem*{remarks*}{Remarks}
\newtheorem{remark}[theorem]{Remark}
\newtheorem*{convention*}{Convention}
\numberwithin{equation}{section}
\newcommand{\al}{\alpha}
\newcommand{\be}{\beta}
\newcommand{\vt}{\vartheta}
\newcommand{\ka}{\kappa}
\newcommand{\la}{\lambda}
\newcommand{\rh}{\rho}
\newcommand{\si}{\sigma}
\newcommand{\ta}{\tau}
\newcommand{\vh}{\varphi}
\newcommand{\ps}{\psi}
\newcommand{\om}{\omega}
\newcommand{\Ga}{\Gamma}
\newcommand{\Si}{\Sigma}
\newcommand{\C}{\mathbb{C}}
\newcommand{\N}{\mathbb{N}}
\newcommand{\R}{\mathbb{R}}
\newcommand{\cB}{\mathcal{B}}
\newcommand{\cJ}{\mathcal{J}}
\newcommand{\fM}{\mathfrak{M}}
\newcommand{\fS}{\mathfrak{S}}
\newcommand{\fW}{\mathfrak{W}}
\newcommand{\p}{\partial}
\renewcommand{\Im}{\mathrm{Im}}
\newcommand{\supp}{\on{supp}}
\newcommand{\on}{\operatorname}
\newcommand{\sr}[1]%
{\ifmmode{}^\dagger\else${}^\dagger$\fi\ifvmode
\vbox to 0pt{\vss
 \hbox to 0pt{\hskip\hsize\hskip1em
 \vbox{\hsize3cm\raggedright\pretolerance10000
 \noindent #1\hfill}\hss}\vss}\else
 \vadjust{\vbox to0pt{\vss%
 \hbox to 0pt{\hskip\hsize\hskip1em%
 \vbox{\hsize3cm\raggedright\pretolerance10000%
 \noindent #1\hfill}\hss}\vss}}\fi%
}
\newcommand{\RR}{\mathbb R}
\newcommand{\NN}{\mathbb N}
\newcommand{\A}{\;\forall}
\newcommand{\E}{\;\exists}
\newcommand{\ol}{\overline}
\newcommand{\ul}{\underline}
\title[On the extension of Whitney ultrajets]
{On the extension of Whitney ultrajets}
\author[A.~Rainer]{Armin Rainer}
\address{A.~Rainer: 
Fakult\"at f\"ur Mathematik, Universit\"at Wien, 
Oskar-Morgenstern-Platz~1, A-1090 Wien, Austria}
\email{armin.rainer@univie.ac.at}
\author[G.~Schindl]{Gerhard Schindl}
\address{G.~Schindl: Departamento de \'Algebra, An\'alisis Matem\'atico, Geometr\'{\i}a y Topolog\'{\i}a,
Facultad de Ciencias,
Universidad de Valladolid,
Paseo de Bel\'en 7,
47011 Valladolid, Spain}
\email{gerhard.schindl@univie.ac.at}
\begin{document}

\begin{abstract}
	We prove necessary and sufficient conditions for the validity of Whitney's extension 
	theorem in the ultradifferentiable Roumieu setting with controlled loss of regularity.       
\end{abstract}

\thanks{Supported by the FWF-Projects P~26735-N25 and J 3948-N35}
\keywords{Whitney extension theorem in the ultradifferentiable setting, Roumieu type classes, controlled loss of regularity, 
weight functions}
\subjclass[2010]{26E10, 30D60, 46E10, 58C25}
\date{September 4, 2017}

\maketitle

\section{Introduction}

Whitney's extension theorem \cite{Whitney34a} 
provides conditions for the extension of jets defined in closed subsets of $\R^n$ to 
infinitely differentiable functions on $\R^n$. Its ultradifferentiable analogues ask for a 
precise determination how the growth rate of the jets is preserved by their extension. 
The growth rate of the jets, respectively of the derivatives of a smooth function, is 
measured by weight functions $\om$. 
We denote by $\cB^{\{\om\}}(\R^n)$ the associated space of ultradifferentiable functions $f$ 
on $\R^n$; by definition, the growth rate of the sequence $(\|f^{(\al)}\|_{L^\infty(\R^n)})_{\al \in \N^n}$ is 
regulated in terms of $\om$. 
We use the letter $\cB$ to emphasize that the bounds are global in $\R^n$. 
These classes of ultradifferentiable functions were introduced by Beurling \cite{Beurling61} and Bj\"orck \cite{Bjoerck66} 
and equivalently described by Braun, Meise, and Taylor \cite{BMT90}.
Similarly, $\cB^{\{\om\}}(E)$ is the space of jets on the compact subset $E \subseteq \R^n$ 
with a growth rate regulated by $\om$, so-called \emph{ultrajets}.
Precise definitions will be given in \Cref{sec:spaces}. 

The weight functions $\om$ which allow for an extension theorem preserving the class $\cB^{\{\om\}}$ have been 
fully characterized. We denote by $j_E^\infty$ the mapping which sends a smooth function 
to the infinite jet consisting of its partial derivatives of all orders restricted to $E$. 

\begin{theorem} \label{thm:preservingclass}
  Let $\om$ be a weight function. The following conditions are equivalent:
  \begin{enumerate}
    \item For every compact $E \subseteq \R^n$ the jet mapping $j^\infty_E : \cB^{\{\om\}}(\R^n) \to \cB^{\{\om\}}(E)$ 
    is surjective.
    \item There is a compact $E \subseteq \R^n$ such that $j^\infty_E : \cB^{\{\om\}}(\R^n) \to \cB^{\{\om\}}(E)$ 
    is surjective. 
    \item $\om$ is \emph{strong}, i.e., $\int_{1}^\infty \frac{\om(tu)}{u^2}\,du \le C\om(t) + C$ for all $t>0$ and some 
    $C>0$.
  \end{enumerate}
\end{theorem}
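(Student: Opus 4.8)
The plan is to prove the cycle $(1)\Rightarrow(2)\Rightarrow(3)\Rightarrow(1)$; the implication $(1)\Rightarrow(2)$ is trivial. It is convenient first to record, via the substitution $s=tu$, the equivalent form of strongness $t\int_t^\infty \om(s)\,s^{-2}\,ds \le C\om(t)+C$, and to recall that the seminorms defining $\cB^{\{\om\}}$ are governed by the Young conjugate $\vh^*$ of $\vh(x)=\om(e^x)$. Both nontrivial directions will be read off from this reformulation.

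For the sufficiency $(3)\Rightarrow(1)$ I would carry out a Whitney-type construction adapted to the weight. Given $F=(F^\al)_{\al\in\N^n}\in\cB^{\{\om\}}(E)$, set $U=\R^n\setminus E$ and fix a Whitney decomposition of $U$ into dyadic cubes $Q_j$ with diameters $d_j$ comparable to $\on{dist}(Q_j,E)$, together with a subordinate partition of unity $\{\vh_j\}$ obeying the standard bounds $|\vh_j^{(\al)}|\le C_{|\al|}\,d_j^{-|\al|}$. For each $j$ choose a nearest point $p_j\in E$, let $P_j$ be the Taylor polynomial of $F$ at $p_j$, and define $\tilde F=\sum_j \vh_j P_j$ on $U$ and $\tilde F=F^0$ on $E$. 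The classical Whitney estimates show that $\tilde F$ is smooth and realizes the jet, so the whole issue is the global $\om$-bound on $\partial^\al\tilde F$. Here the identity $\sum_j\vh_j\equiv1$ lets one replace each $P_j$ by a difference of Taylor polynomials, which is controlled by the ultradifferentiable remainder of $F$ at scale $d_j$; summing the resulting dyadic series over the scales $d_j\sim2^{-k}$ reproduces, up to constants, precisely $t\int_t^\infty\om(s)\,s^{-2}\,ds$. Strongness bounds this by $C\om(t)+C$, which is exactly what is needed to place $\tilde F$ in $\cB^{\{\om\}}(\R^n)$.

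For the necessity $(2)\Rightarrow(3)$ I would argue contrapositively, exploiting that a single point cannot detect strongness, so the accumulation structure of $E$ must be used. Assuming $j^\infty_E$ is surjective for some compact $E$, one sees $E$ has an accumulation point, which may be placed at the origin with a sequence $x_k\to0$ in $E$ along a line, reducing matters to one variable. A closed-graph argument in the relevant $(LB)$-spaces (which are ultrabornological and webbed) turns surjectivity into a quantitative extension bound with fixed loss. I would then tailor a test jet on the sequence $(x_k)$, built from $\om$ so that the need to interpolate across the gaps forces the quantity $t\int_t^\infty\om(s)\,s^{-2}\,ds$ to be dominated by $\om(t)$; the uniform extension bound then yields the strongness inequality.

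The hard part will be the sufficiency estimate in $(3)\Rightarrow(1)$: converting the Whitney series, with its derivative loss $d_j^{-|\al|}$ from the partition of unity and the remainder estimates on $F$ expressed through $\vh^*$, into a clean bound, and recognizing that the summation over dyadic cubes reproduces exactly the integral $t\int_t^\infty\om(s)\,s^{-2}\,ds$. It is there that strongness is genuinely used, and the necessity direction confirms that no weaker condition can suffice.
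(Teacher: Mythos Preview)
The paper does not itself prove this theorem; it is quoted from \cite{BBMT91} and \cite{Abanin01}. However, the argument of \cite{BBMT91} for $(3)\Rightarrow(1)$ is essentially recalled (in a more general form) in Sections~\ref{sec:partition} and~\ref{sec:extension} of the paper, so one can compare.

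Your sufficiency argument has a genuine gap at the partition of unity. The ``standard bounds'' $|\vh_j^{(\al)}|\le C_{|\al|}\,d_j^{-|\al|}$ are those of a $C^\infty$ partition of unity; the implicit constants $C_{|\al|}$ grow at least like $|\al|!$, and feeding these into the Leibniz rule together with the remainder estimates does \emph{not} place $\tilde F$ back in $\cB^{\{\om\}}$. The entire content of \cite{BBMT91} (recapitulated in \Cref{proposition22BBMTWhitney} and \Cref{Proposition6}) is the construction, via H\"ormander's $L^2$ method and a Paley--Wiener argument, of cut-off functions whose derivatives are bounded by $W^m_{|\be|}\exp\bigl(\tfrac{1}{p}\om^\star(pr)\bigr)$ for some $W^m$ in the weight matrix of $\om$; it is this $\om^\star$-factor, not a naive dyadic sum, that interacts with the remainder term (through the functions $h_m$) to close the estimate. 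A second, related omission: the degree of the local Taylor polynomial must depend on the distance to $E$, via the counting functions $\ol\Ga$ and $\ul\Ga$ (see \eqref{extensionformula}); a fixed-degree choice will not work. Your heuristic that the dyadic sum ``reproduces'' the strongness integral is not how the mechanism operates.

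On the necessity side, your premise that ``a single point cannot detect strongness'' is incorrect: by \cite{BonetMeiseTaylor92}, surjectivity of the Borel map $j^\infty_{\{0\}}:\cB^{\{\om\}}(\R^n)\to\cB^{\{\om\}}(\{0\})$ is already equivalent to $\om$ being strong. The subtlety in $(2)\Rightarrow(3)$ for an \emph{arbitrary} compact $E$ is rather that surjectivity for $E$ does not obviously restrict to surjectivity at a point of $E$; this is what Abanin \cite{Abanin01} handles, and it does not proceed via accumulation points and interpolation in the way you sketch.
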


Note that a strong weight function is necessarily non-quasianalytic.
\Cref{thm:preservingclass} is due to Bonet, Braun, Meise, and Taylor \cite{BBMT91} and Abanin \cite{Abanin01} 
(the latter showed the equivalence with (2)). 
Partial results have been contributed in earlier papers, e.g.\ Meise and Taylor \cite{MeiseTaylor88}, 
Bonet, Meise, and Taylor \cite{BonetMeiseTaylor89a}. 
We want to mention that the statement remains true if the Roumieu type classes $\cB^{\{\om\}}$ are replaced by 
the Beurling type classes $\cB^{(\om)}$, but we shall only be concerned with the Roumieu case in this paper.

The purpose of this paper is to study the extension problem for weight functions $\om$ which are not strong.
In that case the extension involves a loss of regularity: the class is not preserved.  
So we are led to the following problem.

\begin{question} \label{Q}
  Let $\om$ be a non-quasianalytic weight function. 
  Let $\si$ be another weight function. 
  Under which conditions is the jet mapping $j^\infty_E$ defined on $\cB^{\{\om\}}(\R^n)$
  surjective onto $\cB^{\{\si\}}(E)$ for all compact $E \subseteq \R^n$?
\end{question}

A complete answer has been given for the one-point set $E=\{0\}$, by Bonet, Meise, and Taylor \cite{BonetMeiseTaylor92}, 
and for compact convex sets 
$E$, by Langenbruch \cite{Langenbruch94}. 
In these cases the mapping $j^\infty_{E} : \cB^{\{\om\}}(\R^n) \to \cB^{\{\si\}}(E)$ is surjective if and only if 
\begin{equation} \label{intro:mixed}
  \E C>0 \A t > 0 : \int_{1}^\infty \frac{\om(tu)}{u^2}\,du \le C\si(t) + C.   
\end{equation} 
So this condition is necessary for our problem.

We answer \Cref{Q} (for all compact $E \subseteq \R^n$) under three additional conditions. 
The first condition is that $\om$ is concave. This has technical reasons, but it is not incongruous, since every strong 
weight function is equivalent to a concave one; cf.\ \cite[Proposition 1.3]{MeiseTaylor88}. 
Secondly, we require that $\si(t) = o(t)$ as $t \to \infty$; again any strong weight function has this property. 

To explain the third condition let us recall that any weight function $\si$ is associated with a family of weight sequences 
$\fS = \{S^x\}_{x>0}$ such that for the corresponding ultradifferentiable spaces we have 
\begin{equation} \label{intro:rep}
    \cB^{\{\si\}}(\R^n) = \on{ind}_{x>0} \cB^{\{S^x\}}(\R^n) 
    \quad (\text{and } \cB^{(\si)}(\R^n) = \on{proj}_{x>0} \cB^{(S^x)}(\R^n)).
\end{equation}
The condition we require is that 
  \begin{equation} \label{intro:good}
    \A x >0 \E y>0 \E C \ge 1 \A 1\le j \le k : \frac{S^{x}_j}{jS^x_{j-1}} \le C\, \frac{S^{y}_k}{k S^y_{k-1}}. 
  \end{equation}
The following is our main result.

\begin{theorem} \label{main}
Let $\om$ be a non-quasianalytic concave weight function. 
Let $\si$ be a weight function satisfying $\si(t) = o(t)$ as $t \to \infty$ and \eqref{intro:good}. 
Then the following conditions are equivalent:
\begin{enumerate}
    \item For every compact $E \subseteq \R^n$ the jet mapping $j^\infty_E : \cB^{\{\om\}}(\R^n) \to \cB^{\{\si\}}(E)$ 
    is surjective. 
    \item There is $C>0$ such that $\int_{1}^\infty \frac{\om(tu)}{u^2}\,du \le C\si(t) + C$ for all $t>0$.
  \end{enumerate}
\end{theorem}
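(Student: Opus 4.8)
The plan is to prove the two implications separately, with essentially all of the content concentrated in $(2)\Rightarrow(1)$. For $(1)\Rightarrow(2)$ I would simply specialize to the one-point set $E=\{0\}$: by \cite{BonetMeiseTaylor92} the surjectivity of $j^\infty_{\{0\}}:\cB^{\{\om\}}(\R^n)\to\cB^{\{\si\}}(\{0\})$ is equivalent to the mixed condition \eqref{intro:mixed}, which is literally statement (2). Thus (1) forces (2), and here neither concavity of $\om$ nor \eqref{intro:good} is needed.

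The substance is $(2)\Rightarrow(1)$. First I would pass from weight functions to weight sequences via the representation \eqref{intro:rep}. A given jet $F\in\cB^{\{\si\}}(E)$ lies in $\cB^{\{S^x\}}(E)$ for some $x>0$, and it suffices to produce an extension lying in $\cB^{\{W^z\}}(\R^n)$ for some $z>0$, where $\{W^z\}_{z>0}$ is the family associated with $\om$ through \eqref{intro:rep}. The integral inequality (2) should be recast as a \emph{mixed growth relation} between the sequence $S^x$ governing the jet and a target sequence $W^z$ from the family of $\om$; here the concavity of $\om$ (yielding sub-additivity-type estimates) and the condition $\si(t)=o(t)$ (ensuring non-degeneracy of the sequences $S^x$) are used to make this translation clean. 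The problem is thereby reduced to a Whitney extension theorem in the Denjoy--Carleman setting with a controlled loss, from the jet class $\cB^{\{S^x\}}(E)$ into $\cB^{\{W^z\}}(\R^n)$.

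For the weight-sequence extension I would run the classical Whitney machinery adapted to the ultradifferentiable category. Cover $\R^n\setminus E$ by a Whitney family of dyadic cubes $Q$ with diameter comparable to $d(Q,E)=:r_Q$, and fix a partition of unity $\{\ph_Q\}$ subordinate to it whose derivatives obey $\cB^{\{\om\}}$-bounds with the natural scaling $r_Q^{-|\al|}$ in the cube size; such a partition exists because $\om$ is non-quasianalytic. For each $Q$ choose a nearest point $a_Q\in E$ and a truncation order $m_Q$ that increases as $r_Q\to0$, and set $f=\sum_Q \ph_Q\cdot T^{m_Q}_{a_Q}F$ off $E$, matching the jet $F$ on $E$. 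The order $m_Q$ must be tied to $r_Q$ so that the Taylor remainder of the $\cB^{\{S^x\}}$-jet is balanced against the factor $r_Q^{-|\al|}$ produced when derivatives fall on the $\ph_Q$; concretely one takes $m_Q$ near the largest order for which $r_Q^{m_Q}S^x_{m_Q}$ is still controlled, and this balance is exactly what the mixed relation coming from (2) provides.

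The main obstacle is the final derivative estimate showing $f\in\cB^{\{W^z\}}(\R^n)$ for a \emph{single} $z$ uniform over all cubes. By Whitney's telescoping, $\p^\al f$ at a point near $E$ is controlled by differences of truncated Taylor polynomials of adjacent cubes, so the dangerous $r_Q^{-|\al|}$ factors get absorbed into remainder terms of the form $r_Q^{\,m_Q-|\al|}\cdot(\text{jet bound of order }m_Q)$. Converting these, for the varying truncation orders $m_Q$, into one uniform $\cB^{\{W^z\}}$-bound requires that the normalized step ratios $S^x_j/(jS^x_{j-1})$ be essentially increasing across the family as $j$ ranges up to the working order $k$ -- which is precisely condition \eqref{intro:good}. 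I expect this uniformization, together with verifying $j^\infty_E f=F$ (the Whitney compatibility conditions on $E$), to be the technical heart of the argument; the remaining bounds are Leibniz-rule bookkeeping made tractable by the concavity of $\om$.
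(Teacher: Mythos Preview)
Your outline for $(1)\Rightarrow(2)$ matches the paper exactly, but there is a genuine gap in your plan for $(2)\Rightarrow(1)$.

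The partition of unity you propose, with ``$\cB^{\{\om\}}$-bounds with the natural scaling $r_Q^{-|\al|}$'', is the standard Chaumat--Chollet device. The paper explicitly notes (in the introduction) that running this approach through the weight-matrix representation was carried out in \cite{RainerSchindl16a} and yielded only a weaker theorem with an additional restrictive hypothesis on the weights. The obstruction is precisely the accounting you flag at the end: when the $r_Q^{-|\al|}$ factors from the partition meet the remainder bounds, closing the estimate in a single $W^z$ uniformly over all cubes fails for general $\om$ not satisfying \eqref{om6}, because the weight matrix $\{W^z\}$ is a genuine one-parameter family and the naive Leibniz bookkeeping forces $z$ to drift with the cube.

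What the paper does instead is use the \emph{special} cut-off functions of \cite{BBMT91}, adapted here to the pair $(\om,\si)$. These are built via harmonic extensions and H\"ormander's $\bar\partial$-method, and their derivative bounds have a fundamentally different shape:
\[
  |\vh_{i,p}^{(\be)}(x)| \le M\, W^m_{|\be|}\, \exp\Big(\tfrac{A_1}{p}\si^{\star}\big(\tfrac{b_1 p}{A_2}\on{diam}Q_i\big)\Big),
\]
with a \emph{fixed} $W^m$ (no $r_Q^{-|\be|}$) and the distance dependence isolated in the $\si^\star$-exponential. In the Leibniz expansion this gives $W^m_{|\al|-|\be|}\,\dddot S_{|\be|}\le D_1 W_{|\al|}$ directly, and the $\si^\star$-factor is then cancelled against the $h_{\dddot s}$-factor from the Taylor remainder via the relation $\exp(\si^\star(t))\lesssim h_{\tilde s}(t/C)^{-C}$ (Lemma~\ref{lem:star} and its corollary). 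This cancellation is the crux, and it is unavailable with the standard scaled partition. Relatedly, the concavity of $\om$ and condition (2) are not used to ``translate'' (2) into a sequence relation as you suggest; they enter in the construction of these special cut-offs (Lemmas~\ref{lemma25BBMTWhitney}--\ref{lemma28BBMTWhitney}), where concavity is needed for the subharmonicity argument and (2) for the estimate on the harmonic extension $P_\om$.
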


The implication (1) $\Rightarrow$ (2) follows from the aforementioned result of 
\cite{BonetMeiseTaylor92} and does not require the three 
additional conditions on $\om$ and $\si$. We discuss the condition \eqref{intro:good} and its relation to other 
properties of the weight function in \Cref{sec:good} and \Cref{sec:strong}. 
Let us emphasize that, while (1) and (2) in \Cref{main} are invariant under equivalence of weight functions (two weight 
functions are equivalent if and only if they generate the same class),  
concavity and \eqref{intro:good} are not invariant. Thus, for the validity of \Cref{main} is is enough that the 
assumptions on $\om$ and $\si$ are satisfied up to equivalence of weight functions.

The problem put forward in \Cref{Q} has been solved for Denjoy--Carleman classes by Chaumat and Chollet 
\cite{ChaumatChollet94}, where the 
growth rate of the derivatives is controlled by weight sequences $M$. 
Indeed, under suitable conditions on the weight sequences, 
\cite{ChaumatChollet94} proved that 
the jet mapping $j^\infty_E : \cB^{\{N\}}(\R^n) \to \cB^{\{M\}}(E)$ is surjective, for every compact $E \subseteq \R^n$, 
if and only if 
\[
  \E C>0 \A k \in \N : \sum_{\ell \ge k} \frac{N_{\ell -1}}{N_\ell} \le C \, \frac{k M_{k-1}}{M_k}.
\] 
The case that the extension preserves the class (i.e., $M=N$) is due to Bruna \cite{Bruna80} 
(see also \cite{Lambert79}).
We will see that our \Cref{main} is a generalization of this result (under an additional assumption on $N$). In general, 
a class $\cB^{\{\om\}}$ cannot be represented as a class $\cB^{\{M\}}$ for a weight sequence $M$, and vice versa, 
cf.\ Bonet, Meise, and Melikhov\cite{BMM07} and Rainer and Schindl \cite{RainerSchindl12}.

The approach of \cite{ChaumatChollet94} was the starting point of our recent paper \cite{RainerSchindl16a} in which we 
obtained a generalization of their extension result for \emph{admissible} unions of Denjoy--Carleman classes. 
By virtue of \eqref{intro:rep}, we deduced a version of \Cref{main} which however required an restrictive undesired 
condition on the involved weight functions.  

In the present paper we surmount this problem by using the special cut-off functions which were constructed in 
\cite{BBMT91}. They are tailor-made for weight functions $\om$; we actually need a modified version 
for two weight functions $\om$ 
and $\si$ related by \eqref{intro:mixed}. Then we combine the resulting partition of unity $\{\vh_i\}$ subordinate to 
a collection of Whitney cubes $Q_i$ with center $x_i$ with the technique of 
\cite{ChaumatChollet94} which is based on a extension method of Dynkin \cite{Dynkin80}. 
The extension of a ultrajet $F$ of class $\cB^{\{\si\}}$ is defined as a linear combination 
$$\sum_i \vh_i\, T_{\hat x_i}^{p(x_i)} F$$
of Taylor polynomials, where the degree $p(x_i)$
depends on the distance of $x_i$ to $E$ and $\hat x_i \in E$ realizes this distance. 
More precisely, the dependence of $p$ is through counting functions corresponding to the sequences in $\fS$, 
the family associated with $\si$. It is this part of the proof which necessitates the assumption \eqref{intro:good}.

The paper is structured as follows.
We introduce weight functions, weight sequences, and the corresponding spaces of ultradifferentiable functions 
and jets in \Cref{sec:spaces}. 
A deeper analysis of the weights, their associated functions, and properties needed in the proof of the extension theorem
follows in \Cref{sec:moreonweights}. 
We recall
the construction of special cut-off functions due to \cite{BBMT91} in \Cref{sec:partition}; 
since we need a slight generalization for two weight functions $\om$ and $\si$ satisfying \eqref{intro:mixed}, 
we indicate the required modifications in the proof.
The main theorem \ref{main} and its corollaries are proved in \Cref{sec:extension}.

\section{Spaces of ultradifferentiable functions and jets} \label{sec:spaces}

\subsection{Weight functions} \label{weightfunction}

By a \emph{weight function} we mean a continuous increasing function $\om : [0,\infty) \to [0,\infty)$ with $\om(0) =0$ 
and 
$\lim_{t \to \infty} \om(t) = \infty$ that satisfies
\begin{align}
   & \om(2t) = O(\om(t)) \quad\text{ as } t \to \infty, \label{om1}\\
   & \om(t) = O(t) \quad\text{ as } t \to \infty, \label{om2}\\
   & \log t = o(\om(t)) \quad\text{ as } t \to \infty, \label{om3}\\
   & \vh(t) := \om(e^t) \text{ is convex}.  \label{om4}
\end{align} 
A weight function is called \emph{non-quasianalytic} if
\begin{equation}
   \int_0^\infty \frac{\om(t)}{1+t^2} \, dt <\infty.
 \end{equation}
Two weight functions $\om$ and $\si$ are said to be \emph{equivalent} if $\om(t) = O(\si(t))$ and $\si(t) = O(\om(t))$ 
as $t \to \infty$. 
For each weight function $\om$ there is an equivalent weight function 
$\tilde \om$ such that $\om(t) = \tilde \om(t)$ for large $t>0$ 
and $\tilde \om |_{[0,1]} =0$. It is thus no restriction to assume that $\om |_{[0,1]} =0$ when necessary.

The \emph{Young conjugate} $\vh^*$ of $\vh$ is defined by  
\[
  \vh^*(t) := \sup_{s\ge 0} \big(st-\vh(s)\big), \quad t \ge 0.
\]
Assuming $\om |_{[0,1]} =0$, we have that    
$\vh^*$ is a convex increasing function satisfying $\vh^*(0)=0$, $t/\vh^*(t) \to 0$ as $t \to \infty$, and $\vh^{**}=\vh$; 
cf.\ \cite{BMT90} and \cite[Remark 1.2]{BBMT91}.

\subsection{The space \texorpdfstring{$\cB^{\{\om\}}(\R^n)$}{} of ultradifferentiable functions} 

Let $\om$ be a weight function and $\rh >0$.  
We consider the Banach space $\cB^{\om}_\rh(\R^n) := \{f \in C^\infty (\R^n) : \|f\|^\om_{\rh}< \infty\}$, 
where  
\[
  \|f\|^\om_{\rh} := \sup_{x \in \R^n,\,\al \in \N^n} |\p^\al f(x)| \exp(-\tfrac{1}{\rh} \vh^*(\rh |\al|)),
\]
and the inductive limit 
\begin{equation*}
  \cB^{\{\om\}}(\R^n) := \on{ind}_{\rh \in \N} \cB^{\om}_\rh(\R^n).
\end{equation*}
For weight functions $\om$ and $\si$ we have $\cB^{\{\om\}} \subseteq \cB^{\{\si\}}$ if and only if
$\si(t) = O(\om(t))$ as $t \to \infty$,
cf.\ \cite[Corollary 5.17]{RainerSchindl12}; in particular, $\om$ and $\si$ are equivalent if and only if 
$\cB^{\{\om\}} = \cB^{\{\si\}}$.  
The space $\cB^{\{\om\}}(\R^n)$ contains non-trivial functions with compact support if and only if $\om$ is 
non-quasianalytic (cf.\ \cite{BMT90} or \cite{RainerSchindl12}).

\subsection{Weight sequences} \label{weights}

Let $\mu = (\mu_k)$ be a positive increasing sequence,  
$1 = \mu_0 \le \mu_1 \le \mu_2 \le \cdots$.
We associate the sequences $M=(M_k)$ and $m = (m_k)$ defined by 
\begin{equation} \label{def:M}
  \mu_0 \mu_1 \mu_2 \cdots \mu_k = M_k = k!\, m_k, 
\end{equation}
for all $k \in \N$. 
We call $M$ a \emph{weight sequence} if $M_k^{1/k} \to \infty$.    
A weight sequence $M$ is called \emph{non-quasianalytic} if 
\begin{equation}
 \sum_{k} \frac{1}{\mu_k} < \infty. 
\end{equation}
We say that $M$ has \emph{moderate growth} if there exists $C>0$ such that $M_{j+k} \le C^{j+k} M_j M_k$ for all
$j,k \in \N$, or equivalently,
\begin{equation}
  \mu_k \lesssim M_k^{1/k};
\end{equation}
we refer to \cite[Lemma 2.2]{RainerSchindl16a} for a proof and more equivalent conditions.
(For real valued functions $f$ and $g$ we write $f \lesssim g$ if $f \le C g$ for some positive constant $C$.)

Two weight sequences $M$ and $N$ are said to be \emph{equivalent} if there is a constant $C>0$ such that 
$1/C \le M_k^{1/k}/N_k^{1/k} \le C$ for all $k$. 

\begin{remark}
  (1) Some authors (e.g.\ \cite{ChaumatChollet94}, \cite{RainerSchindl12}) 
  prefer to work with \emph{``sequences without factorials''}, that is 
  $m_k$ instead of $M_k$. 

  (2) Note that $\mu$ uniquely determines $M$ and $m$, and vice versa. In analogy we shall use 
$\nu \leftrightarrow N \leftrightarrow n$, $\si \leftrightarrow S \leftrightarrow s$, etc.
That $\mu$ is increasing means precisely that $M$ is logarithmically convex (\emph{log-convex} for short). 
Log-convexity of $m$ is a stronger condition: if $m$ is log-convex we shall say that $M$ is 
\emph{strongly log-convex}. 
\end{remark}

\begin{lemma}[Properties of weight sequences] \label{lem:basicM}
Let $1 = \mu_0 \le \mu_1 \le \mu_2 \le \cdots$. Then:
  \begin{enumerate}
      \item $M_k^{1/k}$ is increasing, equivalently,
      \begin{equation}\label{mucompare}
        \A k\in \N_{>0} : M_k^{1/k} \le \mu_k.
      \end{equation}
      \item $M_jM_k\le M_{j+k}$ for all $k,j$. 
      \item If $M_k^{1/k} \to \infty$, then $\mu_k \to \infty$.         
  \end{enumerate}  
\end{lemma}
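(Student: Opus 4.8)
The plan is to treat the three assertions in order, using only the defining relation $M_k = \mu_0\mu_1\cdots\mu_k$ together with the standing monotonicity $1 = \mu_0 \le \mu_1 \le \cdots$. All three are elementary, and in fact (3) will drop straight out of (1).

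For the equivalence in (1), I would first record the recursion $M_k = M_{k-1}\,\mu_k$, valid for every $k \ge 1$. Raising the inequality $M_k^{1/k} \le \mu_k$ to the $k$-th power and substituting the recursion shows that it is equivalent to $M_{k-1} \le \mu_k^{k-1}$, i.e.\ to $M_{k-1}^{1/(k-1)} \le \mu_k$. On the other hand, the monotonicity statement $M_{k-1}^{1/(k-1)} \le M_k^{1/k}$ becomes, after raising to the power $k(k-1)$ and again inserting $M_k = M_{k-1}\mu_k$, exactly $M_{k-1}^{k} \le M_{k-1}^{k-1}\mu_k^{k-1}$, that is $M_{k-1} \le \mu_k^{k-1}$. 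Thus ``$M_k^{1/k}$ is increasing'' and ``$M_k^{1/k} \le \mu_k$'' reduce to one and the same condition, which gives the asserted equivalence. It then remains to verify that this common condition actually holds: since $\mu_0 = 1$ and $\mu_i \le \mu_k$ for $1 \le i \le k$, the product $M_k = \mu_1\cdots\mu_k$ (the factor $\mu_0$ being trivial) is bounded by $\mu_k^{k}$, whence $M_k^{1/k} \le \mu_k$.

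For (2) I would argue by a factorwise comparison. Assuming without loss of generality $j \le k$, the quotient $M_{j+k}/M_k$ equals $\mu_{k+1}\mu_{k+2}\cdots\mu_{k+j}$, a product of $j$ terms, while $M_j = \mu_1\cdots\mu_j$ is likewise a product of $j$ terms (again using $\mu_0 = 1$). Monotonicity yields $\mu_i \le \mu_{k+i}$ for each $1 \le i \le j$, so multiplying these inequalities gives $M_j \le M_{j+k}/M_k$, which is precisely $M_j M_k \le M_{j+k}$.

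Finally, (3) is immediate from the bound $M_k^{1/k} \le \mu_k$ established in (1): if $M_k^{1/k} \to \infty$, then $\mu_k \ge M_k^{1/k} \to \infty$ as well. The only point requiring any care throughout is the indexing in the algebraic rewriting of (1), in particular keeping track of which inequality sits at level $k$ versus $k-1$; but there is no genuine obstacle, as each step is a one-line manipulation of the product defining $M_k$.
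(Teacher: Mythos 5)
Your proof is correct; the paper itself offers no argument for this lemma, dismissing it as ``straightforward to check,'' and your elementary verification---reducing both conditions in (1) to the single inequality $M_{k-1} \le \mu_k^{k-1}$, comparing factors termwise for (2), and deducing (3) directly from \eqref{mucompare}---is precisely the kind of routine check intended. No gaps.
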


\begin{proof}
  This is straightforward to check.
\end{proof}

\subsection{The space \texorpdfstring{$\cB^{\{M\}}(\R^n)$}{} of ultradifferentiable functions} 

Let $M=(M_k)$ be a weight sequence and $\rh >0$.
We consider the Banach space $\cB^{M}_\rh(\R^n) := \{f \in C^\infty (\R^n) : \|f\|^M_{\rh}< \infty\}$, where 
\[
  \|f\|^M_{\rh} := \sup_{x \in \R^n,\, \al \in \N^n} \frac{|\p^\al f(x)|}{\rh^{|\al|} M_{|\al|}},
\]
and the inductive limit 
\begin{equation*} 
  \cB^{\{M\}}(\R^n) := \on{ind}_{\rh \in \N} \cB^{M}_\rh(\R^n).  
\end{equation*}
Traditionally, $\cB^{\{M\}}(\R^n)$ is called a \emph{Denjoy--Carleman class}.
For weight sequences $M$ and $N$ we have $\cB^{\{M\}} \subseteq \cB^{\{N\}}$ if and only if 
$M_k^{1/k} \lesssim N_k^{1/k}$; one implication is obvious, the other follows from the existence of 
\emph{characteristic} $\cB^{\{M\}}$-functions, cf. \cite[Lemma 2.9 and Proposition 2.12]{RainerSchindl12}.
In particular, $M$ and $N$ are equivalent if and only if the corresponding classes coincide.
By the Denjoy--Carleman theorem 
(e.g.\ \cite[Theorem 1.3.8]{Hoermander83I}),
$\cB^{\{M\}}(\R^n)$ contains non-trivial elements 
with compact support if and only if $M$ is non-quasianalytic.

\subsection{The connection between \texorpdfstring{$\cB^{\{\om\}}(\R^n)$}{} and \texorpdfstring{$\cB^{\{M\}}(\R^n)$}{}} 

With any weight function $\om$ we can associate a family of weight sequences $\{W^x\}_{x>0}$ such that 
$\cB^{\{\om\}}(\R^n)$ can be described us the union of the spaces $\cB^{\{W^x\}}(\R^n)$; see \Cref{representation} below.
 
\begin{definition}[The weight matrix associated with a weight function]
With a weight function $\om$ we associate a \emph{weight matrix} $\fW = \{W^x\}_{x>0}$ by setting 
\begin{equation*}
  W^x_k := \exp(\tfrac{1}{x}\vh^*(x k)), \quad k \in \N;
\end{equation*}
cf.\ \cite[5.5]{RainerSchindl12}. 
Moreover, we define
\begin{equation*}
  \vt^x_k := \frac{W^x_k}{W^x_{k-1}}. 
\end{equation*}  
\end{definition}

\begin{lemma}[Properties of the associated weight matrix] \label{lemma4} 
  We have:
  \begin{enumerate}
     \item Each $W^x$ is a weight sequence (in the sense of \Cref{weights}).
     \item $\vt^x \le \vt^y$ if $x \le y$, which entails $W^x \le W^y$. \label{lemma4(2)}
     \item For all $x>0$ and all $j,k \in \N$,  $W^x_{j+k} \le W^{2x}_{j} W^{2x}_{k}$ and $w^x_{j+k} \le w^{2x}_{j} w^{2x}_{k}$.
     \label{eq:mW}
     \item For all $x>0$ and all $k \in \N_{\ge 2}$, $\vt^x_{2k} \le   \vt^{4x}_{k}$. 
     \item $\A \rh>0 \E H\ge 1 \A x >0 \E C \ge 1 \A k \in \N : \rh^k W^x_k \le C W^{Hx}_k$. \label{5.10}
     \item If $\om(t) = o(t)$ as $t \to \infty$ then $(w^x_k)^{1/k} \to \infty$ and $\vt^{x}_k/k \to \infty$ for all $x>0$.
   \end{enumerate} 
\end{lemma}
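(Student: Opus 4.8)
The plan is to run everything off two representations of $W^x_k$ and the convexity of $\vh^*$. By definition $\log W^x_k=\tfrac1x\vh^*(xk)$, and unravelling $\vh^*(y)=\sup_{\tau\ge0}(y\tau-\om(e^\tau))$ via the substitution $s=e^\tau$ yields the Legendre‑dual form
\[
  W^x_k=\sup_{s\ge 1}\frac{s^k}{\exp(\tfrac1x\om(s))},\qquad k\in\N.
\]
Beyond convexity of $\vh^*$, the only inputs needed for (1)--(4) are the normalizations $\vh^*(0)=0$ and $t/\vh^*(t)\to0$; items (5) and (6) bring in \eqref{om1} and the hypothesis $\om(t)=o(t)$ respectively.

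For (1), I would note $W^x_0=e^0=1$ and $\vt^x_1=\exp(\tfrac1x\vh^*(x))\ge1$ because $\vh^*\ge\vh^*(0)=0$; log‑convexity of $W^x$ (equivalently monotonicity of $\vt^x$) is just convexity of $k\mapsto\tfrac1x\vh^*(xk)$, which is inherited from $\vh^*$; and $(W^x_k)^{1/k}=\exp(\vh^*(xk)/(xk))\to\infty$ since $t/\vh^*(t)\to0$. For (2) and (4) the key observation is that $\log\vt^x_k=\tfrac1x\big(\vh^*(xk)-\vh^*(x(k-1))\big)$ is the slope of a chord of the convex function $\vh^*$ over $[x(k-1),xk]$, so both assertions become monotonicity statements for chord slopes. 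In (2), raising $x$ to $y\ge x$ pushes this interval to the right and widens it, so the slope increases, giving $\vt^x_k\le\vt^y_k$ and hence $W^x_k=\prod_{j\le k}\vt^x_j\le\prod_{j\le k}\vt^y_j=W^y_k$. In (4), for $k\ge2$ the interval $[2xk-x,2xk]$ lies entirely to the left of $[4x(k-1),4xk]$ — the inequality $2xk\le4x(k-1)$ is exactly the condition $k\ge2$ — so the chord slopes compare the right way and yield $\vt^x_{2k}\le\vt^{4x}_k$. Statement (3) is midpoint convexity: $2\vh^*(x(j+k))\le\vh^*(2xj)+\vh^*(2xk)$ gives $W^x_{j+k}\le W^{2x}_jW^{2x}_k$ directly, and the $w$‑version then follows from this together with $(j+k)!\ge j!\,k!$.

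For (5) I would work entirely in the dual form. Writing $\rh^kW^x_k=\sup_s(\rh s)^k\exp(-\tfrac1x\om(s))$ and substituting $u=\rh s$, the estimate $\rh^kW^x_k\le CW^{Hx}_k$ reduces (the case $\rh<1$ being trivial by (2)) to producing an $H\ge1$, depending only on $\rh$, with
\[
  \om(u)\le H\,\om(u/\rh)+C'\qquad\text{for all }u,
\]
because then $\exp(-\tfrac1x\om(u/\rh))\le\exp(\tfrac{C'}{Hx})\exp(-\tfrac1{Hx}\om(u))$ term by term. Iterating \eqref{om1} in the form $\om(2t)\le L\om(t)+L$ gives $\om(2^mt)\le L^m\om(t)+L''$; choosing $m$ with $2^m\ge\rh$ and setting $H=L^m$ finishes it, the additive constant being swallowed by the $x$‑dependent constant $C$ that the quantifier order permits.

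The main obstacle is (6), where the qualitative hypothesis $\om(t)=o(t)$ must be converted into growth of the sequence. The crux is $(w^x_k)^{1/k}\to\infty$; taking logarithms and applying Stirling, this is equivalent to $\vh^*(y)/y-\log y\to\infty$ as $y\to\infty$. From the dual form I would write $\vh^*(y)/y-\log y=\sup_{s\ge1}\big(\log(s/y)-\om(s)/y\big)$, and for each fixed $v\ge1$ test $s=vy$ to get the lower bound $\log v-\om(vy)/y$; since $\om(vy)/y=v\cdot\om(vy)/(vy)\to0$, the $\liminf$ in $y$ is at least $\log v$ for every $v$, hence $+\infty$. This is the delicate point, because a single fixed choice of $s$ does not suffice — one must keep the bound uniform over the sup and only afterwards let $v\to\infty$. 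The second half of (6) then comes for free: by \Cref{lem:basicM}(1) (log‑convexity from (1)) one has $\vt^x_k\ge(W^x_k)^{1/k}$, so $\vt^x_k/k\ge(W^x_k)^{1/k}/k=(w^x_k)^{1/k}\,(k!)^{1/k}/k$, and since $(k!)^{1/k}/k\to1/e$ while $(w^x_k)^{1/k}\to\infty$, the product tends to infinity; equivalently $\vt^x_k/k=w^x_k/w^x_{k-1}$.
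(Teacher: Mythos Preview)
Your proof is correct. The paper's own proof consists almost entirely of citations to \cite{RainerSchindl12} and \cite{RainerSchindl16a}, so you have simply supplied the details that the paper outsources; the underlying arguments (convexity of $\vh^*$ for (1)--(4), the Legendre-dual representation together with \eqref{om1} for (5), and the liminf argument for (6) followed by \eqref{mucompare}) are the expected ones and match what those references do.

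One remark on your sketch of (2): the phrase ``pushes this interval to the right and widens it, so the slope increases'' is correct but deserves one more line, since the intervals $[x(k-1),xk]$ and $[y(k-1),yk]$ may overlap. The point is that for a convex function the chord slope over $[a,b]$ is dominated by that over $[a',b']$ whenever $a\le a'$ and $b\le b'$, which one sees by passing through the intermediate chord $[a',b]$ (or $[a,b']$). With that made explicit, everything stands.
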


\begin{proof} 
  (1)--(3) These are direct consequences of the properties of $\vh^*$; cf.\ \cite[5.5]{RainerSchindl12}.

  (4) \cite[Lemma 2.6]{RainerSchindl16a}.

  (5) \cite[Lemma 5.9]{RainerSchindl12}.

  (6) By \cite[Corollary 5.15]{RainerSchindl12}, we have $(w^x_k)^{1/k} \to \infty$. 
  That also $\vt^x_k/k \to \infty$ follows from \eqref{mucompare}. 
\end{proof}

\begin{theorem}[{\cite[Corollaries 5.8 and 5.15]{RainerSchindl12}}] \label{representation}
  Let $\om$ be a weight function and let $\fW = \{W^x\}_{x>0}$ be the associated weight matrix. 
  Then, as locally convex spaces,
  \begin{align} \label{eq:repR}
    \cB^{\{\om\}}(\R^n) &= \on{ind}_{x > 0} \cB^{\{W^x\}}(\R^n) = \on{ind}_{x > 0} \on{ind}_{\rh>0} \cB^{W^x}_\rh(\R^n).  
  \end{align}
  We have $\cB^{\{\om\}}(\R^n) = \cB^{\{W^x\}}(\R^n)$ for all $x>0$ if and only if 
  \begin{align}
     \E H\ge 1 \A t\ge 0 : 
      2\om(t) \le \om(Ht) + H.   \label{om6}
  \end{align} 
  Moreover, \eqref{om6} holds if and only if some (equivalently each) $W^x$ has moderate growth. 
  It is no restriction to let the inductive limits in \eqref{eq:repR} range only over $x, \rh \in \N$. 
\end{theorem}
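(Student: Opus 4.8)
The plan is to reduce the whole statement to one isometric identification of the Banach building blocks and then transport it through the inductive limits. The linchpin is that, by definition of the weight matrix, $W^\rh_{|\al|}=\exp(\tfrac1\rh\vh^*(\rh|\al|))$, so the weight in $\|\cdot\|^\om_\rh$ is exactly $1/W^\rh_{|\al|}$ and hence $\cB^\om_\rh=\cB^{W^\rh}_1$ with identical norms. For the topological identity \eqref{eq:repR} I would then exhibit two interleaving families of continuous inclusions. On the one hand $\cB^\om_\rh=\cB^{W^\rh}_1\hookrightarrow\cB^{W^\rh}_\rh\hookrightarrow\cB^{\{W^\rh\}}(\R^n)$ embeds each step of the $\om$-limit into the double limit. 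On the other hand, to come back I invoke part (5) of \Cref{lemma4}: given $\rh$ there is $H\ge1$ with $\rh^kW^x_k\le C\,W^{Hx}_k$ for all $x,k$, whence $\cB^{W^x}_\rh\hookrightarrow\cB^{W^{Hx}}_1=\cB^\om_{Hx}$. As all these maps are continuous and the index sets are directed, the two spectra are mutually cofinal with continuous linking maps, so the identity maps are topological isomorphisms, not merely bijections.

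For the second assertion I reduce the inductive limit to a single step. By the criterion $\cB^{\{M\}}\subseteq\cB^{\{N\}}\iff M_k^{1/k}\lesssim N_k^{1/k}$ recalled above, $\cB^{\{\om\}}=\cB^{\{W^{x_0}\}}$ (for some, equivalently every, $x_0$ by the monotonicity in part (2) of \Cref{lemma4}) holds exactly when the $(W^x_k)^{1/k}$ are pairwise comparable across $x$; since $W^x\le W^y$ for $x\le y$, this amounts to the single comparison $W^{2x}_k\lesssim C^kW^x_k$. Using the elementary identity $W^x_{2k}=(W^{2x}_k)^2$ together with the mixed inequality $W^x_{j+k}\le W^{2x}_jW^{2x}_k$ of part (3) of \Cref{lemma4}, I would show that this comparison is in turn equivalent to the moderate-growth inequality $W^x_{j+k}\le C^{j+k}W^x_jW^x_k$, which settles the equivalence of \eqref{om6} with moderate growth once \eqref{om6} itself is matched.

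That last matching is the computational heart, and the step I expect to be the main obstacle. I translate through the Young conjugate: rewriting \eqref{om6} as $2\vh(s)\le\vh(s+L)+H$ with $L=\log H$ and applying the order-reversing conjugation, with $(2\vh)^*(t)=2\vh^*(t/2)$ and $(\vh(\cdot+L)+H)^*(t)=\vh^*(t)-Lt-H$, converts it into $\vh^*(t)\le2\vh^*(t/2)+Lt+H$. On the other side, the diagonal case $j=k$ of moderate growth reads $\vh^*(2xk)\le2\vh^*(xk)+2xk\log C$, i.e.\ the same inequality on the lattice $t\in2x\N$. Reconciling the continuous and discrete forms, and handing the full submultiplicative inequality back from its diagonal case, relies on convexity of $\vh^*$ and on \eqref{om1}; carefully tracking the linear and constant error terms through these passages is the delicate bookkeeping. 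Finally, monotonicity in both parameters produces countable cofinal subfamilies, so the limits in \eqref{eq:repR} may be restricted to $x,\rh\in\N$.
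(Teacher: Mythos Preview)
The paper does not give its own proof of this theorem: it is imported verbatim from \cite[Corollaries 5.8 and 5.15]{RainerSchindl12}, and no argument is reproduced here. So there is nothing in the present paper to compare against; I can only assess your outline on its merits.

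Your plan is the natural one, and the key identification $\cB^\om_\rh=\cB^{W^\rh}_1$ (isometrically) is exactly the right linchpin. The cofinality argument for \eqref{eq:repR} via \Cref{lemma4}(5) is correct and is indeed how one proves this in \cite{RainerSchindl12}. The reduction of $\cB^{\{\om\}}=\cB^{\{W^x\}}$ to a two-parameter comparability of $(W^x_k)^{1/k}$ is also right, and your identity $W^x_{2k}=(W^{2x}_k)^2$ together with \Cref{lemma4}(3) is a clean way to link this with moderate growth.

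The one place I would press you is the step you yourself flag as delicate: recovering \eqref{om6} from moderate growth of a single $W^x$. Your Young-conjugate computation is correct and shows that \eqref{om6} is equivalent to the continuous inequality $\vh^*(t)\le 2\vh^*(t/2)+Lt+H$ for all $t\ge 0$. Moderate growth of $W^x$, even in the full form $W^x_{j+k}\le C^{j+k}W^x_jW^x_k$, only yields this on the lattice $t\in x\N$ (via the diagonal $j=k$). Extending from the lattice to all $t$ is not automatic from convexity alone: convexity controls $\vh^*(t)$ from above by its value at the next lattice point, but you simultaneously need a lower bound on $\vh^*(t/2)$, and those two bounds pull in opposite directions. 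One clean way through is to observe that moderate growth of $W^x$ forces moderate growth of $W^{x/2}$ (use \Cref{lemma4}(3) in the form $W^{x/2}_{j+k}\le W^x_jW^x_k$ together with the reverse comparison), hence of $W^{x/2^n}$ for all $n$, so the lattice on which the $\vh^*$-inequality holds can be made arbitrarily fine; then convexity and continuity of $\vh^*$ finish it. Alternatively one can argue directly with $\om$ and the counting function, as in \cite{RainerSchindl12}. Either way this step deserves more than a sentence.
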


\begin{remark}
  Let us emphasize that the fact that $\cB^{\{\om\}} = \cB^{\{M\}}$ for some weight sequence $M$ 
  if and only if $\om$ satisfies \eqref{om6} is due to \cite{BMM07}.
\end{remark}

\subsection{Whitney ultrajets}

Let $E$ be a compact subset of $\R^n$. 
We denote by $\cJ^\infty(E)$ the vector space of all jets $F= (F^\al)_{\al\in \N^n} \in C^0(E,\R)^{\N^n}$ on $E$. 
For $a \in E$ and $p \in \N$ we associate the Taylor polynomial
\begin{gather*}
  T^p_a : \cJ^\infty(E) \to C^\infty (\R^n,\R), ~ F \mapsto T^p_a F(x) := \sum_{|\al|\le p} \frac{(x-a)^\al}{\al!} F^\al(a),    
\end{gather*}
and the remainder $R^p_a F = ((R^p_a F)^\al)_{|\al| \le p}$ with
\begin{gather*}
  (R^p_a F)^\al (x) := F^\al(x) - \sum_{|\be| \le p-|\al|} \frac{(x-a)^\be}{\be!} F^{\al+\be}(a), \quad a,x \in E.      
\end{gather*}
Let us denote by 
$j^\infty_E$ the mapping which assigns to a $C^\infty$-function $f$ on $\R^n$
the jet $j^\infty_E(f) := (\p^\al f|_E)_\al$. 
By Taylor's formula, $F  =j^\infty_E(f)$ satisfies
\begin{equation*}\label{Whitneyjets}
  (R^p_a F)^\al (x) = o(|x-a|^{p-|\al|}) \quad \text{ for $a,x \in E$, $p\in \N$, $|\al| \le p$ as $|x-a|\to 0$.}
\end{equation*}
Conversely, if a jet $F \in \cJ^\infty(E)$ has this property, then it admits a 
$C^\infty$-extension to $\R^n$,
by Whitney's extension theorem \cite{Whitney34a} (for modern accounts see e.g.\ 
\cite[Ch.~1]{Malgrange67}, \cite[IV.3]{Tougeron72}, or 
\cite[Theorem 2.3.6]{Hoermander83I}).

\begin{definition}[Whitney ultrajets] \label{def:ultrajets}
Let $E \subseteq \R^n$ be compact.
Let $M=(M_k)$ be a weight sequence.
For fixed $\rh>0$ we denote by $\cB^{M}_\rh(E)$ the set of all jets $F$ such that  
there exists $C>0$ with 
\begin{gather*}
  |F^\al(a)| \le C \rh^{|\al|} \,  M_{|\al|}, \quad \al \in \N^n,~ a \in E,
  \\
  |(R^p_a F)^\al(b)| \le C \rh^{p+1} \, M_{p+1}\,  \frac{|b-a|^{p+1-|\al|}}{(p+1-|\al|)!}, \quad p \in \N,\, |\al| \le p,~ a,b \in E.  
\end{gather*}
The smallest constant $C$ defines a complete norm on $\cB^{M}_\rh(E)$. We define 
$$\cB^{\{M\}}(E) := \on{ind}_{\rh \in \N} \cB^{M}_\rh(E).$$
An element of $\cB^{\{M\}}(E)$ is called a \emph{Whitney ultrajet of class $\cB^{\{M\}}$ on $E$}.

Let $\om$ be a weight function and $\fW = \{W^x\}_{x>0}$ the associated weight matrix.
A jet $F$ is said to be a \emph{Whitney ultrajet of class $\cB^{\{\om\}}$ on $E$}  
if $F \in \cB^{\{W^x\}}(E)$ 
for some $x>0$; we set 
$$\cB^{\{\om\}}(E)= \cB^{\{\fW\}}(E)= \on{ind}_{x >0} \cB^{\{W^x\}}(E) = \on{ind}_{x>0} \on{ind}_{\rh>0} \cB^{W^x}_\rh(E).$$ 
\end{definition}

\begin{remark}
  This definition of Whitney ultrajet of class $\cB^{\{\om\}}$ on $E$ coincides with the one given in 
  \cite{BBMT91}. This follows from \Cref{lemma4}\eqref{5.10}.
\end{remark}

\subsection{Notation for sequences}
\hfill
\smallskip

\begin{minipage}{0.6\textwidth}
	The table summarizes our notation for sequences appearing in the paper. 
	The three columns are mutually determined by the rule
	\[		
		\mu_0 \mu_1 \mu_2 \cdots \mu_k = M_k = k!\, m_k
	\]
	for $k \in \N$.
	(There will be no confusion by the fact that $\si$ usually denotes a weight function.)
\end{minipage}
\hspace{12mm}
\begin{minipage}{0.3\textwidth}
\begin{tabular}[h]{|c|c|c|}
	$M$ & $m$ & $\mu$ \\
	$N$ & $n$ & $\nu$ \\
	$L$ & $\ell$ & $\la$ \\
	$W^x$ & $w^x$ & $\vt^x$ \\
	$S$ & $s$ & $\si$ \\
	$\dot S$ & $\dot s$ & $\dot \si$
\end{tabular}	
\end{minipage}

\section{More on weight functions and weight sequences} \label{sec:moreonweights}

\subsection{Functions associated with weight sequences} \label{hGaSi}

There are a few functions which one naturally associates with a weight sequence; cf.\ 
\cite{Mandelbrojt52}, \cite{Komatsu73}, \cite{ChaumatChollet94}. They will play an essential role in the proof 
of the extension theorem \ref{main}.

\begin{definition}[Associated functions]
Let $m =(m_k)$ be a positive sequence satisfying $m_0 = 1$ and $m_k^{1/k} \to \infty$ (not necessarily log-convex). 
We associate the following functions  
\begin{align} \label{h}
  h_m(t) &:= \inf_{k \in \N} m_k t^k, \quad t > 0, \quad h_m(0):=0, \\
  \label{counting2}
  \ol \Ga_m(t) &:= \min\{k : h_m(t) =  m_k t^k\}, \quad t > 0,  \\
\intertext{and, provided that $m_{k+1}/m_{k} \to \infty$,}
\ul \Ga_m (t) &:=  \min\Big\{k : \frac{m_{k+1}}{m_k}  \ge \frac{1}{t} \Big\}, \quad t > 0.
\end{align}
\end{definition}

\begin{lemma} \label{basic}
Let $m =(m_k)$ be a positive sequence satisfying $m_0 = 1$, $m_k^{1/k} \to \infty$, and $m_{k+1}/m_k \to \infty$. Then: 
\begin{enumerate}
  \item $h_m$ is increasing, continuous, and positive for $t>0$. For large $t$ we have $h_m(t) = 1$.
  \item $\ul \Ga_m$ is decreasing 
  and $\ul \Ga_m(t) \to \infty$ as $t\to 0$.
  \item $k \mapsto m_k t^k$ is decreasing for $k \le \ul \Ga_m(t)$. \label{eq:ulGa3}
  \item $m_{k+1}/m_k  \le n_{k+1}/n_k$ for all $k$ implies $\ul \Ga_n \le \ul \Ga_m$.
  \item $\ul \Ga_m  
  \le \ol \Ga_m$. If $m$ is log-convex then $\ul \Ga_m = \ol \Ga_m$.
\end{enumerate}  
\end{lemma}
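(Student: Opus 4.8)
The plan is to base everything on one preliminary observation: since $m_k^{1/k}\to\infty$, for each fixed $t>0$ we have $m_kt^k=(m_k^{1/k}t)^k\to\infty$ as $k\to\infty$, so the infimum defining $h_m(t)$ is actually a minimum attained at some finite index. In particular $h_m(t)>0$ for $t>0$, while trivially $h_m(t)\le m_0t^0=1$. For (1), monotonicity is immediate: each $t\mapsto m_kt^k$ is non-decreasing on $(0,\infty)$, so $h_m(t_1)\le m_kt_1^k\le m_kt_2^k$ for every $k$ whenever $t_1\le t_2$, and taking the infimum on the right gives $h_m(t_1)\le h_m(t_2)$. For the value at large $t$, I would note that $m_kt^k\ge1$ for every $k\ge1$ as soon as $t\ge\sup_{k\ge1}m_k^{-1/k}$, and this supremum is finite because $m_k^{-1/k}\to0$; hence the minimum is realized at $k=0$ and $h_m(t)=1$. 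Continuity is the one place needing a little care: on a compact interval $[a,b]\subset(0,\infty)$ the bound $m_kt^k\ge(m_k^{1/k}a)^k$ together with $m_k^{1/k}\to\infty$ shows the minimum is attained among indices $k\le K$ for a fixed $K=K(a,b)$, so $h_m$ agrees on $[a,b]$ with a finite minimum of continuous functions and is continuous; continuity at $0$ follows from $0<h_m(t)\le m_1t\to0$.

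Assertions (2), (3), (4) are all bookkeeping about the defining set of $\ul\Ga_m$. For (2), enlarging $t$ decreases $1/t$ and hence enlarges $\{k:m_{k+1}/m_k\ge 1/t\}$, so its minimum cannot increase; thus $\ul\Ga_m$ is non-increasing. That $\ul\Ga_m(t)\to\infty$ as $t\to0$ holds because $\ul\Ga_m(t)\ge K$ is equivalent to $m_{k+1}/m_k<1/t$ for all $k<K$, i.e.\ to $t<\min_{0\le k<K}m_k/m_{k+1}$, a positive threshold, so letting $t\to0$ pushes the index beyond every $K$. For (3), put $\kappa=\ul\Ga_m(t)$; by minimality of $\kappa$ every $k<\kappa$ satisfies $m_{k+1}/m_k<1/t$, which rearranges to $m_{k+1}t^{k+1}<m_kt^k$, so $k\mapsto m_kt^k$ is strictly decreasing on $\{0,\dots,\kappa\}$. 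For (4), the hypothesis $m_{k+1}/m_k\le n_{k+1}/n_k$ yields the inclusion $\{k:m_{k+1}/m_k\ge 1/t\}\subseteq\{k:n_{k+1}/n_k\ge 1/t\}$, and taking minima gives $\ul\Ga_n(t)\le\ul\Ga_m(t)$.

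For (5), set $\kappa=\ul\Ga_m(t)$ and $\lambda=\ol\Ga_m(t)$. The inequality $\ul\Ga_m\le\ol\Ga_m$ is a direct consequence of (3): since $m_kt^k$ strictly decreases on $\{0,\dots,\kappa\}$, every $k<\kappa$ has $m_kt^k>m_\kappa t^\kappa\ge h_m(t)$, so no such $k$ realizes the minimum and the least index $\lambda$ that does satisfies $\lambda\ge\kappa$. For equality under log-convexity, recall that log-convexity of $m$ means $k\mapsto m_{k+1}/m_k$ is non-decreasing. Then for every $k\ge\kappa$ one has $m_{k+1}/m_k\ge m_{\kappa+1}/m_\kappa\ge 1/t$, i.e.\ $m_{k+1}t^{k+1}\ge m_kt^k$, so $k\mapsto m_kt^k$ is non-decreasing on $\{\kappa,\kappa+1,\dots\}$. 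Combined with the strict decrease on $\{0,\dots,\kappa\}$, the sequence is unimodal with its minimum first attained exactly at $\kappa$; hence $h_m(t)=m_\kappa t^\kappa$ and $\ol\Ga_m(t)=\kappa=\ul\Ga_m(t)$.

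The computations are elementary, as in the analogous \Cref{lem:basicM}; the only genuinely delicate steps I anticipate are the continuity claim in (1), where one must first reduce the infimum to a locally finite minimum before invoking continuity of the individual $m_kt^k$, and the unimodality argument in (5), where log-convexity is precisely the hypothesis that upgrades the two one-sided monotonicities into the identity $\ol\Ga_m=\ul\Ga_m$. Everything else reduces to monotonicity of the defining index sets.
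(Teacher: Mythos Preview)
Your proposal is correct. The paper itself does not supply a proof, stating only that ``these facts are well-known and immediate from the definitions'' with references to Mandelbrojt, Komatsu, and Chaumat--Chollet; your argument fills in precisely the elementary verifications the paper omits, and every step is sound.
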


\begin{proof}
  These facts are well-known and immediate from the definitions; we refer to \cite{Mandelbrojt52}, \cite{Komatsu73}, 
  and \cite{ChaumatChollet94}.
\end{proof}

Let $M$ be a weight sequence satisfying $m_k^{1/k} \to \infty$. 
Then $m_{k}/m_{k-1} = \mu_k/k \to \infty$, in fact, 
we have $(k!\, m_k)^{1/k} = M_k^{1/k} \le \mu_k$, by \eqref{mucompare}.

So for such $M$ the functions $h_m$, $\ul \Ga_m$, $\ol \Ga_m$   
are well-defined and enjoy the properties listed 
in \Cref{basic}.  
The sequence $m$ \emph{will not be log-convex} in general, whence $\ul \Ga_m$ and $\ol \Ga_m$ fall apart. 
We need them both. 
It will crucial to be able to compare them, which is the content of the following lemma.
Of course, we pay the price that we must switch from $m$ to another sequence $n$.

\begin{lemma} \label{lem:m1}
  Let $M$, $N$ be weight sequences satisfying $m_k^{1/k} \to \infty$ and $n_k^{1/k} \to \infty$.
  Assume that there exists $C\ge 1$ such that
  $\mu_j/j \le C \nu_k/k$ for all $j \le k$.
  Then, for all $t >0$,
  \begin{equation} \label{eq:compare} 
    \ol \Ga_n (Ct) \le \ul \Ga_m(t). 
  \end{equation}
\end{lemma}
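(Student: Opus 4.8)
The plan is to translate the hypothesis into a statement about the ``factorial-free'' ratios $m_k/m_{k-1} = \mu_k/k$ and $n_k/n_{k-1} = \nu_k/k$, and then to show directly that the sequence $k \mapsto n_k(Ct)^k$, whose smallest minimizer defines $\ol\Ga_n(Ct)$, is non-decreasing past the index $\ul\Ga_m(t)$.

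First I would fix $t>0$ and set $p := \ul\Ga_m(t)$. By the definition of $\ul\Ga_m$, the index $p$ is the smallest $k$ with $m_{k+1}/m_k \ge 1/t$; in particular $m_{p+1}/m_p \ge 1/t$. Rewriting $m_{p+1}/m_p = \mu_{p+1}/(p+1)$, this reads $\mu_{p+1}/(p+1) \ge 1/t$. This isolates the correct index $j = p+1$ to feed into the hypothesis.

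Next, for every $k \ge p+1$ I would apply the hypothesis $\mu_j/j \le C\nu_k/k$ with $j = p+1 \le k$, obtaining $\mu_{p+1}/(p+1) \le C\,\nu_k/k$, i.e. $m_{p+1}/m_p \le C\, n_k/n_{k-1}$. Combining this with $m_{p+1}/m_p \ge 1/t$ yields $n_k/n_{k-1} \ge 1/(Ct)$ for all $k \ge p+1$. Equivalently $Ct \cdot n_k/n_{k-1} \ge 1$, which says precisely that the ratio of consecutive terms of $k \mapsto n_k(Ct)^k$ is at least $1$ for $k \ge p+1$. Hence this sequence is non-decreasing for $k \ge p$, and by telescoping $n_k(Ct)^k \ge n_p(Ct)^p$ for all $k \ge p$.

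Finally I would conclude: since $n_k(Ct)^k \ge n_p(Ct)^p$ for every $k \ge p$, the infimum $h_n(Ct) = \inf_k n_k(Ct)^k$ is already attained at some index $\le p$, so its smallest minimizer satisfies $\ol\Ga_n(Ct) \le p = \ul\Ga_m(t)$, which is \eqref{eq:compare}. The only genuinely delicate point—the ``main obstacle''—is the bookkeeping: one must pass carefully between the with-factorial quantities $\mu,\nu$ appearing in the hypothesis and the without-factorial ratios $m_k/m_{k-1},\, n_k/n_{k-1}$ governing the two counting functions, and, crucially, recognize that the breakpoint index $j=p+1$ of $\ul\Ga_m(t)$ is exactly the index at which the hypothesis supplies the lower bound on $n_k/n_{k-1}$ needed to control $\ol\Ga_n$. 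Once that index is identified, the remainder is a one-line telescoping estimate.
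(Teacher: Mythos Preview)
Your proof is correct and follows essentially the same approach as the paper: both set $p=\ul\Ga_m(t)$, use the defining inequality $\mu_{p+1}/(p+1)\ge 1/t$ together with the hypothesis (with $j=p+1$) to bound each ratio $\nu_k/k$ from below for $k\ge p+1$, and telescope to conclude $n_k(Ct)^k\ge n_p(Ct)^p$. The paper just compresses the telescoping into a single product inequality rather than checking consecutive ratios.
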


\begin{proof}
	Let $t >0$.				
  If $k > \ul \Ga_m(t)$, then
  \begin{align*}
    \frac{n_k}{n_{\ul \Ga_m(t)}} = \frac{\nu_k}{k} \cdots \frac{\nu_{\ul \Ga_m(t)+1}}{\ul \Ga_m(t)+1} 
    \ge \Big(\frac{\mu_{\ul \Ga_m(t)+1}}{C (\ul \Ga_m(t)+1) }\Big)^{k - \ul \Ga_m(t)} \ge (Ct)^{-k + \ul \Ga_m(t)}
  \end{align*}
  and thus $n_k (Ct)^k \ge n_{\ul \Ga_m(t)} (Ct)^{\ul \Ga_m(t)}$. 
  It follows that $\ol \Ga_n (Ct) \le \ul \Ga_m(t)$. 
\end{proof}

We also need the following property.

\begin{lemma} \label{lem:m2} 
  Let $M$, $N$, $L$ be weight sequences satisfying $m_k^{1/k} \to \infty$, $n_k^{1/k} \to \infty$, 
  and $\ell_k^{1/k} \to \infty$. Assume that  
  \begin{equation} \label{eq:m1}
    \mu_{2k} \lesssim \nu_k
  \end{equation}
  and 
  \begin{equation} \label{eq:m2}
    \E C \ge 1 \A 1 \le j \le k  : \frac{\nu_j}{j}  \le C \frac{\la_k}{k}.
  \end{equation}
  Then 
  \begin{equation} \label{eq:m3}
    \E D \ge 1 \A t >0 : 2 \ul \Ga_\ell(Dt) \le \ul \Ga_m(t).
  \end{equation}
\end{lemma}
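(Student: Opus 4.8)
The plan is to reduce the statement to a single comparison of the defining quotients of the counting functions and then run a direct index‑chasing argument. Recall that for any weight sequence one has $m_{k+1}/m_k=\mu_{k+1}/(k+1)$ (and likewise $\ell_{k+1}/\ell_k=\la_{k+1}/(k+1)$), so that
\[
  \ul\Ga_m(t)=\min\Big\{k:\frac{\mu_{k+1}}{k+1}\ge\frac1t\Big\},\qquad
  \ul\Ga_\ell(s)=\min\Big\{k:\frac{\la_{k+1}}{k+1}\ge\frac1s\Big\};
\]
both are well defined because the quotients tend to $\infty$ (the paragraph following \Cref{basic}, where $m_k^{1/k}\to\infty$ is shown to imply $\mu_k/k\to\infty$). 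Writing $K:=\ul\Ga_m(t)$, the target inequality \eqref{eq:m3} is equivalent to producing a constant $D\ge1$, independent of $t$, with $\ul\Ga_\ell(Dt)\le\lfloor K/2\rfloor$.

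First I would extract the information carried by $K$: by minimality, $\mu_{K+1}/(K+1)\ge 1/t$, i.e.\ $\mu_{K+1}\ge(K+1)/t$. The decisive step is to transport this lower bound from index $K+1$ down to an index near $K/2$ using the doubling hypothesis \eqref{eq:m1}. Set $k_0:=\lfloor K/2\rfloor+1$, chosen precisely so that $2k_0\ge K+1$; then monotonicity of $\mu$ and \eqref{eq:m1} give
\[
  C_1\,\nu_{k_0}\ge \mu_{2k_0}\ge \mu_{K+1}\ge \frac{K+1}{t},
\]
where $C_1$ is the constant in \eqref{eq:m1}. Since $k_0\le(K+2)/2$, a one‑line check shows $(K+1)/k_0\ge1$ for every $K\ge0$, whence $\nu_{k_0}/k_0\ge 1/(C_1 t)$.

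Next I would feed this into \eqref{eq:m2} with $j=k=k_0$, obtaining $\nu_{k_0}/k_0\le C\,\la_{k_0}/k_0$ and hence $\la_{k_0}/k_0\ge 1/(CC_1 t)$. The point is that $\la_{k_0}/k_0=\ell_{k_0}/\ell_{k_0-1}$ is exactly the quotient indexed by $k_0-1$ in the definition of $\ul\Ga_\ell$. Therefore, setting $D:=CC_1$, the threshold $1/(Dt)$ is met already at index $k_0-1$, so
\[
  \ul\Ga_\ell(Dt)\le k_0-1=\Big\lfloor\frac K2\Big\rfloor,
\]
and consequently $2\,\ul\Ga_\ell(Dt)\le K=\ul\Ga_m(t)$, which is \eqref{eq:m3}.

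I expect the only real care to lie in the bookkeeping rather than in any analytic estimate: one must keep straight the index shift between the ratio $\nu_k/k$ that appears in the hypotheses and the quotient $\la_{k+1}/(k+1)$ defining $\ul\Ga_\ell$ (the identity $\la_{k_0}/k_0=\ell_{k_0}/\ell_{k_0-1}$ is what makes the argument close), choose $k_0$ so that simultaneously $2k_0\ge K+1$ and $k_0-1\le\lfloor K/2\rfloor$, and verify that $(K+1)/k_0$ is bounded below by a positive constant uniformly in $K$ (the small values $K=0,1$ being the tightest) so that $D$ is genuinely independent of $t$. Everything else is monotonicity of $\mu$ together with the two structural hypotheses \eqref{eq:m1} and \eqref{eq:m2}.
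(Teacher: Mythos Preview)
Your proof is correct and follows essentially the same route as the paper: combine \eqref{eq:m1}, monotonicity of $\mu$, and \eqref{eq:m2} to bound $\la_{k_0}/k_0$ from below, then read off the counting-function inequality from the definition of $\ul\Ga_\ell$. The paper first packages \eqref{eq:m1} and \eqref{eq:m2} into the intermediate estimate $\frac{\mu_h}{h}\le C\frac{\la_k}{k}$ for all $1\le h\le 2k$ and then deduces \eqref{eq:m3}, whereas you go directly; as a minor bonus, your argument shows that only the diagonal case $j=k$ of \eqref{eq:m2} (i.e., $\nu_k\lesssim\la_k$) is actually used.
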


\begin{proof}
  We first claim that \eqref{eq:m1} and \eqref{eq:m2} imply 
  \begin{equation} \label{eq:m4}
    \E C \ge 1 \A 1 \le h \le 2k  : \frac{\mu_h}{h} \le C \frac{\la_k}{k}.
  \end{equation}
  Note that \eqref{eq:m1} is equivalent to $\frac{\mu_{2k}}{2k} \lesssim \frac{\nu_k}{k}$.
  Thus, if $h = 2j$ for  $1 \le j \le k$, then $\frac{\mu_{h}}{h} = \frac{\mu_{2j}}{2j} \le C \frac{\la_k}{k}$. 
  If $h$ is odd, then $\frac{\mu_h}{h} \le 2 \frac{\mu_{h+1}}{h+1} \le 2C \frac{\la_k}{k}$, since $\mu$ is increasing. 

  Now it is easy to see that \eqref{eq:m4} implies \eqref{eq:m3}.
\end{proof}

\subsection{Good weight functions}

Let us single out the weight functions whose associated weight matrix 
satisfies the conditions required in \Cref{lem:m1} and \Cref{lem:m2}.

\begin{definition}[Good weight functions] \label{def:good}
  A weight function $\om$ with associated weight matrix $\fW = \{W^x\}_{x >0}$ is called \emph{good} if
  \begin{equation} \label{eq:good}
    \A x >0 \E y>0 \E C \ge 1 \A 1\le j \le k : \frac{\vt^{x}_j}{j} \le C\, \frac{\vt^{y}_k}{k}. 
   \end{equation} 
\end{definition}

\begin{remark} \label{rem:good}
  (1) By \Cref{lemma4}\eqref{lemma4(2)}, it is no restriction to assume $y \ge 2x$ in \eqref{eq:good} 
  to the benefit that $w^x_{j+k} \le w^{y}_j w^{y}_k$ for all $j,k$, by \Cref{lemma4}\eqref{eq:mW}. 

  (2) We remark that \eqref{eq:good} is not invariant under equivalence of weight functions.  

  (3) If $W^x$ is strongly log-convex, then \eqref{eq:good} is satisfied with $y = x$ and $C =1$. 
\end{remark}

\begin{proposition} \label{cor:essence}
  Let $\om$ be a good weight function satisfying $\om(t) = o(t)$ as $t \to \infty$.
  Let $\fW = \{W^x\}_{x>0}$ be the associated weight matrix.
  Then 
  \begin{align} \label{eq:essence1}
    \A x >0 \E &y_3 \ge y_2 \ge y_1 \ge x \E D\ge 1 \A t>0 : 
    \notag \\
    &
     \ol \Ga_{w^{y_3}}(D^3t)\le \ul \Ga_{w^{y_2}}(D^2t)  
     \le \ol \Ga_{w^{y_2}}(D^2t) 
    \le \ul \Ga_{w^{y_1}}(Dt) \le \frac{\ul \Ga_{{w^x}}(t)}{2}.
  \end{align}
  We may assume that $y_1 \ge 2x$ and $y_2 \ge 2y_1$ and hence $w^x_{j+k} \le w^{y_1}_j w^{y_1}_k$ and 
  $w^{y_1}_{j+k} \le w^{y_2}_j w^{y_2}_k$ for all $j,k \in \N$. 
\end{proposition}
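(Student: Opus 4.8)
The plan is to split the four-term chain \eqref{eq:essence1} into its three individual inequalities and to prove each by a single application of \Cref{lem:m1} or \Cref{lem:m2}, after fixing the parameters $y_1 \le y_2 \le y_3$ by three successive uses of goodness \eqref{eq:good}. Two preliminary observations make the lemmas applicable. First, since $\om(t) = o(t)$, \Cref{lemma4}(6) gives $(w^z_k)^{1/k} \to \infty$ for every $z > 0$, so each $w^z$ is a sequence to which \Cref{lem:m1} and \Cref{lem:m2} apply and for which $\ul \Ga_{w^z}$, $\ol \Ga_{w^z}$ are defined. Second, for each of our sequences one has $w^z_k = W^z_k/k!$ and hence $w^z_k/w^z_{k-1} = \vt^z_k/k$; thus the hypotheses of \Cref{lem:m1}, \Cref{lem:m2}, which are stated through the ratios $\mu_j/j$, $\nu_k/k$, $\la_k/k$, translate verbatim into conditions on the quantities $\vt^z_j/j$ — exactly the objects controlled by \eqref{eq:good}. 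I shall also use that $\ul \Ga_{w^z}$ and $\ol \Ga_{w^z}$ are non-increasing in $t$ (for $\ul \Ga$ this is \Cref{basic}(2); for $\ol \Ga$ it is immediate from the fact that $h_{w^z}$ is the lower envelope of the monomials $w^z_k t^k$, whose minimizing index does not increase with $t$).

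Fix $x > 0$. I choose the parameters by applying \eqref{eq:good} three times, each time invoking \Cref{rem:good}(1) and \Cref{lemma4}(2) to enlarge the output weight. Applying \eqref{eq:good} to $4x$ yields $y_1 \ge 8x$ and $C_0 \ge 1$ with $\vt^{4x}_j/j \le C_0\, \vt^{y_1}_k/k$ for $1 \le j \le k$; applying it to $y_1$ yields $y_2 \ge 2y_1$ and $C_2 \ge 1$ with $\vt^{y_1}_j/j \le C_2\, \vt^{y_2}_k/k$; and applying it to $y_2$ yields $y_3 \ge y_2$ and $C_1 \ge 1$ with $\vt^{y_2}_j/j \le C_1\, \vt^{y_3}_k/k$. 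This produces $y_3 \ge y_2 \ge y_1 \ge x$ together with the required inequalities $y_1 \ge 2x$ and $y_2 \ge 2y_1$; the two submultiplicativity statements $w^x_{j+k} \le w^{y_1}_j w^{y_1}_k$ and $w^{y_1}_{j+k} \le w^{y_2}_j w^{y_2}_k$ then follow from \Cref{lemma4}(3) combined with the monotonicity \Cref{lemma4}(2).

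Now I establish the three comparisons. For the rightmost inequality I apply \Cref{lem:m2} with $(M, N, L) = (W^x, W^{4x}, W^{y_1})$: hypothesis \eqref{eq:m1}, namely $\vt^x_{2k} \lesssim \vt^{4x}_k$, holds by \Cref{lemma4}(4), while hypothesis \eqref{eq:m2}, namely $\vt^{4x}_j/j \le C_0\, \vt^{y_1}_k/k$ for $j \le k$, is the first use of goodness; the lemma returns a constant $D_0 \ge 1$ with $2\, \ul \Ga_{w^{y_1}}(D_0 t) \le \ul \Ga_{w^x}(t)$ for all $t > 0$. For the third inequality I apply \Cref{lem:m1} with $(M,N) = (W^{y_1}, W^{y_2})$, whose hypothesis is the second use of goodness, obtaining $\ol \Ga_{w^{y_2}}(C_2 s) \le \ul \Ga_{w^{y_1}}(s)$ for all $s > 0$. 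For the outermost left inequality I apply \Cref{lem:m1} with $(M,N) = (W^{y_2}, W^{y_3})$, whose hypothesis is the third use of goodness, obtaining $\ol \Ga_{w^{y_3}}(C_1 s) \le \ul \Ga_{w^{y_2}}(s)$ for all $s > 0$. The middle inequality $\ul \Ga_{w^{y_2}} \le \ol \Ga_{w^{y_2}}$ is \Cref{basic}(5).

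Finally set $D := \max\{C_1, C_2, D_0, 1\}$. Because $\ul \Ga$ and $\ol \Ga$ are non-increasing, replacing the constants $C_1, C_2, D_0$ by the larger $D$ only decreases the left-hand side of each of the three estimates, so they persist as $\ol \Ga_{w^{y_3}}(D s) \le \ul \Ga_{w^{y_2}}(s)$, $\ol \Ga_{w^{y_2}}(D s) \le \ul \Ga_{w^{y_1}}(s)$ (both for all $s$), and $2\, \ul \Ga_{w^{y_1}}(D t) \le \ul \Ga_{w^x}(t)$. Substituting $s = D^2 t$ in the first and $s = D t$ in the second, and inserting \Cref{basic}(5) in between, yields precisely \eqref{eq:essence1}. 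The only genuine difficulty is organizational: one must route the correct matrix sequence into the role of $M$, $N$, $L$ in each lemma — in particular recognizing that the auxiliary index $4x$ (via \Cref{lemma4}(4)) is what allows \eqref{eq:m1} to be met — and then verify that a single constant $D$ serves all three comparisons, which the monotonicity of $\ul \Ga$ and $\ol \Ga$ guarantees.
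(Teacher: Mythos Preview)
Your proof is correct and follows essentially the same approach as the paper: the rightmost inequality via \Cref{lem:m2} (using $\vt^x_{2k}\le\vt^{4x}_k$ from \Cref{lemma4}(4)), the two outer comparisons via \Cref{lem:m1}, the middle one via \Cref{basic}(5), and the supplement via \Cref{rem:good}(1). You simply spell out in full the organizational bookkeeping (the three successive applications of goodness, the unification into a single constant $D$ via monotonicity of $\ul\Ga$ and $\ol\Ga$) that the paper leaves to the reader.
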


\begin{proof}
  The rightmost inequality in \eqref{eq:essence1}
  follows from \Cref{lem:m2}, since 
  $\vt^x_{2k} \le \vt^{4x}_{k}$ (for $k \ge 2$) by \Cref{lemma4}(4). 
  The other inequalities are easy consequences of \Cref{lem:m1} and \Cref{basic}(5).
  The supplement follows from \Cref{rem:good}(1). 
\end{proof}

\subsection{The conjugate of a weight function}

The following conjugate will be important for the special partition of 
unity to be constructed in \Cref{sec:partition}.

\begin{definition}[The conjugate of a weight function]
Let $\omega : [0,\infty) \to [0,\infty)$ satisfy $\om(t) = o(t)$ as $t \to \infty$. We define
\begin{equation}\label{omegaconjugate}
\omega^{\star}(t):=\sup_{s\ge 0} \big(\omega(s)-st\big), \quad t>0.
\end{equation}  
\end{definition}

Then $\omega^\star$ is decreasing, continuous, and convex
with $\omega^{\star}(t) \to \infty$ as $t \to 0$, see \cite[Remark 1.5]{PetzscheVogt84}.
Since $\om(t) = o(t)$ as $t \to \infty$, $\om^\star(t)$ is finite for all $t$. 
If $\omega$ is concave and increasing, then, by \cite[Proposition 1.6]{PetzscheVogt84},
\begin{equation}\label{omegaconjugate1}
\omega(t)=\inf_{s>0} \big(\omega^{\star}(s)+st\big), \quad t>0.
\end{equation}

\begin{lemma} \label{lem:omsistar}
  Let $\om,\si : [0,\infty) \to [0,\infty)$ satisfy $\om(t) = o(t)$ and $\si(t) = o(t)$ as $t \to \infty$.
  Suppose that $\si(t) = O(\om(t))$ as $t \to \infty$. Then 
  \begin{equation}
    \E C \ge 1 \A t>0 : \si^\star (t) \le C \om^\star(t/C) + C.
  \end{equation}
\end{lemma}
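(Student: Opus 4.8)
The plan is to transport the asymptotic inequality $\si \lesssim \om$ through the conjugation $\om \mapsto \om^\star$, paying an additive constant to cover the region where the relation $\si(t) = O(\om(t))$ is not yet in force. First I would fix $A \ge 1$ and $t_0 > 0$ with $\si(s) \le A\,\om(s)$ for all $s \ge t_0$, which is possible by hypothesis (enlarging $A$ so that $A \ge 1$). I would also record the two facts about the conjugate established just above the statement: $\om^\star$ is decreasing, and $\om^\star(t)$ is finite for every $t>0$ because $\om(t) = o(t)$ as $t \to \infty$. In addition I note that $\om^\star \ge 0$, since $\om^\star(t) = \sup_{s\ge 0}(\om(s) - st) \ge \om(0) \ge 0$.

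Next I would split the supremum defining $\si^\star(t) = \sup_{s \ge 0}\big(\si(s) - st\big)$ at $s = t_0$. For $s \ge t_0$ the hypothesis gives $\si(s) - st \le A\,\om(s) - st = A\big(\om(s) - s\,(t/A)\big) \le A\,\om^\star(t/A)$. For $0 \le s \le t_0$ I would simply bound $\si(s) - st \le \si(s) \le B$, where $B := \sup_{0 \le s \le t_0}\si(s)$ is finite because $\si$ is bounded on the compact interval $[0,t_0]$. Combining the two ranges yields $\si^\star(t) \le B + A\,\om^\star(t/A)$ for every $t > 0$.

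Finally I would set $C := \max\{A,B,1\} \ge 1$ and invoke monotonicity of $\om^\star$: since $C \ge A$ we have $t/C \le t/A$, hence $\om^\star(t/C) \ge \om^\star(t/A)$, and as both sides are nonnegative with $C \ge A$ it follows that $A\,\om^\star(t/A) \le C\,\om^\star(t/C)$. Together with $B \le C$ this gives $\si^\star(t) \le C\,\om^\star(t/C) + C$, which is the assertion.

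There is no genuine obstacle here; the argument is elementary. The only point requiring a little care is the bookkeeping with the constant: the additive term $+C$ in the conclusion is exactly what absorbs the contribution of the small-$s$ range $[0,t_0]$, and the replacement of the argument $t/A$ by the possibly smaller $t/C$ is harmless precisely because $\om^\star$ is \emph{decreasing}, so shrinking the argument can only increase the value. I would also make sure to use the hypothesis $\om(t) = o(t)$ only where it is needed, namely to guarantee that $\om^\star(t)$ is finite for all $t>0$, so that every displayed inequality is between finite quantities.
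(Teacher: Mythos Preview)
Your argument is correct and is precisely the ``easy computation'' the paper alludes to: split the supremum defining $\si^\star$ at a threshold $t_0$ beyond which $\si \le A\om$, dominate the tail by $A\,\om^\star(t/A)$, absorb the bounded-range contribution into an additive constant, and then use that $\om^\star$ is decreasing to consolidate constants. One small remark: your justification that $B := \sup_{[0,t_0]}\si$ is finite appeals to compactness, which tacitly uses continuity (or at least local boundedness) of $\si$; this is not literally among the stated hypotheses of the lemma, but in the paper $\si$ is always a weight function and hence continuous, so there is no genuine gap.
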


\begin{proof}
  This is an easy computation.
\end{proof}

\subsection{The connection between \texorpdfstring{$\om_M^\star$}{} and \texorpdfstring{$\om_m$}{}} 

With every positive sequence $M$ satisfying $M_0=1$ and $M_k^{1/k} \to \infty$
we associate a function $\om_M$ 
by setting 
\[
  \om_M (t) = - \log h_M(1/t) = \sup_{k \in \N}  \log \Big(\frac{t^k}{M_k}\Big), \quad t>0. 
\] 
Then $\om_M$ is increasing, convex in $\log t$, and zero for sufficiently small $t>0$. 
If $M$ is a weight sequence such that $\liminf_{k \to \infty} m_k^{1/k} >0$ and 
$\liminf_{k \to \infty} \mu_{Qk}/\mu_k >1$ for some $Q \in \N$, 
then $\om_M$ is a weight function.
See \cite{Komatsu73} and \cite[Lemma 12]{BMM07}. 
The proof of the latter shows that  
$\om_M(t) = o(t)$ as $t \to \infty$ provided that $m_k^{1/k} \to \infty$.

There is a connection between $\om_M^\star$ and $\om_m$. We found this in \cite[Lemma 5.7.8]{Debrouwere14}. 

\begin{lemma} \label{lem:star}
  Let $M$ be a weight sequence such that $m_k^{1/k} \to \infty$.
  Then 
  \begin{equation} \label{star}
    \A t > 0 : \om_M^\star(t) \le \om_m\Big(\frac1t\Big) \le \om_M^\star\Big(\frac {t}{e}\Big).
  \end{equation}
\end{lemma}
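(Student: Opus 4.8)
The plan is to reduce both inequalities to the elementary Stirling sandwich $(k/e)^k \le k! \le k^k$, after unwinding the two definitions into a single supremum over $k \in \N$. First I would rewrite the conjugate explicitly. Since $\om_M(s) = \sup_{k\in\N}(k\log s - \log M_k)$ for $s>0$, I exchange the two suprema (legitimate, being a supremum over the product index set) to get
\[
  \om_M^\star(t) = \sup_{s \ge 0}\big(\om_M(s) - st\big) = \sup_{k \in \N} \sup_{s \ge 0}\big(k \log s - st - \log M_k\big).
\]
For $k \ge 1$ the inner supremum is attained at $s = k/t$ and equals $k\log(k/t) - k$, while the $k=0$ term contributes $0$; hence
\[
  \om_M^\star(t) = \sup_{k \in \N}\big(k\log k - k - k\log t - \log M_k\big).
\]

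Next I would expand the middle quantity. By definition, and using $M_k = k!\,m_k$,
\[
  \om_m\Big(\frac1t\Big) = \sup_{k\in\N}\big(-k\log t - \log m_k\big) = \sup_{k\in\N}\big(-k\log t - \log M_k + \log k!\big).
\]
Comparing the two displays summand by summand, the only difference in the $k$-th term is $\log k!$ against $k\log k - k$. The lower bound $\log k! \ge k\log k - k$ (i.e.\ $k! \ge (k/e)^k$) then gives, term by term, $\om_M^\star(t) \le \om_m(1/t)$, which is the left inequality.

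For the right inequality I would first record that replacing $t$ by $t/e$ exactly cancels the linear term: since $\log(t/e) = \log t - 1$,
\[
  \om_M^\star\Big(\frac te\Big) = \sup_{k\in\N}\big(k\log k - k\log t - \log M_k\big),
\]
the $-k$ and $+k$ cancelling. Then $\log k! \le k\log k$ (i.e.\ $k! \le k^k$) yields $\om_m(1/t) \le \om_M^\star(t/e)$ term by term, completing the proof. No step presents a genuine obstacle; the content is entirely the Stirling sandwich together with the bookkeeping of the $e$-shift. The only points deserving a word of care are the exchange of suprema and the degenerate $k=0$ term, which contributes $0$ to every supremum and is consistent with $\om_M$ vanishing near $0$.
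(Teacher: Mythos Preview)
Your proof is correct and is essentially identical to the paper's own argument: both exchange the two suprema to compute $\om_M^\star(t)=\sup_{k\in\N}\log\big(k^k/((et)^kM_k)\big)$ and then invoke the Stirling sandwich $k!\le k^k\le e^k k!$ (equivalently $(k/e)^k\le k!\le k^k$) to compare term by term with $\om_m(1/t)$. The only cosmetic difference is that you write the computation additively in logarithms while the paper keeps it multiplicative.
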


\begin{proof}
  We have $\om_M(t) = o(t)$ as $t \to \infty$ and so $\om_M^\star$ is well-defined. For $s>0$, 
\begin{align*}
\omega_M^{\star}(s):=\sup_{t\ge 0}\big(\omega_M(t)-st\big)
=\sup_{k\in\N}\sup_{t\ge 0}\Big(\log\Big(\frac{t^k}{M_k}\Big)-st\Big)=\sup_{k\in\NN}\log\Big(\frac{k^k}{(es)^kM_k}\Big),
\end{align*}
by an easy calculation. Using $k! \le k^k \le e^k k!$ we find 
\[
  \omega_M^{\star}(s) \le \sup_{k\in\N}\log\Big(\frac{1}{s^km_k}\Big) = \om_m\Big(\frac{1}{s}\Big) 
  \le \sup_{k\in\NN}\log\Big(\frac{k^k}{s^kM_k}\Big) = \om_M^\star\Big(\frac{s}{e}\Big)
\]
as required.
\end{proof}

\begin{corollary}
  Let $\om$ be a weight function satisfying $\om(t) = o(t)$ as $t \to \infty$.
  Let $\fW$ be the associated weight matrix.
  Then, for all $M \in \fW$ there exists $C\ge 1$ such that for all $t>0$ 
  \begin{equation} \label{eq:omMom1}
  \om^\star (t) \le C \om_M^\star\Big(\frac{t}{C}\Big) + C 
  \quad \text{ and } \quad \om_M^\star (t) \le C \om^\star\Big(\frac{t}{C}\Big) + C 
  \end{equation}
  as well as
  \begin{equation}
  \om^\star (t) \le C \om_m\Big(\frac{C}{t}\Big) + C
  \quad \text{ and } \quad
  \om_m(t) \le C \om^* \Big(\frac{1}{eC t}\Big) + C.
  \end{equation}
  In particular,
  \begin{equation} \label{eq:key1}
    \exp(\om^\star(t)) \le \Big(\frac{e}{h_m(t/C)}\Big)^C.
  \end{equation}
\end{corollary}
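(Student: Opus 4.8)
The plan is to fix $M = W^x \in \fW$ and to funnel everything through the two auxiliary estimates already prepared in \Cref{lem:omsistar} and \Cref{lem:star}, the only genuinely new input being a two-sided comparison between $\om$ and the associated function $\om_M$. As a preliminary I would record that $\om_M(t) = o(t)$ as $t\to\infty$: this holds because $(w^x_k)^{1/k}\to\infty$ by \Cref{lemma4}(6) (here is where $\om(t)=o(t)$ enters), so $\om_M^\star$ is well-defined and, together with $\om$, falls under the hypotheses of \Cref{lem:omsistar}.

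The heart of the matter is the claim that $\om$ and $\om_M$ are equivalent, i.e.\ $\om(t) = O(\om_M(t))$ and $\om_M(t) = O(\om(t))$. Writing $g(u) := \tfrac1x \vh^*(xu)$, so that $\log W^x_k = g(k)$, the biduality $\vh^{**}=\vh$ gives, for $t \ge 1$,
\[
  \tfrac1x\om(t) = \sup_{u \ge 0}\big(u\log t - g(u)\big),
  \qquad
  \om_M(t) = \sup_{k \in \N}\big(k\log t - g(k)\big).
\]
The bound $\om_M(t) \le \tfrac1x\om(t)$ is immediate, the second supremum running over a subset of the index set. For the reverse I would evaluate the continuous supremum at its maximizer $u^*$ (where $g'(u^*)=\log t$) and compare with the integer $\lfloor u^*\rfloor$; convexity and monotonicity of $g$ bound the loss by $\log t$, so $\tfrac1x\om(t) \le \om_M(t) + \log t$. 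Then \eqref{om3} lets me absorb the error: choosing the threshold so that $\log t \le \tfrac{1}{2x}\om(t)$ for large $t$ yields $\om(t) \le 2x\,\om_M(t)$, completing the equivalence. (Small $t$ is harmless, since $\om$ and $\om_M$ both vanish near $0$ and the target inequalities carry additive constants.)

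With the equivalence in hand, the two inequalities of \eqref{eq:omMom1} follow by applying \Cref{lem:omsistar} twice, once with $(\om_M,\om)$ and once with $(\om,\om_M)$ in place of $(\si,\om)$, and enlarging $C$. The inequalities relating $\om^\star$ and $\om_m$ then drop out by inserting \Cref{lem:star}: for the first, $\om^\star(t) \le C\om_M^\star(t/C)+C \le C\om_m(C/t)+C$ via $\om_M^\star(s)\le\om_m(1/s)$; for the second, the right-hand estimate of \eqref{star} with $s=1/t$ gives $\om_m(t) \le \om_M^\star(\tfrac{1}{et})$, and \eqref{eq:omMom1} then gives $\om_M^\star(\tfrac{1}{et}) \le C\om^\star(\tfrac{1}{eCt})+C$. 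Finally \eqref{eq:key1} is just the exponential of the first of these: since $\om_m(C/t) = -\log h_m(t/C)$, one obtains $\exp(\om^\star(t)) \le e^C h_m(t/C)^{-C} = (e/h_m(t/C))^C$.

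The main obstacle is the lower bound $\om(t) \le C\om_M(t)$ in the equivalence step; everything after it is assembly of the two lemmas and bookkeeping of constants. What makes that step delicate is that the discretization error is of exactly the critical order $\log t$, so the argument genuinely relies on the defining property \eqref{om3} of a weight function to absorb it, together with some care at small $t$ and at a boundary maximizer $u^* < 1$.
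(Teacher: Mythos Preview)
Your proof is correct and follows the same architecture as the paper's: establish the equivalence $\om \sim \om_M$, then apply \Cref{lem:omsistar} and \Cref{lem:star} in sequence. The only difference is that the paper outsources the equivalence to \cite[Lemma 5.7]{RainerSchindl12}, whereas you supply a direct argument via the discretization of the Young-conjugate supremum and absorption of the $\log t$ error through \eqref{om3}; your version is self-contained and the discretization step is handled correctly.
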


\begin{proof}
  By \cite[Lemma 5.7]{RainerSchindl12}, for each $M \in \fW$, we have $\om(t) = O(\om_M(t))$ and $\om_M(t) = O(\om(t))$ 
  as $t \to \infty$.	
  So \eqref{eq:omMom1} is a consequence of 
  \Cref{lem:omsistar}. 
  The rest follows from \Cref{lem:star}.
\end{proof}

In the proof of the following lemma 
log-convexity of the sequences was used.

\begin{lemma}[{\cite[Remark 2.5]{RainerSchindl16a}}] \label{lem:mg}
   Let $M$ and $N$ be weight sequences such that 
  \begin{equation}
    \E C \ge 1 \A k,j \in \N : M_{k+j} \le C^{k+j} N_j N_{k}. \label{mg0}
  \end{equation}
  Then $h_M(t) \le h_{N}(Ct)^2$ for all $t>0$.
\end{lemma}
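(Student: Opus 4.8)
The plan is to argue directly from the definition $h_M(t)=\inf_{\ell\in\N} M_\ell t^\ell$ (and likewise $h_N(t)=\inf_{k\in\N}N_k t^k$), exploiting the fact that the hypothesis \eqref{mg0} splits a single index $\ell=j+k$ into two independent indices. The main point to watch is the manipulation of infima; reassuringly, this route uses neither log-convexity nor any attainment of the infima, in contrast to the proof referenced from \cite{RainerSchindl16a}.

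First I would fix $t>0$ and restrict the infimum defining $h_M(t)$ to indices of the form $\ell=j+k$, then insert \eqref{mg0}. For every pair $j,k\in\N$ this gives
\[
  h_M(t)\le M_{j+k}\,t^{j+k}\le C^{j+k}N_jN_k\,t^{j+k}=\big(N_j(Ct)^j\big)\big(N_k(Ct)^k\big),
\]
so that $h_M(t)$ is a common lower bound for the doubly-indexed family $\big(N_j(Ct)^j\big)\big(N_k(Ct)^k\big)$.

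Next I would pass to the infimum one index at a time. Since $N_k(Ct)^k$ is a positive constant with respect to $j$, taking the infimum over $j$ and using that $h_M(t)$ bounds every term from below yields, for each fixed $k$,
\[
  h_M(t)\le \Big(\inf_{j\in\N} N_j(Ct)^j\Big)\,N_k(Ct)^k = h_N(Ct)\,N_k(Ct)^k.
\]
Taking the infimum over $k$ in the same way then gives
\[
  h_M(t)\le h_N(Ct)\,\inf_{k\in\N}N_k(Ct)^k = h_N(Ct)^2,
\]
which is exactly the claim, and $t>0$ was arbitrary. The only step requiring a word of care is this iterated passage to the infimum: it is legitimate precisely because all the quantities $N_j(Ct)^j$ are nonnegative and $h_M(t)$ is an honest lower bound for the entire family, so that multiplying by the positive constant $N_k(Ct)^k$ commutes with $\inf_j$. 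No log-convexity of $M$ or $N$ enters the argument; I therefore do not anticipate any genuine obstacle, only the bookkeeping of getting the two factors of $h_N(Ct)$ to separate cleanly.
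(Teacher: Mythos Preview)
Your argument is correct. The paper does not give an in-line proof of this lemma; it simply cites \cite[Remark~2.5]{RainerSchindl16a} and explicitly notes, just before the statement, that ``log-convexity of the sequences was used'' there. Your route is genuinely different: you work directly from the definition $h_M(t)=\inf_\ell M_\ell t^\ell$, split $\ell=j+k$, and take the two infima separately. This uses nothing beyond positivity of the terms and the hypothesis \eqref{mg0}, so log-convexity never enters.

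This has a pleasant by-product. Since $M_{j+k}\le C^{j+k}N_jN_k$ implies, after dividing by $(j+k)!$ and using $\binom{j+k}{j}\ge 1$, that $m_{j+k}\le C^{j+k}n_jn_k$, your identical two-step argument applied to the lowercase sequences yields $h_m(t)\le h_n(Ct)^2$ directly. That is precisely \Cref{lem:hmodgrowth}, which the paper proves by a detour through $\om_M^\star$, $\om_m$, and \Cref{lem:star}. So your approach buys brevity and a strictly weaker hypothesis (no log-convexity), and in fact subsumes the next lemma as well; the paper's route buys consistency with the surrounding machinery relating $\om_M^\star$ and $\om_m$, which is used elsewhere.
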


We need a corresponding version for the sequences $m,n$ which are not log-convex in general. 
This can be achieved by using the connection between $\om_M^\star$ and $\om_m$.

\begin{lemma} \label{lem:hmodgrowth}
  Let $M$ and $N$ be weight sequences satisfying \eqref{mg0} and $m_k^{1/k} \to \infty$ and $n_k^{1/k} \to \infty$.
  Then there is a $D\ge 1$ such that $h_m(t) \le h_{n}(Dt)^2$ for all $t>0$.
\end{lemma}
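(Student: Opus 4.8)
I want to prove \Cref{lem:hmodgrowth}: given weight sequences $M, N$ with $M_{k+j} \le C^{k+j} N_j N_k$ and with $m_k^{1/k} \to \infty$, $n_k^{1/k} \to \infty$, I must produce $D \ge 1$ so that $h_m(t) \le h_n(Dt)^2$ for all $t>0$. The text explicitly tells me two things: the log-convex analogue (\Cref{lem:mg}) is already available, and the intended route is \emph{``by using the connection between $\om_M^\star$ and $\om_m$''}, i.e.\ \Cref{lem:star}. So the plan is to transport the inequality from the $M$-world, where moderate growth \eqref{mg0} directly gives the conclusion via \Cref{lem:mg}, over to the $m$-world, using the dictionary $h_m \leftrightarrow \om_m \leftrightarrow \om_M^\star \leftrightarrow h_M$.

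\begin{proof}
  First recall the definition $\om_M(t) = -\log h_M(1/t)$, equivalently $h_M(t) = \exp(-\om_M(1/t))$, and similarly for $h_m$ in terms of $\om_m$. The plan is to translate the target inequality $h_m(t) \le h_n(Dt)^2$ into a statement about the associated functions and then chain together the three available ingredients.

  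Taking $-\log$ of both sides and substituting $t \mapsto 1/t$, the desired bound $h_m(t) \le h_n(Dt)^2$ is equivalent to
  \[
    \om_n\Big(\frac{t}{D}\Big) \le 2\,\om_m(t) \quad \text{ for all } t>0,
  \]
  where I have written the inequality after the substitution so that $\om_m(1/\cdot)$ and $\om_n(1/\cdot)$ appear with matching arguments. Now I invoke \Cref{lem:star}, which applies to both $M$ and $N$ since $m_k^{1/k}\to\infty$ and $n_k^{1/k}\to\infty$: it gives
  \[
    \om_M^\star(t) \le \om_m\Big(\frac1t\Big) \le \om_M^\star\Big(\frac te\Big), \qquad
    \om_N^\star(t) \le \om_n\Big(\frac1t\Big) \le \om_N^\star\Big(\frac te\Big).
  \]
  Thus $\om_m$ and $\om_n$ are squeezed between the Young-type conjugates $\om_M^\star$ and $\om_N^\star$ (up to the harmless factor $e$ in the argument). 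So it suffices to prove a comparison of the form $\om_N^\star(t/D') \lesssim \om_M^\star(t)$, and the factors of $e$ can be absorbed into the constant $D$.

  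Next I extract such a conjugate comparison from the hypothesis \eqref{mg0}. On the one hand, \Cref{lem:mg} applies verbatim to $M, N$ and yields $h_M(t) \le h_N(Ct)^2$ for all $t>0$; rewriting this through $h_M(t) = \exp(-\om_M(1/t))$ gives $\om_N(t/C) \le 2\,\om_M(t)$, i.e.\ a comparison of the functions $\om_M, \om_N$ themselves. Passing to conjugates reverses and rescales this: from $\om_N(t/C) \le 2\om_M(t)$ one obtains, by the definition $\om^\star(s) = \sup_t(\om(t)-st)$ and a routine change of variables, a bound of the shape $\om_N^\star(s) \le 2\,\om_M^\star(s/(2C)) + \text{const}$, exactly the kind of estimate recorded abstractly in \Cref{lem:omsistar}. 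Feeding this into the squeeze from the previous paragraph produces $\om_n(t) \le \text{(const)}\cdot\om_m(t/D) + \text{const}$ for a suitable $D$, which on exponentiating and undoing the $-\log$ and $1/t$ substitution is precisely $h_m(t) \le h_n(Dt)^2$ after adjusting $D$.

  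The one point that needs care, and which I expect to be the main technical obstacle, is the passage from a comparison of two increasing convex-in-$\log$ functions to a comparison of their conjugates, together with bookkeeping of the additive constants and the multiplicative factors ($C$ from \Cref{lem:mg}, $e$ from \Cref{lem:star}, and any factor of $2$). These constants do not threaten the conclusion because $h_m, h_n$ eventually equal $1$ and are bounded, so an additive constant on the $\om$-side becomes a bounded multiplicative factor on the $h$-side that can be absorbed by enlarging $D$; but one must check that the inequality is not merely asymptotic and genuinely holds for \emph{all} $t>0$, using that $\om_m, \om_n$ vanish for small argument. Apart from this constant-chasing, every step is a direct application of \Cref{lem:mg}, \Cref{lem:star}, and \Cref{lem:omsistar}.
\end{proof}
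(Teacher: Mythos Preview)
Your route is exactly the paper's: use \Cref{lem:mg} for $h_M(t)\le h_N(Ct)^2$, rewrite this as an inequality between $\om_M$ and $\om_N$, pass to conjugates, then apply \Cref{lem:star} to descend to $\om_m,\om_n$. But your placement of the factor $2$ is consistently on the wrong side, and this is \emph{not} a harmless constant that can be absorbed into $D$: it decides which of $h_m,h_n$ gets squared. The target $h_m(t)\le h_n(Dt)^2$ is equivalent to $2\,\om_n(t/D)\le\om_m(t)$, not to $\om_n(t/D)\le 2\,\om_m(t)$; the latter unwinds to $h_m(t)^2\le h_n(Dt)$, a strictly weaker statement (since $h_m,h_n\le 1$) that does not give the lemma. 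The same swap appears in your reading of \Cref{lem:mg}: one actually gets $2\,\om_N(t)\le\om_M(Ct)$. If you carry your version through the conjugation and \eqref{star} as you sketch, the chain ends at $\om_n(u)\le 2\,\om_m(D'u)$, i.e.\ $h_m(s)^2\le h_n(D's)$ --- the wrong inequality.

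The fix is simply to keep the $2$ on the $N$-side throughout. Conjugation then reads
\[
2\,\om_N^\star(t)=\sup_{s\ge 0}\big(2\,\om_N(s)-2ts\big)\le\sup_{s\ge 0}\big(\om_M(Cs)-2ts\big)=\om_M^\star\!\Big(\frac{2t}{C}\Big),
\]
and \eqref{star} gives
\[
2\,\om_n\Big(\frac1t\Big)\le 2\,\om_N^\star\!\Big(\frac te\Big)\le\om_M^\star\!\Big(\frac{2t}{eC}\Big)\le\om_m\Big(\frac{eC}{2t}\Big),
\]
which is $2\,\om_n(u)\le\om_m(eCu/2)$ and hence $h_m(t)\le h_n(Dt)^2$ with $D=eC/2$. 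Note that no additive constants ever appear, so the appeal to \Cref{lem:omsistar} and the worry in your final paragraph are unnecessary; the direct computation is both cleaner and preserves the crucial exponent~$2$.
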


\begin{proof}
  By \Cref{lem:mg}, $h_M(t) \le h_{N}(Ct)^2$ and hence $2\, \om_{N}(t) \le \om_M(Ct)$ for all $t>0$. 
  Then 
  \begin{align*}
    2\, \om^\star_{N}(t) = \sup_{s \ge 0} \big( 2\,\om_{N} (s) - 2ts\big)
    \le \sup_{s \ge 0} \big( \om_{M} (Cs) - 2ts\big) =  \om^\star_{M} \Big(\frac{2t}C\Big).
  \end{align*}
  By \eqref{star},
  \begin{align*}
    2\, \om_{n} \Big(\frac1t\Big) \le 2\,\om_{N}^\star\Big(\frac te\Big) 
    \le \om^\star_{M}\Big(\frac{2t}{eC}\Big) \le \om_m\Big(\frac{eC}{2t}\Big). 
  \end{align*}
  This entails the statement.
\end{proof}

\subsection{The heirs of a weight function}

We introduce notation for our convenience.

\begin{definition}[The heirs of a weight function]
Let $\om$ be a non-quasianalytic weight function. Then 
\begin{equation}\label{eq:scion}
\ka(t) = \ka_{\omega}(t):=\int_1^{\infty}\frac{\omega(tu)}{u^2}du=t\int_t^{\infty}\frac{\omega(u)}{u^2}du, \quad t>0,
\end{equation}
defines a weight function (possibly quasianalytic) satisfying $\ka(t) = o(t)$ as $t \to \infty$; 
cf.\ \cite[Remark 3.20]{BonetMeiseTaylor92}.   
Moreover, $\ka$ is concave; see \cite[Proposition 1.3]{MeiseTaylor88}. 
Since $\om$ is increasing we have $\ka\ge \om$, which implies $K^x \le W^x$ for all $x>0$, 
where $\{K^x\}_{x>0}$ is the weight matrix associated with $\ka$.

All weight functions $\si$ satisfying $\si(t) = o(t)$ and $\ka(t) = O(\si(t))$ as $t \to \infty$, i.e.,
\begin{equation} \label{eq:heir}
  \E C >0 \A t>0 : \int_1^{\infty}\frac{\omega(tu)}{u^2}du \le C \si(t) +C, 
\end{equation}
are called \emph{heirs of the weight function $\om$}.
A \emph{good heir of $\om$} is a heir of $\om$ which is a good weight function in the sense of \Cref{def:good}.
If $\om$ itself is a heir of $\om$, then $\om$ is said to be a \emph{strong} weight function.
\end{definition}

In particular, $\ka$ is a heir of $\om$. 
By \cite{BonetMeiseTaylor92}, 
the condition \eqref{eq:heir} is necessary and sufficient for the surjectivity of 
$j^\infty_{\{0\}}  : \cB^{\{\om\}}(\R^n) \to \cB^{\{\si\}}(\{0\})$. 
That a heir $\si$ satisfies $\si(t) = o(t)$ as $t \to \infty$ guarantees that we can work with the conjugate 
$\si^\star$.

\begin{lemma} \label{lem:heir}
  Let $\om$ be a non-quasianalytic weight function and $\si$ a heir of $\om$.
  Let $\fW = \{W^x\}_{x>0}$ and $\fS = \{S^x\}_{x>0}$ be the weight matrices associated with $\om$ and $\si$,
  respectively. 
  Then 
  \[
    \E C\ge 1 \A x >0 : S^{x} \le e^{1/x}\, W^{Cx}.
  \]
\end{lemma}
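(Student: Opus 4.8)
The plan is to reduce the asserted matrix inequality to a single pointwise estimate between the Young conjugates of the convexified weights, and to obtain that estimate from the defining property of a heir.

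First I would extract a pointwise comparison of $\om$ and $\si$. Since $\si$ is a heir of $\om$, condition \eqref{eq:heir} applies, and because $\om$ is increasing we have $\ka(t) = \int_1^\infty \om(tu)u^{-2}\,du \ge \om(t)\int_1^\infty u^{-2}\,du = \om(t)$. Combining these gives a constant $C \ge 1$ with
\[
  \om(t) \le \ka(t) \le C\si(t) + C, \qquad t > 0.
\]
Writing $\vh_\om(t) := \om(e^t)$ and $\vh_\si(t) := \si(e^t)$ for the two convexifications (both convex by \eqref{om4}) and substituting $t \mapsto e^t$ turns this into $\vh_\om(t) \le C\vh_\si(t) + C$, that is $\vh_\si(t) \ge \tfrac1C\vh_\om(t) - 1$, for all $t \ge 0$ (the small-$t$ range is harmless since we may normalize $\om,\si$ to vanish on $[0,1]$).

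Next I would pass to the Young conjugates $\vh_\om^*$ and $\vh_\si^*$ which define the weight matrices $\fW$ and $\fS$. Inserting the lower bound for $\vh_\si$ into the supremum defining $\vh_\si^*$,
\[
  \vh_\si^*(t) = \sup_{s\ge 0}\big(st - \vh_\si(s)\big)
  \le \sup_{s\ge 0}\big(st - \tfrac1C\vh_\om(s)\big) + 1
  = \tfrac1C\,\vh_\om^*(Ct) + 1
\]
for all $t > 0$, where the final equality factors $1/C$ out of the supremum (using $st - \tfrac1C\vh_\om(s) = \tfrac1C(Cst - \vh_\om(s))$) and recognizes $\sup_{s\ge 0}(Cst - \vh_\om(s)) = \vh_\om^*(Ct)$.

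Finally I would substitute back into the definitions. Recalling $S^x_k = \exp(\tfrac1x\vh_\si^*(xk))$ and $W^{Cx}_k = \exp(\tfrac{1}{Cx}\vh_\om^*(Cxk))$, the choice $t = xk$ in the conjugate estimate gives $\tfrac1x\vh_\si^*(xk) \le \tfrac{1}{Cx}\vh_\om^*((Cx)k) + \tfrac1x$; exponentiating yields $S^x_k \le e^{1/x}\,W^{Cx}_k$ for every $k \in \N$ and every $x > 0$, which is exactly the claim. Each step is an equality or a one-line estimate, so there is no real obstacle here; the only delicate point is the bookkeeping of constants under Young conjugation, namely that the multiplicative constant in $\om \le C\si + C$ must resurface as the dilation in $W^{Cx}$ (through the inner rescaling $\tfrac1C\vh_\om^*(C\,\cdot)$) while the additive constant is precisely what produces the factor $e^{1/x}$. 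This is the same mechanism already exploited in \Cref{lem:omsistar} and in the proof of \Cref{lem:hmodgrowth}, now carried out for the Young conjugate rather than for the conjugate $\om^\star$.
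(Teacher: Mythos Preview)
Your argument is correct and is precisely the approach taken in the paper, only spelled out in more detail: the paper writes that $\om \le \ka \le C\si + C$ gives $\vh_\om \le C\vh_\si + C$, hence $\vh_\om^*(Ct) + C \ge C\vh_\si^*(t)$, which entails the assertion. Your derivation fills in exactly these steps.
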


\begin{proof}
  By \eqref{eq:heir}, $\om \le \ka \le C \si + C$ and hence $\vh_\om \le C \vh_\si +C$.
  For the Young conjugates this means $\vh_\om^*(Ct) + C \ge C \vh_\si^*(t)$ which entails the assertion. 
\end{proof}

Next we recall that \eqref{eq:heir} can be equivalently stated with $\om$ replaced by its harmonic extension.
For a continuous function $u : \R \to \R$ with $\int_\R \frac{|u(t)|}{1+t^2}\, dt <\infty$, we define its 
\emph{harmonic extension} $P_u : \C \to \R$ by 
\[
  P_u(x+iy) := \begin{cases}
    \frac{|y|}{\pi} \int_\R \frac{u(t)}{(t-x)^2 + y^2}\, dt & \text{ if } y \ne 0,\\
    u(x) & \text{ if } y =0.
  \end{cases}
\]
Then $P_u$ is continuous on $\C$ and harmonic in the open upper and lower half plane.
If $\om$ is a weight function, we extend $\om$ to $\C$ by $z \mapsto \om(|z|)$, and $P_\om$ denotes the 
harmonic extension of $t \mapsto \om(|t|)$. We have $\om \le P_\om$, cf.\ \cite[Remark 1.6]{MeiseTaylor88}.

\begin{lemma}[{\cite[Lemma 3.3]{BonetMeiseTaylor92}}] \label{lem:scionharmonicextension}
	For a non-quasianalytic weight function $\om$ and a weight function $\si$ the following conditions are equivalent:
	\begin{enumerate}
	 	\item $\ka_\om(t) = O(\si(t))$ as $t \to \infty$.
	 	\item $P_\om(t) = O(\si(t))$ as $t \to \infty$. 
	 \end{enumerate} 
\end{lemma}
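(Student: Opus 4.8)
The plan is to reduce the claimed equivalence to a two-sided comparison, up to \emph{absolute} multiplicative constants, between the harmonic extension $P_\om$ evaluated on the imaginary axis and the heir $\ka_\om$. Since the weight function $\si$ enters conditions (1) and (2) in exactly the same way, once I know that $P_\om(it)$ and $\ka_\om(t)$ are comparable up to constants that are independent of $\om$ and of $t$, the equivalence $\ka_\om(t) = O(\si(t)) \Leftrightarrow P_\om(t) = O(\si(t))$ follows at once. (Here, as is customary, $P_\om(t)$ is understood as the value $P_\om(it)$ of the harmonic extension on the positive imaginary axis; on the real axis $P_\om$ merely reproduces the boundary values $\om(|x|)$, so the nontrivial content of (2) lives on the imaginary axis.)

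The key step is an explicit evaluation of the Poisson integral at $z = it$, $t>0$. Using that $s \mapsto \om(|s|)$ is even and substituting $s = tu$, I would obtain
\[
  P_\om(it) = \frac{t}{\pi}\int_\R \frac{\om(|s|)}{s^2+t^2}\,ds
  = \frac{2}{\pi}\int_0^\infty \frac{\om(tu)}{1+u^2}\,du.
\]
I then split this integral at $u=1$. On $[0,1]$, monotonicity of $\om$ together with $1+u^2 \ge 1$ gives the crude bound $\int_0^1 \frac{\om(tu)}{1+u^2}\,du \le \om(t)$. On $[1,\infty)$, the elementary inequalities $u^2 \le 1+u^2 \le 2u^2$ show that $\int_1^\infty \frac{\om(tu)}{1+u^2}\,du$ differs from $\ka_\om(t) = \int_1^\infty \frac{\om(tu)}{u^2}\,du$ by at most a factor $2$ in either direction.

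Combining these two estimates and invoking the already-noted inequality $\om \le \ka$ (valid because $\om$ is increasing) to absorb the near contribution $\om(t)$, I expect to arrive at a comparison of the form
\[
  \tfrac{1}{\pi}\,\ka_\om(t) \le P_\om(it) \le \tfrac{4}{\pi}\,\ka_\om(t), \quad t>0,
\]
with fully explicit constants. This proves $P_\om(it) \asymp \ka_\om(t)$ and hence the equivalence of (1) and (2). I do not anticipate a genuine obstacle here: the statement is classical (it is \cite[Lemma 3.3]{BonetMeiseTaylor92}), and the only points requiring care are the correct normalisation of the Poisson kernel in the substitution $s=tu$ and the interpretation of $P_\om$ as its restriction to the imaginary axis; everything else is the routine estimate on which the ``easy computation'' in the analogous lemmas (e.g.\ \Cref{lem:omsistar}) rests.
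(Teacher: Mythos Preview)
The paper does not supply its own proof of this lemma; it is simply quoted from \cite[Lemma~3.3]{BonetMeiseTaylor92}. So there is nothing in the paper to compare your argument against, and your computation of the Poisson integral on the imaginary axis is correct: the substitution $s=tu$, the split at $u=1$, and the use of $\om\le\ka_\om$ together give exactly the two-sided bound $\tfrac{1}{\pi}\ka_\om(t)\le P_\om(it)\le \tfrac{4}{\pi}\ka_\om(t)$ you claim.

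One point deserves care. You interpret condition~(2) as a statement about $P_\om(it)$, and under that reading your proof is complete. However, the paper later \emph{applies} this lemma in the proof of \Cref{lemma27BBMTWhitney} in the form $P_\om(z)\le C\si(|z|)+C$ for \emph{all} $z\in\C$, not just for $z=it$. Passing from the imaginary-axis estimate to the uniform one is a further (standard) step: one shows that $\sup_{|z|=r}P_\om(z)$ is comparable to $P_\om(ir)$, which uses the doubling condition~\eqref{om1} for $\om$ (and, to absorb constants on the right, for $\si$). You do not address this reduction. It is routine and appears, for instance, in \cite{MeiseTaylor88}, but if you intend your write-up to cover the lemma as it is actually used downstream, you should either add this step or at least cite where it is done.
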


\subsection{Concave and good weight functions} \label{sec:good}

Let $\om$ be a non-quasianalytic weight function.
The weight function $\ka = \ka_\om$ defined in \eqref{eq:scion} is concave, and hence subadditive, 
since $\ka(0)= 0$. 
Since $\ka$ is the heir of $\om$ which defines the largest function space among all heirs of $\om$, it is of interest to  
find conditions which guarantee that $\ka$ is a good heir of $\om$.

Let us recall a result which relates concavity of a weight function with a condition on the associated 
weight matrix.

\begin{theorem}[{\cite[Theorem 3 and 5]{RainerSchindl14}}] \label{thm:subadditive}
Let $\om$ be a weight function and let $\fW = \{W^x\}_{x >0}$ be the associated weight matrix. 
Then the following conditions are equivalent:
\begin{enumerate}
  \item $\om$ is equivalent to its least concave majorant.
  \item $\exists C>0 \E t_0 >0 \A \la \ge 1 \A t \ge t_0 : \om(\la t)\le C \la \, \om(t)$.
  \item $\forall x>0 \E y>0   \E D \ge 1 \A  1 \le j \le k : (w^x_j)^{1/j} \le D\, (w^y_k)^{1/k}$. \label{eq:m6}
\end{enumerate}
\end{theorem}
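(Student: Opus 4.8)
The plan is to establish the two equivalences $(1)\Leftrightarrow(2)$ and $(2)\Leftrightarrow(3)$ separately: the first is a statement about $\om$ and its least concave majorant, the second transfers it to the weight matrix through the Young conjugate $\vh^*$. For $(1)\Rightarrow(2)$, let $\hat\om$ denote the least concave majorant of $\om$. Since $\om(0)=0$ and $\om(t)=O(t)$ one has $\hat\om(0)=0$, so $\hat\om(t)/t$ is decreasing by concavity and hence $\hat\om(\la t)\le\la\,\hat\om(t)$ for $\la\ge 1$. As $\om\le\hat\om$ and, by $(1)$, $\hat\om(t)\le C_0\,\om(t)$ for large $t$, we get $\om(\la t)\le\hat\om(\la t)\le\la\,\hat\om(t)\le C_0\la\,\om(t)$, which is $(2)$.

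For the converse $(2)\Rightarrow(1)$, I would first record that $(2)$ is equivalent to saying that $\om(t)/t$ is \emph{almost decreasing}, i.e.\ $\om(s)/s\le C\,\om(t)/t$ whenever $s\ge t\ge t_0$. Then I would use the secant representation of the least concave majorant, $\hat\om(t)=\sup\{\ell_{a,b}(t):0\le a\le t\le b\}$, where $\ell_{a,b}$ is the affine interpolant of $\om$ at $a$ and $b$. For $t_0\le a\le t\le b$ the bounds $\om(a)\le\om(t)$ and $\om(b)\le C(b/t)\om(t)$, together with the elementary inequalities $\tfrac{b-t}{b-a}\le 1$ and $\tfrac{b(t-a)}{t(b-a)}\le 1$ (the latter reducing to $a(t-b)\le 0$, so using only $a\ge 0$ and $t\le b$), give $\ell_{a,b}(t)\le(1+C)\om(t)$; the configurations with $a<t_0$ contribute only a bounded additive term, and the limit $b\to\infty$ is controlled by $\lim_b\om(b)/b\le C\om(t)/t$. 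Hence $\hat\om(t)\lesssim\om(t)$ for large $t$, which is $(1)$.

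For $(2)\Leftrightarrow(3)$ I would pass to the conjugate side. From $\log W^x_k=\tfrac1x\vh^*(xk)$ one has the exact identity $(W^x_k)^{1/k}=\exp\!\big(\vh^*(xk)/(xk)\big)$, and Stirling's formula $(k!)^{1/k}\sim k/e$ gives $(w^x_k)^{1/k}\sim ex\,\exp\!\big(G(xk)\big)$ with $G(s):=\vh^*(s)/s-\log s$. Thus, after absorbing the $x$-dependent constants into $D$, condition $(3)$ amounts to $G$ being \emph{almost increasing} in the quantified form $\A x\,\E y\,\E D\,\A j\le k:G(xj)\le\log D+G(yk)$. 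The heart of the matter is a conjugate identity: at dual points $s=\vh'(u)$ one computes
\[
  (\vh^*)'(s)-\frac{\vh^*(s)}{s}=u-\frac{su-\vh(u)}{s}=\frac{\vh(u)}{\vh'(u)},
\]
so that $G'(s)=\tfrac1s\big((\vh^*)'(s)-\vh^*(s)/s-1\big)\ge 0$ is equivalent to $\vh'(u)\le\vh(u)$. Since $\tfrac{d}{du}\big(\vh(u)e^{-u}\big)=e^{-u}(\vh'(u)-\vh(u))$ and $\om(t)/t=\vh(u)e^{-u}$ under $t=e^u$, this last inequality is exactly the infinitesimal form of $(2)$, making the equivalence transparent in the smooth, constant-free case.

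The main obstacle is upgrading this clean duality to the quantified statement $(3)$. The multiplicative constant $C$ in $(2)$ is not preserved under Young conjugation and must be absorbed into the passage from the parameter $x$ to a larger $y$; this is the same mechanism as in \Cref{lemma4}\eqref{5.10}, where multiplying a sequence by $\rh^k$ is compensated by enlarging $x$, and it is precisely why $(3)$ needs the Roumieu shift $\A x\,\E y$ rather than a fixed parameter. Moreover $\vh$ need not be differentiable and the index $k$ is discrete, so the pointwise identity above has to be replaced throughout by its almost-monotone analogues. For this bookkeeping I would lean on $\om\approx\om_{W^x}$ and on the relation $\om_m(1/t)\approx\om_M^\star(t)$ of \Cref{lem:star}, which keep track of the factorial normalization responsible for the extra $-\log k$ term in $G$.
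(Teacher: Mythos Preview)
First, note that the paper does not supply its own proof of this theorem: it is quoted from \cite{RainerSchindl14}, with the remark that $(1)\Leftrightarrow(2)$ is in \cite{PetzscheVogt84} (via \cite[Lemma~1]{Peetre70}) and that $(2)\Leftrightarrow(3)$ is obtained in \cite{RainerSchindl14} \emph{indirectly}, by showing that all three conditions are equivalent to stability properties of the spaces $\cB^{\{\om\}}$ (closedness under composition, etc.), building on \cite{FernandezGalbis06}. Your argument for $(1)\Leftrightarrow(2)$ is essentially the Peetre route and is fine.

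For $(2)\Leftrightarrow(3)$ your proposal is a genuinely different, direct approach through the Young conjugate, and it has a real gap. The derivative identity you record is correct and does yield the equivalence in the smooth, constant-free case, but what you call ``bookkeeping'' is in fact the entire substance of the proof. You must show that the almost-decrease of $\om(t)/t$ with constant $C$ transfers through conjugation to the almost-increase of $G(s)=\vh^*(s)/s-\log s$ in the precise $\forall x\,\exists y$ form of $(3)$, and conversely; Young conjugation does not respect ``almost'' monotonicity in any automatic way, and a multiplicative constant in front of $\vh$ does not become a harmless shift on the conjugate side. Your appeals to \Cref{lemma4}\eqref{5.10} and \Cref{lem:star} do not close this: the former absorbs a geometric factor $\rh^k$ on the \emph{sequence} into a parameter shift, which is a different mechanism from absorbing the constant $C$ in the hypothesis on $\om$; the latter compares $\om_m$ with $\om_M^\star$ but says nothing about monotonicity. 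Nor can you shortcut by replacing $\om$ with its concave majorant via $(1)$ and arguing exactly, since you would then need to know that condition $(3)$ is invariant under equivalence of weight functions --- itself a nontrivial fact that the cited proof obtains for free from the function-space characterization. As written, your $(2)\Leftrightarrow(3)$ is a correct heuristic but not a proof.
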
 

The equivalence of the first two conditions can be found in \cite{PetzscheVogt84} and is based on \cite[Lemma 1]{Peetre70}. 
The equivalence with the third condition was proved in \cite{RainerSchindl14} building on a result of \cite{FernandezGalbis06}, 
by showing that the conditions are all   
equivalent to several stability properties of the corresponding spaces of ultradifferentiable functions.

\begin{theorem} \label{thm:suffgood} 
  Let $\om$ be a weight function.
  Assume that the associated weight matrix $\fW= \{W^x\}_{x >0}$ satisfies
  \begin{equation} \label{eq:m5}
    \A x>0 \E y>0  : \vt_k^x \lesssim (W^y_k)^{1/k}. 
  \end{equation}
  Then $\om$ is a good weight function if and only if it is equivalent to its least concave majorant.
\end{theorem}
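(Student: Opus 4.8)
The plan is to translate both sides of the equivalence into conditions on the ratio- and root-sequences of the weight matrix and then to connect them through the hypothesis \eqref{eq:m5}. By \Cref{thm:subadditive}, the statement ``$\om$ is equivalent to its least concave majorant'' is the same as condition \eqref{eq:m6}, namely $(w^x_j)^{1/j} \le D\, (w^y_k)^{1/k}$ for $1 \le j \le k$. Using $W^x_k = k!\, w^x_k$ one has the dictionary $\vt^x_k/k = w^x_k/w^x_{k-1}$ and $(W^x_k)^{1/k} = (k!)^{1/k}(w^x_k)^{1/k}$, where $k! \le k^k \le e^k k!$ gives $k/e \le (k!)^{1/k}\le k$. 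Hence \eqref{eq:good} says exactly $w^x_j/w^x_{j-1} \le C\, w^y_k/w^y_{k-1}$ for $j \le k$ (a statement about \emph{ratios}), \eqref{eq:m6} is the corresponding statement about \emph{roots}, and \eqref{eq:m5} becomes the mixed bound $w^x_k/w^x_{k-1} \lesssim (w^y_k)^{1/k}$. So the theorem reduces to showing that, in the presence of \eqref{eq:m5}, the ratio-condition and the root-condition are equivalent.

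Two elementary conversion principles drive the argument. First, passing from a bound on all ratios up to index $j$ to a bound on the $j$-th root is free: if $w^x_i/w^x_{i-1} \le A$ for all $i \le j$, then $w^x_j = \prod_{i\le j} w^x_i/w^x_{i-1} \le A^j$, so $(w^x_j)^{1/j} \le A$. Second, the reverse pointwise comparison is built from \eqref{mucompare}: since each $W^x$ is a log-convex weight sequence, $(W^x_k)^{1/k} \le \vt^x_k$, which via the dictionary and $k/e \le (k!)^{1/k}$ reads
\[
  (w^x_k)^{1/k} \le e\, \frac{w^x_k}{w^x_{k-1}}.
\]
This bounds the root by the ratio for free; the opposite bound (ratio by root) is genuinely nontrivial and is precisely what \eqref{eq:m5} supplies.

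For the implication ``good $\Rightarrow$ concave'': given $x$, choose $y$ and $C$ as in \eqref{eq:good}. For $j \le k$ the ratios $w^x_i/w^x_{i-1}$ with $i \le j$ are all $\le C\, w^y_k/w^y_{k-1}$, so the geometric-mean bound gives $(w^x_j)^{1/j} \le C\, w^y_k/w^y_{k-1}$; now apply \eqref{eq:m5} to $y$ to replace the ratio $w^y_k/w^y_{k-1}$ by a constant multiple of $(w^z_k)^{1/k}$. This yields $(w^x_j)^{1/j} \lesssim (w^z_k)^{1/k}$ for $j \le k$, which is \eqref{eq:m6}. For the converse ``concave $\Rightarrow$ good'': given $x$, first use \eqref{eq:m5} to get $w^x_j/w^x_{j-1} \lesssim (w^y_j)^{1/j}$, then \eqref{eq:m6} to get $(w^y_j)^{1/j} \le D\, (w^z_k)^{1/k}$ for $j \le k$, and finally the free root-to-ratio inequality above to pass from $(w^z_k)^{1/k}$ to $e\, w^z_k/w^z_{k-1}$. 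Chaining these gives $w^x_j/w^x_{j-1} \lesssim w^z_k/w^z_{k-1}$ for $j \le k$, which is \eqref{eq:good}.

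I expect the only real obstacle to be bookkeeping of the quantifiers and constants: each application of \eqref{eq:m5} or \eqref{eq:m6} shifts the matrix index ($x \rightsquigarrow y \rightsquigarrow z$) and introduces a multiplicative constant, so the composed statements must be checked to retain the correct ``$\forall x\,\exists y$'' shape of \eqref{eq:good} and \eqref{eq:m6}. The conceptual content is entirely in the observation that \eqref{eq:m5} is exactly the missing ratio-by-root bound complementing the automatic root-by-ratio bound from \eqref{mucompare}; once this is isolated, both implications are routine.
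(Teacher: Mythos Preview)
Your proposal is correct and follows essentially the same approach as the paper: both arguments observe that \eqref{mucompare} gives the free bound $(w^x_k)^{1/k}\lesssim \vt^x_k/k$ while \eqref{eq:m5} supplies the converse $\vt^x_k/k\lesssim (w^y_k)^{1/k}$, so that root- and ratio-quantities become interchangeable up to an index shift, whence \eqref{eq:good} and \Cref{thm:subadditive}\eqref{eq:m6} are equivalent. The paper records this as the chain $(w^x_k)^{1/k}\lesssim \vt^x_k/k\lesssim (w^y_k)^{1/k}\lesssim \vt^y_k/k$ and leaves the rest to the reader; your geometric-mean step in the ``good $\Rightarrow$ concave'' direction is a harmless detour, since the pointwise bound $(w^x_j)^{1/j}\le e\,\vt^x_j/j$ from \eqref{mucompare} already delivers the needed inequality without averaging over $i\le j$.
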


\begin{proof}
  By \eqref{eq:m5} and \eqref{mucompare}, for all $x>0$ there exists $y>0$ such that  
  $(W^x_k)^{1/k} \le \vt_k^x \lesssim (W^y_k)^{1/k} \le \vt_k^y$ and consequently,
  $$(w^x_k)^{1/k} \lesssim \frac{\vt_k^x}{k} \lesssim (w^y_k)^{1/k} \lesssim \frac{\vt_k^y}{k}.$$
  Then clearly the conditions \eqref{eq:good} and \ref{thm:subadditive}\eqref{eq:m6} are equivalent. 
\end{proof}

\begin{remark}
  Note that \eqref{eq:m5} is not invariant under equivalence of weight functions, in contrast to the three equivalent conditions 
  in \Cref{thm:subadditive}; compare with \Cref{rem:good}(2).
\end{remark}

\begin{corollary}
  Let $\om$ be a non-quasianalytic weight function.
  Then $\ka_\om$, defined in \eqref{eq:scion}, is a good heir of $\om$ provided that its associated weight matrix satisfies 
  \eqref{eq:m5}.
\end{corollary}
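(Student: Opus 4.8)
The plan is to invoke \Cref{thm:suffgood} directly, exploiting the concavity of $\ka_\om$ to settle the nontrivial direction of its criterion for free. First I would isolate what actually has to be checked. By definition $\ka_\om$ is a \emph{good heir} of $\om$ precisely when it is both a heir of $\om$ and a good weight function. The heir property is already established above: $\ka_\om$ is a weight function with $\ka_\om(t) = o(t)$ as $t \to \infty$, and it trivially satisfies $\ka_\om(t) = O(\ka_\om(t))$, so condition \eqref{eq:heir} holds (indeed it was already remarked that $\ka_\om$ is a heir of $\om$). Consequently the entire content of the corollary reduces to showing that $\ka_\om$ is a good weight function.

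Next I would apply \Cref{thm:suffgood} to the weight function $\ka_\om$ in place of $\om$. This is legitimate because $\ka_\om$ is itself a weight function and, by the standing hypothesis of the corollary, its associated weight matrix satisfies \eqref{eq:m5}. The theorem then yields the equivalence: $\ka_\om$ is a good weight function if and only if $\ka_\om$ is equivalent to its least concave majorant.

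It remains to verify that $\ka_\om$ is equivalent to its least concave majorant, and this is where the argument becomes trivial rather than delicate: $\ka_\om$ is concave (as recorded above, following \cite[Proposition 1.3]{MeiseTaylor88}), so it coincides with its own least concave majorant and is thus a fortiori equivalent to it. Hence the criterion supplied by \Cref{thm:suffgood} is automatically satisfied, and $\ka_\om$ is good. Combined with the heir property noted in the first step, this shows that $\ka_\om$ is a good heir of $\om$. I do not expect any genuine obstacle here: once \eqref{eq:m5} is assumed, the concavity of $\ka_\om$ makes the application of \Cref{thm:suffgood} immediate, and the only point requiring care is to make the reduction to goodness and the identification of the least concave majorant of a concave function fully explicit.
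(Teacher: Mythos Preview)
Your argument is correct and is precisely the intended one: the paper states this corollary without proof because it is immediate from \Cref{thm:suffgood} together with the already-recorded facts that $\ka_\om$ is a concave weight function (hence equal to its least concave majorant) and a heir of $\om$. There is nothing to add.
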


This raises the following question.

\begin{question} \label{q:concave}
  Is every concave weight function equivalent to a good one?   
\end{question}

A strong weight function $\om$ is equivalent to the concave weight function $\ka_\om$. We will discuss the 
relation between strong and good weight functions in \Cref{sec:strong}. 

\begin{remark}
For the sake of completeness we remark that \eqref{eq:m5} amounts to the following condition on the secants of $\vh^*$:
\begin{equation*}
	\A x>0 \E y >0 \E C>0 \A k \in \N_{>0} : 
	\frac{\vh^*(xk) - \vh^*(xk-x)}{x} \le \frac{\vh^*(yk)}{yk} + C.	 	
\end{equation*}	
A weight function $\om$ is good if and only if
\begin{align*}
	\A x>0 \E y >0 &\E C>0 \A 1 \le j \le k : 
	\notag \\
	\log k - \log j &\le \frac{\vh^*(yk) - \vh^*(yk-y)}{y} - \frac{\vh^*(xj) - \vh^*(xj-x)}{x} + C. 	 	
\end{align*}
\end{remark}

\section{A convenient partition of unity} \label{sec:partition}

In this section we construct a special partition of unity which will be a cornerstone for the extension theorem.
The construction is based on a result of \cite{BBMT91}.

\subsection{Special bump functions} 

The following proposition is due to \cite{BBMT91} in the case that $\om$ is a strong concave weight function and 
$\si = \om$. The proof of the general case (with $\si \ne \om$) requires some slight modifications of the original proof 
of \cite{BBMT91}. We recall the main steps and detail the passages,  
where a transition from $\om$ to $\si$ occurs.

In this section $\fW = \{W^x\}_{x>0}$ will always be the weight matrix associated with the weight function $\om$. 

\begin{proposition}\label{proposition22BBMTWhitney}
Let $\om$ be a non-quasianalytic concave weight function and let $\si$ be a heir of $\om$. 
Then for each $n\in \N_{>0}$ there exist $m\in\N_{>0}$, $M>0$, and $0<r_0<1/2$ 
such that for all $0<r<r_0$ there are functions $f_{n,r}\in C^\infty(\RR)$ satisfying the following properties:
\begin{gather}\label{proposition22BBMTWhitneyequ}
0\le f_{n,r}\le 1,\quad 
\supp f_{n,r} \subseteq \big[-\tfrac{9}{8}r,\tfrac{9}{8}r \big],\quad 
f_{n,r}|_{[-r,r]} =1, 
\\
\label{proposition22BBMTWhitneyequ1}
\sup_{x\in\RR, \, j \in \N} \frac{|f^{(j)}_{n,r}(x)|}{W^m_j}\le M\exp\Big(\frac{1}{n}\si^{\star}(nr)\Big).
\end{gather}
The proof will show that $m=cn$ for some $c\in\NN_{>0}$ independent of $n$.
\end{proposition}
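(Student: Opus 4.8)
The plan is to reproduce the construction of \cite{BBMT91} essentially verbatim, observing that it splits into a purely geometric part, insensitive to the weights and yielding \eqref{proposition22BBMTWhitneyequ}, and an analytic part, the derivative estimate \eqref{proposition22BBMTWhitneyequ1}, which is the only place where $\om$ and $\si$ enter. Following \cite{BBMT91} I would realize $f_{n,r}$ as an infinite convolution $f_{n,r}=\chi_{[-a,a]}*\phi_1*\phi_2*\cdots$, where each $\phi_k=\tfrac{1}{2\de_k}\chi_{[-\de_k,\de_k]}$ is a normalized indicator and the widths $\de_k>0$ satisfy $\sum_k\de_k=\tfrac{r}{16}$ and $a=\tfrac{17}{16}r$. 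Since each $\phi_k$ is a probability density, the bounds $0\le f_{n,r}\le 1$ together with $f_{n,r}|_{[-r,r]}=1$ and $\supp f_{n,r}\subseteq[-\tfrac98 r,\tfrac98 r]$ follow immediately from $a-\sum_k\de_k=r$ and $a+\sum_k\de_k=\tfrac98 r$; this settles \eqref{proposition22BBMTWhitneyequ} with no reference to the weights.

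The derivative estimate is where the work lies. Differentiating $j$ of the convolution factors, and using that $\phi_k'$ is a measure of total mass $\de_k^{-1}$ while the remaining factors are probability densities, one obtains the standard bound $\|f_{n,r}^{(j)}\|_{L^\infty}\le\prod_{k=1}^j\de_k^{-1}$. The widths are chosen in the manner of \cite{BBMT91}: fix a transition index $K=K(r)$ by requiring the tail $\sum_{k>K}(\vt^m_k)^{-1}\approx\tfrac{r}{32}$, which is possible since $W^m$ is non-quasianalytic and which forces $K(r)\to\infty$ as $r\to 0$; then set $\de_k=(\vt^m_k)^{-1}$ for $k>K$ and distribute the remaining budget $\tfrac{r}{32}$ equally among $\de_1,\dots,\de_K$. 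For $k>K$ the factor $\de_k\vt^m_k=1$ contributes nothing, so with $P(r):=\sup_{j}\big(\prod_{k=1}^j\de_k^{-1}\big)/W^m_j$ one has $\|f_{n,r}^{(j)}\|_{L^\infty}\le P(r)\,W^m_j$ for all $j$, the ``clean'' weight factor $W^m_j$ appearing with a coefficient independent of $j$. It remains only to bound the price $P(r)$.

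The heart of the matter, and the sole place where the transition from $\om$ to $\si$ occurs, is the estimate $P(r)\le M\exp(\tfrac1n\si^\star(nr))$. The defining relation for $K(r)$ ties $r$ to $\om$ through a tail sum of the $(\vt^m_k)^{-1}$, which via \Cref{lemma4} and the relations of \Cref{sec:moreonweights} is of exactly the same nature as the integral $\int_t^\infty \om(u)/u^2\,du$ defining $\ka_\om$; consequently $\log P(r)$ is controlled, up to the constants of \eqref{eq:key1}, by a conjugate expression built from $\ka_\om$, that is by $\ka_\om^\star$. Here the concavity of $\om$ enters, through the inversion formula \eqref{omegaconjugate1} and the subadditivity estimates of \Cref{thm:subadditive} on the secants of the $W^x$. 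In \cite{BBMT91}, where $\om$ is strong and $\si=\om$, one closes the loop with $\ka_\om\le C\om+C$; in the present generality I would instead invoke the heir condition \eqref{eq:heir}, $\ka_\om\le C\si+C$ (equivalently $P_\om=O(\si)$ by \Cref{lem:scionharmonicextension}, which explains conceptually why this is the right hypothesis). Applying \Cref{lem:omsistar} to the pair $(\ka_\om,\si)$—legitimate precisely because $\ka_\om=O(\si)$—yields $\ka_\om^\star(t)\le C\si^\star(t/C)+C$, converting the $\ka_\om^\star$-bound into one in terms of $\si^\star$.

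Finally, the multiplicative and dilation constants produced by these comparisons are absorbed by the free parameter: passing from the $\ka_\om^\star$-scale to the $\si^\star$-scale costs a bounded dilation of the argument and a bounded factor, and both are compensated by enlarging the index $m$, which is exactly the assertion $m=cn$ together with the prefactor $\tfrac1n$ and the scaling $nr$ in \eqref{proposition22BBMTWhitneyequ1}. I expect the main obstacle to be bookkeeping rather than conceptual: one must isolate in the \cite{BBMT91} estimate the single step that invoked strongness, verify that every other step uses only properties of $\om$ and $W^m$ that survive unchanged, and track the dilation constants through \Cref{lem:omsistar} carefully enough that the final exponent is precisely $\tfrac1n\si^\star(nr)$ with $m$ depending linearly on $n$.
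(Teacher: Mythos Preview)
Your proposal rests on a misconception about the BBMT91 construction. The bump functions of \cite{BBMT91} are \emph{not} built by infinite convolution of indicator functions; that is the classical Denjoy--Carleman device (e.g.\ \cite[Theorem~1.3.5]{Hoermander83I} or Bruna's proof in \cite{Bruna80}). The BBMT91 argument, which the paper follows step by step (\Cref{lemma23BBMTWhitney}--\Cref{lemma28BBMTWhitney}), proceeds through complex analysis: one first constructs, via H\"ormander's $\bar\partial$-method, an entire function $F$ with $F(0)=1$ and decay $|F(z)|\le M\exp(r|\Im z|-\om(z)/k)$, using as input a subharmonic function built from the harmonic extension $P_{\om_T}$ of a smoothed $\om$. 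A Paley--Wiener theorem then turns $F$ into the desired $C^\infty$ cut-off. The parameter $T=T(k,r)$ is determined by the equation $\si(T)=Trk/D$, and the $\si^\star$ in the final estimate arises directly from $\si(T)-nrT\le\si^\star(nr)$ evaluated at $s=T$. The heir condition \eqref{eq:heir} enters exactly twice: in \Cref{lemma25BBMTWhitney} to bound $\partial_y P_{\om_T}(x+i)$ by $D\si(T)/T$, and in \Cref{lemma27BBMTWhitney} via $P_\om\le C\si+C$ from \Cref{lem:scionharmonicextension}. None of this involves $\ka_\om^\star$, tail sums of $(\vt^m_k)^{-1}$, or \Cref{lem:omsistar}.

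Whether your convolution route can be made to work is a separate question, but as written there is a real gap. You assert that $\log P(r)$ is ``controlled by a conjugate expression built from $\ka_\om$'', but with $\de_k\approx r/K$ for $k\le K$ one has $P(r)\approx (cK/r)^K/W^m_K$, and the defining relation $\sum_{k>K}(\vt^m_k)^{-1}\approx r$ connects $K$ to $r$ through the sequence $W^m$, not directly through $\ka_\om$ or its conjugate. Passing from this combinatorial quantity to $\ka_\om^\star$ and then to $\si^\star$ with the precise form $\tfrac1n\si^\star(nr)$ is not a matter of bookkeeping; it would require new estimates linking the tail sums of $\vt^m$ to $\ka_\om$ with enough precision to survive the conjugate operation, and none of \eqref{eq:key1}, \eqref{omegaconjugate1}, or \Cref{lem:omsistar} supplies that link. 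The BBMT91 entire-function machinery exists precisely to produce this sharp dependence, which the naive convolution does not.
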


Note that, in \Cref{proposition22BBMTWhitney}, $\si$ need not be a \emph{good} heir of $\om$.

The following two lemmas can be taken without modification from \cite{BBMT91}. 
They are based on H\"ormander's $L^2$-method to construct entire functions and on a Paley--Wiener theorem.

\begin{lemma}[{\cite[Lemma 2.3]{BBMT91}}]\label{lemma23BBMTWhitney}
Let $\omega$ be a non-quasianalytic weight function. Then there exists $A>0$ such that for each $0<r\le 1$, each $k\in\NN$, 
and each subharmonic function $u$ on $\C$ satisfying
\begin{equation*} 
u(z)\le r|\Im(z)|-\frac{\omega(z)}{k} \quad \text{ for all } z \in \C, 
\end{equation*}
there exists an entire function $F$ on $\C$ with $F(0)=1$ and
\begin{equation*} 
|F(z)|\le A\exp\Big(r|\Im(z)|-\frac{\omega(z)}{k}+3\log(1+|z|^2)\Big)\sup_{|w|\le 1}\exp(-u(w))
\end{equation*}
for all $z \in \C$.
\end{lemma}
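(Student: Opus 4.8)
The plan is to realize $F$ as a correction of a smooth cutoff by the Hörmander $L^2$-solution of a $\bar\partial$-equation, so that entireness, the normalization $F(0)=1$, and the growth bound all fall out of a single weighted estimate. I would fix once and for all a function $\chi\in C^\infty_c(\C)$ with $\chi\equiv 1$ on $\{|z|\le \tfrac12\}$ and $\supp\chi\subseteq\{|z|\le\tfrac34\}$, and set $v:=\bar\partial\chi$. Then $v$ is bounded by an absolute constant, is supported in the annulus $\{\tfrac12\le|z|\le\tfrac34\}$, and vanishes near $0$. Solving $\bar\partial g=v$ and putting $F:=\chi-g$ gives $\bar\partial F=0$, so $F$ is entire. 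One may assume $\inf_{|w|\le 1}u>-\infty$, since otherwise $\sup_{|w|\le 1}e^{-u(w)}=+\infty$ and the asserted inequality is vacuous.

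The core step is the one-variable Hörmander estimate applied with the subharmonic weight $\phi(z):=2u(z)+2\log|z|$ (a sum of the subharmonic $u$ and the subharmonic $\log|z|$): there is a solution $g$ of $\bar\partial g=v$ with
\[
  \int_\C \frac{|g(z)|^2}{(1+|z|^2)^2}\,e^{-2u(z)}|z|^{-2}\,d\lambda(z)\le \int_\C |v(z)|^2 e^{-2u(z)}|z|^{-2}\,d\lambda(z).
\]
Since $v$ lives in the annulus, where $|z|^{-2}\le 4$ and $e^{-2u}\le \sup_{|w|\le 1}e^{-2u(w)}$, the right-hand side is at most $C_0\big(\sup_{|w|\le 1}e^{-u(w)}\big)^2$ for an absolute constant $C_0$; this is exactly where the normalizing factor in the conclusion originates. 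The logarithmic term in $\phi$ does double duty: near $0$, where $v=0$ forces $g$ to be holomorphic and $u$ is bounded, the weight behaves like $|z|^{-2}$, so finiteness of the left-hand integral forces $g(0)=0$, whence $F(0)=\chi(0)-g(0)=1$.

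It then remains to convert the weighted $L^2$-bound on $g$ into the claimed pointwise bound on $F$. For $|z_0|\ge 2$ we have $\chi(z_0)=0$, so $F=-g$ there, and $g$ is holomorphic on $D(z_0,1)$, which avoids $\supp v$. Applying the sub-mean-value inequality to the subharmonic function $|g|^2$ and reinserting the weight gives
\[
  |g(z_0)|^2\le \frac{1}{\pi}\int_{D(z_0,1)}|g|^2\,d\lambda
  \le \frac{C_0}{\pi}\Big(\sup_{D(z_0,1)}(1+|z|^2)^2|z|^2e^{2u(z)}\Big)\big(\sup_{|w|\le 1}e^{-u(w)}\big)^2.
\]
Using $u(z)\le r|\Im z|-\om(z)/k$ to estimate the supremum, the polynomial factors get absorbed into $(1+|z_0|^2)^3$, the factor $e^{2r|\Im z|}$ into $A\,e^{2r|\Im z_0|}$ (as $r\le 1$), and the decay into $e^{-2\om(z_0)/k}$ up to a bounded multiplicative constant. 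For the bounded region $|z_0|\le 2$ the bound is immediate, since $\chi$ is bounded and $g$ is controlled there. Taking square roots and collecting constants yields the inequality with a constant $A$ depending only on $\chi$ and $\om$, hence independent of $r$, $k$, and $u$.

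The main obstacle is precisely this last conversion: reproducing the growth weight with the sharp argument $\om(z)/k$ at the point $z$ itself. Over a fixed-radius mean-value disk the estimate $u\le r|\Im z|-\om(z)/k$ only controls $e^{-2\om(|z_0|-1)/k}$, so one must check that $\om(|z_0|)-\om(|z_0|-1)$ stays bounded uniformly in $z_0$ and independently of $k$; this is where the regularity of the weight function — the doubling property \eqref{om1}, the bound \eqref{om2}, and, in the concave setting in which the lemma is applied, the concavity of $\om$ — enters, and where one must be careful to keep every constant uniform in the data $r,k,u$.
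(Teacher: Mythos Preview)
The paper does not prove this lemma itself; it quotes \cite[Lemma~2.3]{BBMT91} verbatim and only remarks that the argument is ``based on H\"ormander's $L^2$-method.'' Your proposal is exactly that standard construction --- a fixed cutoff, the $\bar\partial$-equation solved against the subharmonic weight $2u(z)+2\log|z|$ so that the logarithmic pole forces $g(0)=0$, then a sub-mean-value argument to convert the weighted $L^2$ bound into a pointwise one --- so it matches the intended approach.

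Regarding the obstacle you isolate at the end: the estimate $\om(t)-\om(t-1)\le C$ for $t\ge 2$ holds for \emph{every} weight function in the sense of Section~\ref{weightfunction}, using only \eqref{om2} and \eqref{om4}; neither \eqref{om1} nor concavity is required. Indeed, convexity of $\vh(s)=\om(e^s)$ gives, for any $c$,
\[
\vh'_-(c)\le \vh(c+1)-\vh(c)\le \vh(c+1)=\om(e^{c+1})\le C'e^{c+1}
\]
for large $c$, by \eqref{om2} (and $\vh\ge 0$). Using convexity once more,
\[
\om(t)-\om(t-1)=\vh(\log t)-\vh(\log(t-1))\le \vh'_-(\log t)\bigl(\log t-\log(t-1)\bigr)\le C'e\cdot t\cdot \frac{2}{t}=2C'e
\]
for $t\ge 2$. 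With this in hand your $L^2$-to-pointwise conversion goes through uniformly in $r$, $k$, and $u$, and the resulting constant $A$ depends only on $\om$ and on the fixed cutoff $\chi$. (For the region $|z_0|\le 2$, note that $u(0)\le 0$ forces $\sup_{|w|\le 1}e^{-u(w)}\ge 1$, so the additive contribution of $\chi$ is absorbed into the multiplicative bound.)
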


\begin{lemma}[{\cite[Lemma 2.4]{BBMT91}}]\label{lemma24BBMTWhitney}
Let $\omega$ be a non-quasianalytic weight function.  
There exists $L\in\NN_{>0}$ such that for each $k\in\NN_{>0}$ there exists 
$B>0$ such that for all $0<r<1/2$ the following holds. 
If there is an entire 
function $F$ with $F(0)=1$ such that 
\begin{equation*}
\E M>0 \A z\in\C : |F(z)|\le M\exp\Big(r|\Im(z)|-\frac{\omega(z)}{k}\Big),
\end{equation*}
then there exists $\psi \in \cB^{\{\omega\}}(\RR)$ with the following properties:
\begin{gather*}
0\le\psi\le 1,\quad \psi(x)=0 \text{ for } x\le -r,\quad \psi(x)=1 \text{ for } x\ge r,
\\
\sup_{x\in\RR,\, j\in\NN}\frac{|\psi^{(j)}(x)|}{W^{2Lk}_j}\le BM^2.
\end{gather*}
\end{lemma}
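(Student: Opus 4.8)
The plan is to produce $\psi$ as the normalized antiderivative of a nonnegative bump function manufactured from $F$ via the Paley--Wiener correspondence, and then to extract the derivative bounds from the decay of $F$ on the real axis.

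First I would apply the Paley--Wiener theorem. The hypothesis $|F(z)|\le M\exp(r|\Im z|-\om(z)/k)$ says that $F$ is entire of exponential type $r$ and, since $\log t=o(\om(t))$ by \eqref{om3}, decays on $\R$ faster than any polynomial. Hence the inverse transform
\[
  f(t):=\frac1{2\pi}\int_\R F(\xi)\,e^{it\xi}\,d\xi
\]
is a smooth function supported in $[-r,r]$, with $F(\xi)=\int_\R f(t)e^{-i\xi t}\,dt$ and in particular $F(0)=\int_\R f=1$. Put $\chi:=|f|^2=f\bar f\ge0$; it is again smooth and supported in $[-r,r]$. With $I:=\int_\R\chi>0$ I define
\[
  \psi(x):=\frac1I\int_{-\infty}^x\chi(t)\,dt .
\]
Since $\chi\ge0$ is supported in $[-r,r]$ with $\int_\R\chi=I$, the function $\psi$ is nondecreasing, $0\le\psi\le1$, $\psi\equiv0$ on $(-\infty,-r]$ and $\psi\equiv1$ on $[r,\infty)$; this is exactly the first line of the asserted conclusions.

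The derivative estimate hinges on one inequality. Differentiating under the integral sign,
\[
  |f^{(j)}(x)|\le\frac1{2\pi}\int_\R|\xi|^j|F(\xi)|\,d\xi\le\frac{M}{2\pi}\int_\R|\xi|^je^{-\om(|\xi|)/k}\,d\xi .
\]
Writing $e^{-\om/k}=e^{-\om/(2k)}e^{-\om/(2k)}$ and using that, by the definition of $W^{2k}$ and of $\vh^*$,
\[
  \sup_{t\ge0}t^je^{-\om(t)/(2k)}=\exp\big(\tfrac1{2k}\vh^*(2kj)\big)=W^{2k}_j,
\]
together with the finiteness of $D_k:=\int_\R e^{-\om(|\xi|)/(2k)}\,d\xi$ (again via \eqref{om3}), I obtain $|f^{(j)}(x)|\le\frac{MD_k}{2\pi}\,W^{2k}_j$ for all $j$ and $x$. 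Leibniz' rule and the supermultiplicativity $W^{2k}_iW^{2k}_{j-i}\le W^{2k}_j$, valid because $W^{2k}$ is a log-convex weight sequence (\Cref{lemma4}(1) and \Cref{lem:basicM}(2)), then give $|\chi^{(j)}|\le(\tfrac{MD_k}{2\pi})^2\,2^j\,W^{2k}_j$. Finally I absorb the factor $2^j$ with \Cref{lemma4}\eqref{5.10} for $\rho=2$: this produces $L:=H\in\N_{>0}$ \emph{independent of $k$} and a constant $C=C(k)$ with $2^jW^{2k}_j\le C\,W^{2Lk}_j$, so $|\chi^{(j)}|\le B'M^2W^{2Lk}_j$. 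Because $\psi^{(j)}=\chi^{(j-1)}/I$ for $j\ge1$, $W^{2Lk}_{j-1}\le W^{2Lk}_j$, and (see below) $I^{-1}\le 2r<1$, this yields $\sup_{x,j}|\psi^{(j)}(x)|/W^{2Lk}_j\le BM^2$, the case $j=0$ being covered by $|\psi|\le1=W^{2Lk}_0$; in particular $\psi\in\cB^{\{\om\}}(\R)$.

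The step I expect to need the most care is the lower bound on $I$ controlling the normalization $1/I$, and this is where $F(0)=1$ is used. Since $F(0)=\int_{-r}^rf=1$, Cauchy--Schwarz gives $1=\big|\int_{-r}^rf\big|^2\le 2r\int_{-r}^r|f|^2=2r\,I$, whence $I\ge\frac1{2r}$ and $I^{-1}\le2r$. The remaining points are bookkeeping: fixing the Fourier normalization so that $F(0)=1$ translates to $\int f=1$, and keeping track that $L$ emerges independent of $k$ while $B$ may depend on $k$ through $D_k$ and the constant in \Cref{lemma4}\eqref{5.10}.
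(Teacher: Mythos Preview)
Your argument is correct and follows precisely the Paley--Wiener approach of \cite[Lemma 2.4]{BBMT91}; note that the present paper does not reprove this lemma but merely cites it, so there is no alternative proof to compare against. The only cosmetic points are that $L=H$ from \Cref{lemma4}\eqref{5.10} need only satisfy $H\ge1$, so one should take $L:=\lceil H\rceil\in\N_{>0}$ (harmless by \Cref{lemma4}\eqref{lemma4(2)}), and that $M\ge1$ follows automatically from $|F(0)|=1\le M$, which justifies the $j=0$ case.
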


Next we generalize \cite[Lemma 2.5]{BBMT91}. 
For $T>1$ we define $\om_T:\RR\rightarrow[0,\infty)$ by
\begin{equation}\label{lemma25BBMTWhitneyequ}
  \om_T(t) := \begin{cases}
    \om(t) & \text{ if } |t| \ge T, \\
    \frac{\om'(T)}{2T}t^2-\frac{\om'(T)}{2}T+\om(T) &  \text{ if } |t| \le T. 
  \end{cases}
\end{equation}

\begin{lemma}\label{lemma25BBMTWhitney}
Let $\omega$ be a non-quasianalytic concave weight function and let $\si$ be a heir of $\om$.
Then there is a $D>0$ such that for all $T > 1$,
\begin{equation}\label{lemma25BBMTWhitneyequ1}
 \sup_{x\in\R}\frac{\partial}{\partial y}P_{\om_T}(x+i)\le D\, \frac{\si(T)}{T}.
\end{equation}
\end{lemma}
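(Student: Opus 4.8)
The plan is to estimate the normal derivative of the harmonic extension $P_{\om_T}$ at height $y=1$ directly from the Poisson integral. Writing out the Poisson kernel for the upper half plane and differentiating in $y$, I would compute
\[
  \frac{\p}{\p y} P_{\om_T}(x+iy) = \frac{1}{\pi}\int_\R \om_T(t)\,\frac{\p}{\p y}\Big(\frac{y}{(t-x)^2+y^2}\Big)\,dt,
\]
and evaluate at $y=1$. The key point is that the $y$-derivative of the Poisson kernel at $y=1$ is an explicit rational function of $(t-x)$ that is integrable against the linearly-bounded weight $\om_T(t)$, so the supremum over $x\in\R$ can be reduced, after the translation $t\mapsto t+x$, to a single integral involving $\om_T$ near the origin and its linear tail at infinity.

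First I would record the elementary properties of $\om_T$ from \eqref{lemma25BBMTWhitneyequ}: it agrees with $\om$ for $|t|\ge T$, is a parabola on $[-T,T]$ chosen so that $\om_T$ and $\om_T'$ match $\om$ and $\om'$ at $t=T$, and in particular $\om_T$ is $C^1$, even, and (using concavity of $\om$) still controlled by its linear behaviour. Because $\om$ is concave with $\om(0)=0$ we have $\om'(T)\le \om(T)/T$, so the parabolic cap satisfies $\om_T(t)\le \om(T)$ on $[-T,T]$ and more usefully $\om_T(t) = O\big(\tfrac{\om(T)}{T^2}t^2 + \om(T)\big)$ there. The strategy is then to split the defining integral into the region $\{|t|\le T\}$, where $\om_T$ is the parabola, and $\{|t|>T\}$, where $\om_T=\om$ and $\om$ grows at most linearly by \eqref{om2}.

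Next I would carry out the two estimates. On $\{|t|>T\}$ the factor $\om(t)$ is $O(|t|)$ while the differentiated kernel decays like $|t|^{-3}$ for large $|t|$ (uniformly in $x$ after translation), so this contribution is bounded by a constant times $\int_T^\infty \tfrac{u}{u^3}\,du$-type tails together with the value $\om(T)$ near $|t|=T$; invoking the heir relation \eqref{eq:heir}, which says $\ka_\om\le C\si+C$, and \Cref{lem:scionharmonicextension} relating $\ka_\om$ to $P_\om$, lets me convert such $\om$-integrals into a bound of the desired form $\si(T)/T$. On $\{|t|\le T\}$ the parabola contributes, after scaling $t=Ts$, an integral of the shape $\tfrac{\om(T)}{T}\int_{-1}^1(\cdots)\,ds$ against a kernel that at height $1$ is essentially $O(T^{-2})$-smeared, again yielding $O(\om(T)/T)$; and since $\om\le\ka_\om\le C\si+C$ gives $\om(T)/T\lesssim \si(T)/T$ up to the additive constant, both pieces combine to the claimed bound with a uniform $D$.

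\textbf{The main obstacle} I expect is making the tail estimate on $\{|t|>T\}$ genuinely uniform in both $x$ and $T$: the differentiated Poisson kernel at $y=1$ does not decay fast enough on its own to absorb the linear growth of $\om$ without exploiting the non-quasianalyticity of $\om$ through $\ka_\om$. The cleanest route is to recognize that $\tfrac{\p}{\p y}P_{\om_T}(x+i)$ is itself comparable to a difference/average of harmonic extensions of $\om_T$, so that I can bound it by $P_{\om_T}(x+i)\lesssim P_\om(x+i)+O(\om(T)/T)$ and then apply the equivalence $P_\om(t)=O(\si(t))$ from \Cref{lem:scionharmonicextension} together with \eqref{eq:heir}. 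Verifying that replacing $\om$ by the capped $\om_T$ only costs the harmless term $O(\om(T)/T)$—because the two functions differ only on $[-T,T]$ where both are $O(\om(T))$—is the delicate bookkeeping step, but it is exactly the kind of modification that \cite{BBMT91} carried out in the case $\si=\om$, and the concavity of $\om$ is what guarantees the parabolic cap stays below the linear envelope so the bound propagates.
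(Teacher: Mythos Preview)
Your overall plan---differentiate the Poisson kernel, split according to where $\om_T$ is parabolic versus where it equals $\om$, and invoke the heir relation \eqref{eq:heir} to pass from an $\om$-integral to $\si(T)/T$---is the right shape, and it is essentially what \cite{BBMT91} does. The paper's own proof simply points to \cite[Lemma~2.5]{BBMT91} and notes that the single use of the strong condition $\int_1^\infty \om(tu)u^{-2}\,du\le C\om(t)+C$ there is replaced by \eqref{eq:heir}.

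However, two of your computational claims would not survive. First, the $y$-derivative of the Poisson kernel at $y=1$ is
\[
\frac{\partial}{\partial y}\Big|_{y=1}\frac{y}{(t-x)^2+y^2}=\frac{(t-x)^2-1}{((t-x)^2+1)^2},
\]
which decays like $|t-x|^{-2}$, not $|t-x|^{-3}$. Paired with the linear growth of $\om$, your tail estimate $\int_T^\infty u\cdot u^{-3}\,du$ is not what arises; the naive bound gives a logarithmically divergent integral and one must exploit more structure. What actually works is to use the antiderivative $\frac{s^2-1}{(s^2+1)^2}=\frac{d}{ds}\big(\tfrac{-s}{s^2+1}\big)$ to integrate by parts and pass to $\om_T'$. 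Then the concavity bound $\om'(t)\le\om'(T)\le\om(T)/T$ for $t\ge T$ (and the explicit linear form of $\om_T'$ on $[-T,T]$) controls the near part, while on $\{|t|>T\}$ a further integration by parts produces precisely $T^{-1}\int_T^\infty \om(t)t^{-2}\,dt=\ka_\om(T)/T$, which is where \eqref{eq:heir} enters to give $D\si(T)/T$.

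Second, your ``cleanest route'' at the end cannot close: bounding $\partial_y P_{\om_T}(x+i)$ by $P_{\om_T}(x+i)$ and then invoking $P_\om(z)=O(\si(z))$ from \Cref{lem:scionharmonicextension} yields a bound of order $\si(|x+i|)$, which depends on $x$ and is not uniformly $O(\si(T)/T)$. The lemma demands a bound uniform in $x\in\R$ and governed only by $T$; this forces you to use the specific $T$-dependent truncation of $\om_T$ and the concavity of $\om$ rather than comparison with the full harmonic extension $P_\om$.
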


\begin{proof}
It suffices to follow the proof of \cite[Lemma 2.5]{BBMT91} and replace the use of the estimate 
$\int_1^\infty \frac{\om(tu)}{u^2}\, du  \le C \om(t) + C$ 
(that is \cite[1.7(1)]{BBMT91}) by the estimate \eqref{eq:heir}.
\end{proof}

Let $\omega, \om_T$ be as in \Cref{lemma25BBMTWhitney} and \eqref{lemma25BBMTWhitneyequ}. 
We consider $h_T:\C\rightarrow\RR$ given by
\begin{equation*} 
h_T(z):= \begin{cases}
  P_{\om_T}(z+i) & \text{ if } \Im(z) \ge 0,\\
  P_{\om_T}(z-i) & \text{ if } \Im(z) < 0.
\end{cases}
\end{equation*}
If $\om_T$ is replaced by $\om$, then we will write $h$ for the corresponding function.
By the symmetry of $\om_T$ and of the Poisson kernel,
$h_T$ is continuous on $\C$. We have 
\begin{equation} \label{eq:hTh}
	h_T(z) - \om (T) \le h(z) \le h_T(z) \quad \text{ for all } z \in \C, ~T>1, 
\end{equation}
by \cite[2.7(2)]{BBMT91}. 
The following generalizes \cite[Lemma 2.7]{BBMT91}.

\begin{lemma}\label{lemma27BBMTWhitney}
Let $\omega$ be a non-quasianalytic concave weight function and let $\si$ be a heir of $\om$.
Then there exist $E,F,G>0$ such that for all $T>1$ and all $z \in \C$, 
\begin{equation}\label{lemma27BBMTWhitneyequ}
E^{-1}h_T(z)-F\om(T)\le\si(z),\quad \om(z)\le h_T(z)+G.
\end{equation}
\end{lemma}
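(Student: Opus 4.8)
The plan is to reduce everything to the $T$-independent function $h$ and then follow the scheme of \cite[Lemma 2.7]{BBMT91}, with the single modification that in the upper bound the strongness of $\om$ (which in the original forces $\ka_\om \lesssim \om$) is replaced by the heir property \eqref{eq:heir}, i.e.\ $\om \le \ka_\om \le C\si + C$. By \eqref{eq:hTh} we have $h(z) \le h_T(z) \le h(z) + \om(T)$ for all $z$ and all $T>1$. Hence the first asserted inequality follows once we prove $h(z) \le A\,\si(|z|) + B$ for suitable $A,B>0$: then $E^{-1}h_T - F\om(T) \le E^{-1}h + (E^{-1}-F)\om(T)$ and the constants get absorbed into $-F\om(T)$; the second inequality follows once we prove $\om(|z|) \le h(z) + G$.

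For the second inequality I would argue directly from $\om \le P_\om$ and the concavity of $\om$. Take $\Im z \ge 0$, so $h(z) = P_\om(z+i)$. Since $\om \le P_\om$ on $\C$ we get $\om(|z+i|) \le P_\om(z+i) = h(z)$. As $\om$ is concave with $\om(0)=0$ it is subadditive, whence, using $|z| \le |z+i|+1$,
\[
  \om(|z|) \le \om(|z+i|+1) \le \om(|z+i|) + \om(1) \le h(z) + \om(1).
\]
The case $\Im z <0$ is identical with $z-i$ in place of $z+i$, so $G = \om(1)$ (or any larger positive constant) works.

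The heart of the matter is the upper bound $h(z) \le A\,\si(|z|) + B$. Write $z = x+iy$ and set $b := 1 + |y| \ge 1$, so that $h(z) = P_\om(x \pm i b) = \frac{b}{\pi}\int_\R \frac{\om(|t|)}{(t-x)^2 + b^2}\,dt$. Using subadditivity $\om(|t|) \le \om(|t-x|) + \om(|x|)$ and that the Poisson kernel integrates to $1$, the $\om(|x|)$ part contributes exactly $\om(|x|)$, and the remaining part, after substituting $s=t-x$, is $P_\om(ib)$; thus $h(z) \le \om(|x|) + P_\om(ib)$. I then bound $P_\om(ib) = \frac{2b}{\pi}\int_0^\infty \frac{\om(s)}{s^2+b^2}\,ds$ by splitting the integral at $s=b$: monotonicity of $\om$ gives $\int_0^b \frac{\om(s)}{s^2+b^2}\,ds \le \frac{\om(b)}{b}$, while $\int_b^\infty \frac{\om(s)}{s^2+b^2}\,ds \le \int_b^\infty \frac{\om(s)}{s^2}\,ds = \frac{\ka_\om(b)}{b}$, so that $P_\om(ib) \le \frac{2}{\pi}\big(\om(b) + \ka_\om(b)\big)$.

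Finally I invoke the heir property $\om \le \ka_\om \le C\si + C$ from \eqref{eq:heir}, which bounds $\om(|x|) \le C\si(|x|) + C$ and $\om(b)+\ka_\om(b) \le C'\si(b) + C'$. Since $|x| \le |z|$ and $b = 1+|y| \le 2|z|$ for $|z|\ge 1$, the doubling property \eqref{om1} of $\si$ yields $\si(|x|),\si(b) \lesssim \si(|z|)$, while for $|z|\le 1$ every quantity is bounded by an absolute constant; collecting terms gives $h(z) \le A\,\si(|z|)+B$. Plugging back and choosing $E \ge A$ together with $F$ large enough that $(F-E^{-1})\om(T) \ge E^{-1}B$ for all $T>1$ (using that $\om(T)$ is bounded below by a positive constant for $T>1$) yields $E^{-1}h_T(z) - F\om(T) \le \si(|z|)$. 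The main obstacle is exactly this interior estimate for the harmonic extension and its reduction, via subadditivity and the splitting of the radial Poisson integral at the scale $b$, to the heir condition \eqref{eq:heir}; this is precisely where the passage from $\om$ to $\si$ enters and supplants the strongness used in \cite{BBMT91}.
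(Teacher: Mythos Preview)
Your argument is correct. The second inequality via subadditivity of the concave $\om$ is clean, and your direct estimate $h(z)\le \om(|x|)+P_\om(ib)$ followed by the splitting of the radial Poisson integral at scale $b$ correctly reduces the upper bound to the heir relation $\ka_\om \le C\si+C$. The closing constant-absorption step is fine because concavity with $\om(0)=0$ and $\om\not\equiv 0$ forces $\om(1)>0$, so $\om(T)\ge \om(1)>0$ for $T>1$.

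The paper proceeds differently and more briefly. It quotes from \cite[Lemma~2.7]{BBMT91} the uniform bound $|P_\om(z+w)-P_\om(z)|\le G$ for $|w|\le 1$. This single estimate yields both inequalities at once: for the second, $\om(z)\le P_\om(z)\le P_\om(z\pm i)+G=h(z)+G\le h_T(z)+G$; for the first, $h_T(z)-\om(T)-G\le h(z)-G\le P_\om(z)$, and then the paper invokes \Cref{lem:scionharmonicextension} (i.e.\ \cite[Lemma~3.3]{BonetMeiseTaylor92}) to get $P_\om(z)\le C\si(z)+C$ directly. So where you compute the Poisson integral by hand and essentially rederive a pointwise form of \Cref{lem:scionharmonicextension}, the paper just cites it. Your route is more self-contained and makes the dependence on concavity and the heir condition fully explicit; the paper's route is shorter and modular, leaning on the already-established equivalence $\ka_\om(t)=O(\si(t))\Leftrightarrow P_\om(t)=O(\si(t))$ and on the Lipschitz-type control of $P_\om$ under bounded shifts.
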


\begin{proof}
The proof of \cite[Lemma 2.7]{BBMT91} yields that there exists $G>0$ such that 
\[
	|P_\om(z+w) - P_\om(z)| \le G \quad \text{ for all } z,w \in \C, ~ |w|\le 1.
\]
Together with \eqref{eq:hTh} this implies
\[
	\om(z) \le P_\om(z) \le P_\om(z+i) + G = h(z) + G \le h_T(z) + G,
\]
if $\Im (z) \ge 0$, 
and similarly for $\Im (z) <0$. This gives the second inequality in \eqref{lemma27BBMTWhitneyequ}.

For the first inequality note that $P_\om \le C \si + C$, by \Cref{lem:scionharmonicextension}.
Then, by \eqref{eq:hTh},  
\[ 
h_T(z) - \om(T) - G \le h(z) - G \le P_\om(z) \le C \si(z) + C, 
\]
which easily implies the first inequality in \eqref{lemma27BBMTWhitneyequ}. 
\end{proof}

Now we generalize \cite[Lemma 2.8]{BBMT91}.

\begin{lemma}\label{lemma28BBMTWhitney}
Let $\omega$ be a non-quasianalytic concave weight function and let $\si$ be a heir of $\om$.
Then for each $n\in\NN_{>0}$ there exist $m\in\NN_{>0}$, $M>0$ and $0<r_0<1/2$ such that for all $0<r<r_0$ 
there are functions  
$g_{n,r}\in C^\infty(\R)$ satisfying the following properties:
\begin{gather}\label{lemma28BBMTWhitneyequ}
0\le g_{n,r}\le 1,\quad g_{n,r}=0 \text{ for } x\le -r,\quad g_{n,r}(x)=1 \text{ for } x\ge r,
\\
\label{lemma28BBMTWhitneyequ1}
\sup_{x\in\R,\, j\in\NN}\frac{|g_{n,r}^{(j)}(x)|}{W^m_j}\le M\exp\Big(\frac{1}{n}\si^{\star}(n r)\Big).
\end{gather}
\end{lemma}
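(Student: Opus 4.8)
The plan is to follow the proof of \cite[Lemma 2.8]{BBMT91} step by step, feeding in our generalized \Cref{lemma25BBMTWhitney} and \Cref{lemma27BBMTWhitney} (where the heir $\si$ enters) in place of the original estimates, while \Cref{lemma23BBMTWhitney} and \Cref{lemma24BBMTWhitney} are used verbatim. Given $n \in \NN_{>0}$, I would fix an integer $k$ comparable to $n$; since the cut-off produced by \Cref{lemma24BBMTWhitney} is controlled by the weight sequence $W^{2Lk}$ with $L$ the universal constant of that lemma, this yields $m = 2Lk = cn$ with $c$ independent of $n$. The target function $g_{n,r}$ will be the function $\psi$ delivered by \Cref{lemma24BBMTWhitney}, so the cut-off properties in \eqref{lemma28BBMTWhitneyequ} are automatic; the entire work goes into producing an appropriate entire function $F$ feeding \Cref{lemma24BBMTWhitney} and into estimating its size.

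To build $F$ I would introduce the auxiliary scale $T = T(r) > 1$ (to be optimized at the end) and use the harmonic object $h_T$ attached to the smoothed weight $\om_T$ from \eqref{lemma25BBMTWhitneyequ}. From the second inequality in \eqref{lemma27BBMTWhitneyequ}, namely $\om(z) \le h_T(z) + G$, one constructs, exactly as in \cite{BBMT91}, a subharmonic function $u$ on $\C$ satisfying the hypothesis $u(z) \le r|\Im z| - \om(z)/k$ of \Cref{lemma23BBMTWhitney}; here the vertical derivative bound $\sup_x \partial_y P_{\om_T}(x+i) \le D\,\si(T)/T$ of \Cref{lemma25BBMTWhitney} is what keeps $u$ from being too negative near the origin. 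Applying \Cref{lemma23BBMTWhitney} yields an entire $F$ with $F(0)=1$ and $|F(z)| \le A\exp\big(r|\Im z| - \om(z)/k + 3\log(1+|z|^2)\big)\,\sup_{|w|\le 1}\exp(-u(w))$, so that $F$ meets the hypothesis of \Cref{lemma24BBMTWhitney} with $M = A'\,\sup_{|w|\le 1}\exp(-u(w))$, and \Cref{lemma24BBMTWhitney} returns $g_{n,r}$ with $\sup_{x,j}|g_{n,r}^{(j)}(x)|/W^{2Lk}_j \le B M^2$.

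It remains to bound $M^2$, and this is where $\si$ produces the factor $\exp(\tfrac1n\si^\star(nr))$. The quantity $\sup_{|w|\le 1}\exp(-u(w))$ is estimated through the size of $h_T$ near the origin, which by the first inequality in \eqref{lemma27BBMTWhitneyequ} and by \Cref{lemma25BBMTWhitney} is governed by $\si(T)$ and $\si(T)/T$ rather than by $\om$; optimizing the free parameter $T$ against the prescribed width $r$ turns the resulting tradeoff into precisely the Legendre transform defining $\si^\star$ in \eqref{omegaconjugate}, giving $B M^2 \le M_0\exp(\tfrac1n\si^\star(nr))$ for a suitable $M_0 = M_0(n)$, which is the constant denoted $M$ in the statement. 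The conceptual point, which the plan makes transparent, is the clean separation of roles: the horizontal decay of $F$ is still measured by $\om$, so the output lands in the class governed by the weight matrix $\{W^x\}$ of $\om$ (hence $W^m_j$ in \eqref{lemma28BBMTWhitneyequ1}), whereas the vertical cost of the bump, i.e.\ its height at scale $r$, is measured by $\si$, producing the factor $\exp(\tfrac1n\si^\star(nr))$.

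I expect the main obstacle to be the simultaneous control of $u$: it must satisfy the global upper bound $u(z) \le r|\Im z| - \om(z)/k$ everywhere (which forces the $\om$-side estimate $\om \le h_T + G$) while being large enough on the unit disk that $\sup_{|w|\le 1}\exp(-u(w)) \le \exp(\tfrac1{2n}\si^\star(nr))$ (which forces the $\si$-side estimates and the correct choice $T = T(r)$ realizing the conjugate). Checking that the reflection defining $h_T$ keeps $u$ subharmonic across $\R$, and that all of $m$, $M$, $r_0$ depend only on $n$ and not on $r$, is the technical crux; the delicate part is purely the bookkeeping between the two weights, and notably no goodness of $\si$ is needed here, consistent with the remark following \Cref{proposition22BBMTWhitney}.
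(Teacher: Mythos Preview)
Your approach is the paper's approach, and the architecture you describe is right: build a subharmonic $u$ from $h_T$, feed \Cref{lemma23BBMTWhitney}, then \Cref{lemma24BBMTWhitney}, and read off both the $W^m$-control and the $\si^\star$-factor. Two details deserve correction.

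First, $m = 2Lk$ is off by a factor of two. After \Cref{lemma23BBMTWhitney} the entire function $F$ satisfies a bound containing the extra factor $3\log(1+|z|^2)$; this must be absorbed into the decay term $-\om(z)/k$ via \eqref{om3}, which costs half of the exponent. One is left with $|F(z)| \lesssim \exp(r|\Im z| - \om(z)/(2k))$, so \Cref{lemma24BBMTWhitney} is applied with parameter $2k$, yielding $m = 4Lk$.

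Second, you have the role of \Cref{lemma25BBMTWhitney} inverted. The derivative bound $\sup_x \partial_y P_{\om_T}(x+i) \le D\,\si(T)/T$ is \emph{not} what controls $-u$ on the unit disk; it is precisely what makes $u$ subharmonic across the real axis, via the distributional inequality $\int u\,\Delta g \ge 0$. This forces the coupling $\si(T)/T = rk/D$ between $T$ and $r$ (and this is what determines $r_0$: one needs such $T>1$ to exist). The quantity $\sup_{|w|\le 1}\exp(-u(w))$ is bounded instead through the \emph{first} inequality of \Cref{lemma27BBMTWhitney}, giving $-u(w) \le \tfrac{E}{k}\si(w) + \tfrac{EF}{k}\om(T) + \tfrac{G}{k}$, hence a factor $\exp(\tfrac{CEF}{k}\si(T))$ after using $\om \le C\si + C$. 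There is then no residual optimization over $T$: with $T$ already pinned by subharmonicity and $k = (2CEF+D)n$, the inequality $\tfrac{1}{n}\si^\star(nr) \ge \tfrac{1}{n}\si(T) - rT = \si(T)\,\tfrac{2CEF}{k}$ is a one-line identity. So the ``optimization'' you anticipate is in fact a forced choice, and the Legendre transform appears because that forced $T$ happens to realize (an inequality toward) the supremum in \eqref{omegaconjugate}.
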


\begin{proof}
There is a constant $C$ such that $\om \le C \si +C$.
Let $A,L,D,E,F,G$ be the constants arising in \Cref{lemma23BBMTWhitney}, \Cref{lemma24BBMTWhitney}, 
\Cref{lemma25BBMTWhitney}, and \Cref{lemma27BBMTWhitney}. We can assume that $C,L,D,E,F$ are positive integers. 
For $n\in\N_{>0}$ let 
\begin{equation} \label{eq:choiceofk}
	k:=(2CEF+D)n  \quad \text{  and } \quad m:=4Lk.
\end{equation}
Choose $0<r_0<1/2$ such that the equation $\si(t)/t=r_0 k/D$ has a solution $t>1$. 
Fix $0<r<r_0$ and choose $T = T(k,r)>1$ such that
\begin{equation}\label{lemma28BBMTWhitneyequ2}
\si(T)=T\,\frac{rk}{D}.
\end{equation}
Define $u_{n,r}:\C\rightarrow\RR$ by
\[
u_{n,r}(z):=r |\Im(z)|-\frac{h_T(z)}{k}-\frac{G}{k}.
\]
Then, by \eqref{lemma27BBMTWhitneyequ}, for all $z \in \C$
\begin{align}\label{lemma28BBMTWhitneyequ3}
u_{n,r}(z)&\le r|\Im(z)|-\frac{\om(z)}{k}, \\ 
-u_{n,r}(z)&\le-r|\Im(z)|+\frac{E}{k}\si(z)+\frac{EF}{k}\om(T)+\frac{G}{k}.\label{eq:BBMT2}
\end{align}
By definition $u_{n,r}$ is subharmonic on the open upper and lower half plane. 
By \eqref{lemma25BBMTWhitneyequ1} and \eqref{lemma28BBMTWhitneyequ2}, we have
\[
-\frac{1}{k}\frac{\partial}{\partial y}h_T(x)
=-\frac{1}{k}\frac{\partial}{\partial y}P_{\om_T}(x+i)\ge-\frac{D}{k}\frac{\si(T)}{T}
=-r,
\]
for all $x\in\RR$. 
Thus, for each non-negative $g\in C^\infty_c(\C)$, 
\[
\int_{\C}u_{n,r}(z)\Delta g(z)\, d\lambda(z)
=2\int_{-\infty}^{\infty}\Big(r-\frac{1}{k}\frac{\partial}{\partial y}h_T(x)\Big)g(x)\, dx\ge 0,
\]
whence $u_{n,r}$ is subharmonic on $\C$. 
By \eqref{lemma28BBMTWhitneyequ3} and \Cref{lemma23BBMTWhitney}, 
there is an entire function $F_{n,r}$ with $F_{n,r}(0)=1$ and 
\begin{equation*}\label{lemma28BBMTWhitneyequ4}
|F_{n,r}(z)|\le A\exp\Big(r|\Im(z)|-\frac{\om(z)}{k}+3\log(1+|z|^2)\Big)\sup_{|w|\le 1}\exp(-u_{n,r}(w))
\end{equation*}
for all $z \in \C$.
By \eqref{eq:BBMT2} and since $\om \le C \si +C$, there is a constant $K(n)>0$ such that 
\begin{align*}
\sup_{|w|\le 1}\exp(-u_{n,r}(w)) &\le K(n) \exp\Big(\frac{CEF}{k}\si(T)\Big).
\end{align*}
Using $\log (t) = o(\om(t))$ as $t \to \infty$ (i.e., \eqref{om3}), we find that (for a possibly larger constant $K(n)$)
\begin{align*}
|F_{n,r}(z)| &\le K(n)\exp\Big(r|\Im(z)|-\frac{\om(z)}{2k}\Big)\exp\Big(\frac{CEF}{k}\si(T)\Big)
\end{align*}
for all $z \in \C$.
By \Cref{lemma24BBMTWhitney}, there is a constant $B(n)>0$ and functions $g_{n,r}\in C^\infty(\R)$ 
satisfying \eqref{lemma28BBMTWhitneyequ} and
\begin{equation*} 
\sup_{x \in \R,\,j\in\NN}\frac{|g_{n,r}^{(j)}(x)|}{W^{4Lk}_j}\le B(n)K(n)^2\exp\Big(\frac{2CEF}{k}\si(T)\Big).
\end{equation*}
By the definition of $\si^{\star}$, \eqref{lemma28BBMTWhitneyequ2}, and the choice of $k$ (see \eqref{eq:choiceofk}), 
\begin{align*}
	\frac{1}{n} \si^{\star}(n r) &\ge \frac{1}{n} \big(\si(T)-n rT\big) 
=\si(T)\Big(\frac{1}{n}-\frac{D}{k}\Big)
=\si(T)\frac{2CEF}{k}	
\end{align*}
which implies \eqref{lemma28BBMTWhitneyequ1}. 
The proof is complete.
\end{proof}

\begin{proof}[Proof of \Cref{proposition22BBMTWhitney}]
Follow the proof of \cite[Proposition 2.2 (p.168)]{BBMT91} and use \Cref{lemma28BBMTWhitney}.
\end{proof}

\subsection{A special partition of unity}

Let $E \subseteq \R^n$ be a compact set.
We denote by $d(Q,E)$ the Euclidean distance of a closed set $Q \subseteq \R^n$ to $E$, 
in particular, $d(x,E) = \inf\{|x-y| : y \in E\}$.

\begin{lemma}[{\cite[p.167]{Stein70}, \cite[Lemma 3.2]{Bruna80}, \cite[Lemma 3.6]{BBMT91}}] \label{Whitneycubes}
  Let $E \subseteq \R^n$ be a non-empty compact set. 
  There exists a collection of closed cubes $\{Q_i\}_{i \in \N}$ with sides parallel to the axes 
  satisfying the following properties:
  \begin{enumerate}
    \item $\R^n \setminus E = \bigcup_{i \in \N} Q_i$.
    \item The interiors of the $Q_i$ are pairwise disjoint.
    \item $\on{diam} Q_i \le d(Q_i,E) \le 4 \on{diam} Q_i$ for all $i \in \N$. 
    \item Let $Q_i^*$ be the closed cube which has the same center as $Q_i$ expanded by the factor $9/8$.
    For each $i \in \N$ the number of cubes $Q_j^*$ which intersect $Q_i^*$ is bounded by $12^{2n}$.
    \item There exist $b_1, B_1>0$ (independent of $E$) such that for all $i,j \in \N$ with $Q_i^* \cap Q_j^* \ne \emptyset$ 
    we have 
      $b_1  \on{diam} Q_i \le \on{diam} Q_j \le B_1 \on{diam} Q_i$. 
    
  \end{enumerate}
\end{lemma}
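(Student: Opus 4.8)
The plan is to carry out the classical Whitney decomposition of the open set $\Om := \R^n \setminus E$ by dyadic cubes and then read off the five properties. For $k \in \Z$ let $\sM_k$ denote the mesh of dyadic cubes $2^{-k}(\mathbf{j}+[0,1]^n)$, $\mathbf{j}\in\Z^n$, so that each $Q\in\sM_k$ has side $2^{-k}$ and $\on{diam} Q = \sqrt{n}\,2^{-k}$; the decisive structural fact is that any two dyadic cubes are either nested or have disjoint interiors. Partition $\Om$ into the layers $\Om_k := \{x : 2\sqrt{n}\,2^{-k} < d(x,E) \le 2\sqrt{n}\,2^{-k+1}\}$ (a disjoint union over $k\in\Z$, since $d(\cdot,E)$ takes values in $(0,\infty)$ on $\Om$), and form the preliminary collection $\sF_0 := \{Q\in\sM_k : Q\cap\Om_k\neq\emptyset,\ k\in\Z\}$.

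First I would verify property (3) for every $Q\in\sF_0$. If $x\in Q\cap\Om_k$, then $\on{diam} Q = \sqrt{n}\,2^{-k}$ while $2\sqrt{n}\,2^{-k} < d(x,E)\le 4\sqrt{n}\,2^{-k}$; the triangle inequality gives $d(Q,E)\ge d(x,E)-\on{diam} Q > \sqrt{n}\,2^{-k} = \on{diam} Q$ and $d(Q,E)\le d(x,E)\le 4\sqrt{n}\,2^{-k} = 4\on{diam} Q$, which is exactly the constant $4$ demanded. The choice of the constant $2\sqrt{n}$ in the layers is what makes both bounds come out simultaneously. In particular each $Q\in\sF_0$ has $d(Q,E)>0$, hence $Q\subseteq\Om$, and since every $x\in\Om$ lies in its mesh-$k$ cube for the unique $k$ with $x\in\Om_k$, we get $\bigcup\sF_0 = \Om$. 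To secure disjoint interiors I pass to the subfamily $\sF=\{Q_i\}_{i\in\N}$ of cubes of $\sF_0$ that are maximal under inclusion: by the nesting dichotomy the maximal cubes have pairwise disjoint interiors (property (2)), and every $Q\in\sF_0$ lies in a maximal one because any ascending chain of cubes of $\sF_0$ containing a fixed $x$ has diameters bounded by $d(x,E)<\infty$ (here $E\neq\emptyset$ is used), so it terminates. Thus $\bigcup\sF = \Om$ (property (1)), and property (3) is inherited by $\sF\subseteq\sF_0$.

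Next, properties (4) and (5) are purely geometric consequences of (3). For (5), suppose $p\in Q_i^*\cap Q_j^*$. Since $Q_i^*$ is $Q_i$ scaled by $9/8$ about its center, a point of $Q_i^*$ lies within $\tfrac1{16}\on{diam} Q_i$ of $Q_i$; choosing $u\in Q_i$, $v\in Q_j$ with $|u-p|\le\tfrac1{16}\on{diam} Q_i$, $|v-p|\le\tfrac1{16}\on{diam} Q_j$, and a near-optimal $e\in E$ for $Q_j$, one estimates
\[
  \on{diam} Q_i \le d(Q_i,E) \le |u-p|+|p-v|+|v-e| \le \tfrac1{16}\on{diam} Q_i + \tfrac1{16}\on{diam} Q_j + d(Q_j,E),
\]
and with $d(Q_j,E)\le 4\on{diam} Q_j$ this yields $\on{diam} Q_i \le B_1\on{diam} Q_j$ for an explicit $B_1$ (e.g. $B_1 = 5$) independent of $E$; symmetry gives the lower bound with $b_1 = 1/B_1$. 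For (4), fix $i$: by (5) all $Q_j$ with $Q_j^*\cap Q_i^*\neq\emptyset$ have side comparable to that of $Q_i$ and are contained in a fixed concentric dilate of $Q_i$; since their interiors are disjoint, a volume count bounds their number by a constant depending only on $n$.

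The routine part is the chain of triangle-inequality estimates; the genuinely delicate points are calibrating the layer constant to $2\sqrt{n}$ so that the factor in (3) is exactly $4$, and the refinement step, where one must invoke the dyadic nesting dichotomy together with $E\neq\emptyset$ to guarantee that maximal cubes exist and still cover $\Om$. The only place where the stated value of the bound matters is the explicit constant $12^{2n}$ in (4); my volume argument gives \emph{some} explicit dimensional constant, and the sharper value $12^{2n}$ is obtained by the finer dyadic counting of cubes of comparable size meeting a fixed one, exactly as in the cited references.
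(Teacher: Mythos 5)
Your construction is correct, and it is essentially the paper's own proof: the paper proves nothing here, it quotes the lemma from the three cited references, and those references proceed exactly by your argument (dyadic meshes, layers calibrated by $2\sqrt n$, passage to maximal cubes, then triangle-inequality and volume estimates), so there is nothing to distinguish at the level of strategy. Two constants in your write-up need small repairs, though. In (5) you cannot use a single point $v\in Q_j$ that is simultaneously nearest to $p$ and realizes $d(Q_j,E)$ together with $e$; take $v_1\in Q_j$ nearest to $p$ and a pair $(v_2,e)\in Q_j\times E$ realizing $d(Q_j,E)$ (both exist by compactness), which inserts the extra term $|v_1-v_2|\le \on{diam}Q_j$ into your chain and gives $\tfrac{15}{16}\on{diam}Q_i\le\tfrac{81}{16}\on{diam}Q_j$, i.e.\ $B_1=\tfrac{27}{5}$ rather than $5$ --- harmless, since (5) only asserts the existence of constants independent of $E$. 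More importantly, the explicit bound $12^{2n}$ in (4) is part of the statement, and you should not defer it to ``finer dyadic counting'': your own volume count already yields it once the constants are written out. Indeed, if $Q_i^*\cap Q_j^*\ne\emptyset$ then the centers satisfy $\|x_i-x_j\|_\infty\le\tfrac{9}{16}(s_i+s_j)$ (where $s_i,s_j$ are the sides), so every point $x\in Q_j$ has $\|x-x_i\|_\infty\le\tfrac{9}{16}s_i+\tfrac{17}{16}s_j\le\bigl(\tfrac{9}{16}+\tfrac{17}{16}\cdot\tfrac{27}{5}\bigr)s_i=\tfrac{63}{10}s_i$; hence all such $Q_j$ lie in the cube concentric with $Q_i$ of side $\tfrac{63}{5}s_i$, each has volume at least $\bigl(\tfrac{5}{27}s_i\bigr)^n$ by (5), and disjointness of interiors bounds their number by $\bigl(\tfrac{63}{5}\cdot\tfrac{27}{5}\bigr)^n=\bigl(\tfrac{1701}{25}\bigr)^n\le 69^n\le 144^n=12^{2n}$. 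With these two adjustments the proof is complete as stated.
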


For every $x \in \R^n$ we denote by $\hat x$ any point in $E$ with $|x-\hat x| = d(x,E)$.

\begin{corollary}
  \label{cor:Whitneycubes}
  In the setting of \Cref{Whitneycubes}, let $x_i$ be the center of $Q_i$. 
  Then for all $x \in Q^*_i$,
  \begin{gather*}
    \frac{1}{2} d(x,E) \le  d(x_i,E) \le 3 d(x,E),
    \\
    \frac{1}{3} \on{diam} Q_i \le d(x,E) \le 9 \on{diam} Q_i,
  \end{gather*}
  \[
    |\hat x_i-x|\le 2 d(x_i,E), \quad |\hat x_i-\hat x| \le 4 d(x_i,E).
  \]
\end{corollary}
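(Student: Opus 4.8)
The plan is to deduce all five inequalities directly from the defining properties of the Whitney cubes in \Cref{Whitneycubes}, using only the triangle inequality for $d(\cdot,E)$ and the elementary geometry of a cube and its dilate $Q_i^*$. The key facts I would use are: for $x \in Q_i^*$ the point $x$ is within $\tfrac12 \on{diam} Q_i^* = \tfrac{9}{16}\on{diam} Q_i$ of the center $x_i$ (since $Q_i^*$ has side $\tfrac98$ that of $Q_i$, its half-diagonal is $\tfrac{9}{8}\cdot\tfrac{\sqrt n}{2}\on{diam} Q_i$ in the $n$-dimensional case, so I would be slightly more careful and bound $|x-x_i| \le \tfrac{9}{8}\on{diam} Q_i$ using the diameter of $Q_i^*$); and property (3), which reads $\on{diam} Q_i \le d(Q_i,E) \le 4\on{diam} Q_i$.

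First I would establish the size comparison $\tfrac13 \on{diam} Q_i \le d(x,E) \le 9 \on{diam} Q_i$. For the lower bound, since $x\in Q_i^*$ I have $d(x,E) \ge d(Q_i^*,E) \ge d(Q_i,E) - (\on{diam} Q_i^* - \on{diam} Q_i)$, and combining with $d(Q_i,E)\ge \on{diam} Q_i$ and $\on{diam} Q_i^* = \tfrac98 \on{diam} Q_i$ yields a bound of the form $c\,\on{diam} Q_i$; I would just track the constants to land at $\tfrac13$. For the upper bound, $d(x,E) \le d(x_i,E) + |x-x_i| \le d(Q_i,E) + \tfrac{\sqrt n}{2}\on{diam} Q_i + |x-x_i| \le 4\on{diam} Q_i + (\text{lower-order terms})$, which I bound by $9\on{diam} Q_i$. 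Next, applying this same pair of inequalities with $x$ replaced by $x_i$ (noting $x_i \in Q_i^* $) gives $\tfrac13 \on{diam} Q_i \le d(x_i,E) \le 9 \on{diam} Q_i$, and eliminating $\on{diam} Q_i$ between the estimates for $x$ and $x_i$ produces $\tfrac12 d(x,E) \le d(x_i,E) \le 3 d(x,E)$ (again adjusting the displayed constants to the stated ones, which may require replacing the crude intermediate constants by the optimal ones the statement records).

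Finally I would handle the two inequalities involving the nearest-point projections $\hat x$ and $\hat x_i$. Here $\hat x_i \in E$ realizes $d(x_i,E)$, so $|\hat x_i - x| \le |\hat x_i - x_i| + |x_i - x| = d(x_i,E) + |x_i-x|$, and since $|x_i - x| \le \tfrac{9}{8}\on{diam} Q_i \le d(x_i,E)$ (using the size comparison just proved, so that $\on{diam} Q_i$ is controlled by $d(x_i,E)$), this gives $|\hat x_i - x| \le 2 d(x_i,E)$. For the last one, $|\hat x_i - \hat x| \le |\hat x_i - x| + |x - \hat x| = |\hat x_i - x| + d(x,E)$, and bounding $d(x,E) \le 3 d(x_i,E)$ by the already-established comparison yields $|\hat x_i - \hat x| \le 2 d(x_i,E) + 3 d(x_i,E)$, which I would sharpen to the claimed $4 d(x_i,E)$ by instead writing $|\hat x_i - \hat x|\le |\hat x_i - x_i| + |x_i - \hat x|$ and using $|x_i - \hat x| \le |x_i - x| + |x - \hat x|$ with the tighter constants.

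The computation is entirely routine; the only mild subtlety, and the step I would watch most carefully, is matching the \emph{exact} numerical constants $\tfrac12, 3, \tfrac13, 9, 2, 4$ in the statement rather than the looser constants that fall out of a first pass. Getting these sharp requires using the dimension-independent bound $|x-x_i|\le \tfrac98 \on{diam} Q_i$ coming from $x\in Q_i^*$ together with property (3) in the crispest form, and occasionally chaining the comparisons in the most economical order; none of this is conceptually hard, but it is where an unoptimized estimate would give, say, a $10$ where the paper wants a $9$.
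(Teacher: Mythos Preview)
Your approach is the same as the paper's: everything is deduced from bounding $|x-x_i|$ for $x\in Q_i^*$ and applying property~(3). The paper's one-line proof records exactly the inequality you first wrote down,
\[
  |x_i-x|\le \tfrac12 \on{diam} Q_i^* = \tfrac{9}{16}\on{diam} Q_i \le \tfrac{9}{16} d(Q_i,E) \le \tfrac{9}{16} d(x_i,E),
\]
and says the rest is routine. Your instinct to doubt this and retreat to $|x-x_i|\le \tfrac98\on{diam} Q_i$ is a miscalculation: $\on{diam} Q_i$ already \emph{is} the diagonal (so it already carries the $\sqrt n$), hence $\on{diam} Q_i^* = \tfrac98\on{diam} Q_i$ exactly and the half-diagonal is $\tfrac{9}{16}\on{diam} Q_i$, dimension-independently. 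This matters for the constants: with the weak bound $\tfrac98$ you would only get $d(x_i,E)\ge \tfrac{8}{17}d(x,E)$, falling short of the stated $\tfrac12$. With the correct $\tfrac{9}{16}$ bound all five constants drop out immediately; in particular, for the last inequality use $|\hat x_i-\hat x|\le |\hat x_i-x|+d(x,E)\le 2d(x_i,E)+2d(x_i,E)$, where the second $2$ comes from $\tfrac12 d(x,E)\le d(x_i,E)$ (you accidentally invoked the reverse comparison, which gave you $5$ instead of $4$).
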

 
 \begin{proof}
 All this follows easily from  
 \[
  |x_i-x| \le \frac{9}{8} \frac{\on{diam} Q_i}{2} \le \frac{9}{16} d(Q_i,E) \le \frac{9}{16} d(x_i,E). \qedhere
 \]
 \end{proof}

In analogy with \cite[Lemma 3.7]{BBMT91} we may conclude the following.

\begin{proposition}\label{Proposition6}
Let $E \subseteq \R^n$ be a non-empty compact set
and let $\{Q_i\}_{i \in \N}$ be the family of cubes provided by \Cref{Whitneycubes}.
Let $\om$ be a non-quasianalytic concave weight function and let $\si$ be a heir of $\om$.  
Then for all $p\in\NN_{>0}$ there exist $m\in\N_{>0}$, $M>0$, $0 < r_0 < 1/2$,  and a family of smooth functions 
$\{\varphi_{i,p}\}_{i\in \N}$ satisfying
\begin{enumerate}
\item $0\le\varphi_{i,p}\le 1$ for all $i\in\NN$,
\item $\supp\varphi_{i,p}\subseteq Q_i^*$ for all $i\in\NN$,
\item $\sum_{i\in\N}\varphi_{i,p}(x)=1$ for all $x\in\R^n\setminus E$,
\item if $d(Q_i,E) \le r_0/B_1$, then for all $\be\in\N^n$ and $x\in\R^n\setminus E$,
\[
  |\varphi^{(\be)}_{i,p}(x)|\le M W^m_{|\be|} 
  \exp\Big(\frac{A_1(n)}{p}\si^{\star}\Big(\frac{b_1 p}{A_2(n)} \on{diam} Q_i\Big)\Big),
\]
for constants $A_1(n)\le A_2(n)$ only depending on $n$.
\end{enumerate}
\end{proposition}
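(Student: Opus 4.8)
The plan is to follow the construction of \cite[Lemma 3.7]{BBMT91}, feeding in the one–dimensional bump functions of \Cref{proposition22BBMTWhitney} in place of the ones used there, and to track how the transition from $\om$ to $\si$ propagates through the estimates. First I would build the bumps $\psi_{i,p}$ as tensor products. Write $x_i$ for the center and $2r_i$ for the side length of $Q_i$, so that $\on{diam} Q_i = 2\sqrt n\, r_i$ and $Q_i^*$ is $Q_i$ dilated about $x_i$ by the factor $9/8$. Applying \Cref{proposition22BBMTWhitney} with parameter $p$ and radius $r_i$ (legitimate once $r_i<r_0$) I set
\[
  \psi_{i,p}(x) := \prod_{\ell=1}^n f_{p,r_i}\big(x^{(\ell)}-x_i^{(\ell)}\big),
\]
which is smooth, takes values in $[0,1]$, equals $1$ on $Q_i$, and is supported in $Q_i^*$; by \Cref{cor:Whitneycubes} the cube $Q_i^*$ does not meet $E$, so $\psi_{i,p}\in C^\infty_c(\R^n\setminus E)$. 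I then put $\Phi_p:=\sum_{i}\psi_{i,p}$ and $\varphi_{i,p}:=\psi_{i,p}/\Phi_p$.

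By \Cref{Whitneycubes}(1) every $x\in\R^n\setminus E$ lies in some $Q_i$, whence $\Phi_p(x)\ge 1$; by \Cref{Whitneycubes}(4) the sum is locally finite and $1\le\Phi_p\le 12^{2n}$. Properties (1)--(3) are then immediate: $0\le\varphi_{i,p}=\psi_{i,p}/\Phi_p\le 1$, $\supp\varphi_{i,p}\subseteq\supp\psi_{i,p}\subseteq Q_i^*$, and $\sum_i\varphi_{i,p}=\Phi_p/\Phi_p=1$ on $\R^n\setminus E$. For the derivative estimate (4) I first bound the numerator. Differentiating the product and inserting \eqref{proposition22BBMTWhitneyequ1} gives
\[
  |\psi_{i,p}^{(\be)}(x)|\le M^n\Big(\prod_{\ell=1}^n W^m_{\be_\ell}\Big)\,\exp\Big(\tfrac{n}{p}\si^\star(p r_i)\Big),
\]
and the product of weights collapses to a single $W^m_{|\be|}$ by the submultiplicativity $W^m_jW^m_k\le W^m_{j+k}$ of \Cref{lem:basicM}(2). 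If $d(Q_i,E)\le r_0/B_1$ then, by \Cref{Whitneycubes}(3),(5), every cube $Q_j$ with $Q_j^*\cap Q_i^*\ne\emptyset$ has $r_j<r_0$ and $b_1 r_i\le r_j\le B_1 r_i$, so all the relevant bumps obey a \emph{common} bound on $Q_i^*$; since $\si^\star$ is decreasing, replacing $r_j$ by the smaller quantity $b_1 r_i=\tfrac{b_1}{2\sqrt n}\on{diam} Q_i$ only enlarges the exponential and produces the argument $\tfrac{b_1 p}{A_2(n)}\on{diam} Q_i$ with $A_2(n)=2\sqrt n$. This explains the shape of the exponential factor in (4).

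The crux is the denominator: bounding the derivatives of $\Phi_p^{-1}$ while keeping the scale cost $\exp(\tfrac{n}{p}\si^\star(\cdots))$ to a power \emph{independent of the differentiation order}. This is the only genuinely delicate point, because a naive Faà di Bruno expansion of $1/\Phi_p$ raises this order-independent constant to the number of blocks, which grows with $|\be|$. The mechanism, exactly as in \cite{BBMT91}, is to work at the common scale of the at most $12^{2n}$ cubes meeting $Q_i^*$ and to exploit that $\cB^{\{W^{m'}\}}$ is an inverse-closed algebra: this holds because $\fW$ is submultiplicative, equivalently of moderate growth (\Cref{lemma4}\eqref{eq:mW}), and because $\om(t)=o(t)$ forces $(w^x_k)^{1/k}\to\infty$ (\Cref{lemma4}(6)), so that $W^{m'}_N$ grows faster than any geometric sequence and thereby absorbs the compounding of the scale cost in the reciprocal. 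The outcome is a bound for $\Phi_p^{-1}$ of the same shape as for $\psi_{j,p}$, with the exponential factor raised only to a power fixed by the overlap bound $12^{2n}$ and the number of coordinates.

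Finally, Leibniz' rule
\[
  \varphi_{i,p}^{(\be)}=\sum_{\ga\le\be}\binom{\be}{\ga}\,\psi_{i,p}^{(\ga)}\,(\Phi_p^{-1})^{(\be-\ga)},
\]
together with one more application of $W^m_jW^m_k\le W^m_{j+k}$ to merge the weights and of \Cref{lemma4}\eqref{5.10} to absorb the residual geometric constants into a larger index $m$, yields (4) with dimensional constants $A_1(n)\le A_2(n)$ (the inversion may force us to enlarge both, preserving $A_1(n)\le A_2(n)$). I expect the reciprocal estimate to be the main obstacle; everything else is the standard Whitney bookkeeping furnished by \Cref{Whitneycubes} and \Cref{cor:Whitneycubes}.
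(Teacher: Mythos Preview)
Your construction of the $\psi_{i,p}$ and the bound $|\psi_{i,p}^{(\be)}|\le M^n W^m_{|\be|}\exp(\tfrac{n}{p}\si^\star(pr_i))$ are fine, and you correctly identify the reciprocal estimate as the crux. But the resolution you propose does not work. On $Q_i^*$ the relevant constant in front of $W^m_{|\be|}$ is $K_i:=M\exp(\tfrac{n}{p}\si^\star(p b_1 r_i))$, and Fa\`a di Bruno for $(\Phi_p^{-1})^{(\be)}$ produces terms carrying $K_i^k$ with $k$ ranging up to $|\be|$. You claim this geometric factor can be absorbed into $W^{m'}_{|\be|}$ via \Cref{lemma4}\eqref{5.10} because ``$W^{m'}_N$ grows faster than any geometric sequence''; however, \Cref{lemma4}\eqref{5.10} only absorbs a \emph{fixed} base $\rho$, whereas here the base $K_i$ is unbounded: as $r_i\to 0$ we have $\si^\star(p b_1 r_i)\to\infty$ and hence $K_i\to\infty$. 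There is no single $m'$ and $C$ with $K_i^{|\be|}W^m_{|\be|}\le C\,W^{m'}_{|\be|}$ uniformly in $i$ and $\be$. (Your appeal to $\om(t)=o(t)$ is also not among the hypotheses here, but this is secondary; even granting it, the absorption fails for the reason above.)

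The paper sidesteps the reciprocal entirely by using Bruna's multiplicative scheme
\[
  \varphi_{1,p}:=\psi_{1,p},\qquad \varphi_{i,p}:=\psi_{i,p}\prod_{k=1}^{i-1}(1-\psi_{k,p}),
\]
cf.\ \cite[Lemma 3.3]{Bruna80}. On any $Q_i^*$ at most $12^{2n}$ of the factors $(1-\psi_{k,p})$ differ from $1$, so $\varphi_{i,p}$ is a product of a \emph{fixed} number of functions, each bounded by $K_i W^m_{|\be|}$. Leibniz then yields the scale cost $K_i$ only to the fixed power $12^{2n}$, which gives exactly the constant $A_1(n)=n\cdot 12^{2n}$ in the exponential, while the combinatorial factor $(\text{const})^{|\be|}$ is genuinely geometric with a bounded base and can be absorbed into $W^m_{|\be|}$ via \Cref{lem:basicM}(2) and \Cref{lemma4}\eqref{5.10}. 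Replacing your quotient by this product is the missing idea.
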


\begin{proof}
  Let $p$ be a positive integer.      
   Let $f_{p,r}$, for $0< r<r_0=r_0(p)$, be the functions provided by \Cref{proposition22BBMTWhitney}. 
   The function
   \[
      g_{p,r}(x) := f_{p,r}(x_1) \cdots f_{p,r}(x_n), \quad x = (x_1,\ldots,x_n) \in \R^n,    
   \]
   satisfies $0 \le g_{p,r} \le 1$, has support in the cube centered at $0$ with sidelength $9r/4$ and 
   equals $1$ in the cube centered at $0$ with sidelength $2r$.
  There exist $m$, $M$ such that for all $r<r_0$,
   $\be\in \N^n$, and $x \in \R^n$
  \begin{align} \label{eq:part1}
      |g^{(\be)}_{p,r}(x)|\le M W^m_{|\be|}\exp\Big(\frac{n}{p}\si^{\star}(pr)\Big),
  \end{align}    
  thanks to \Cref{lem:basicM}(2) and \Cref{lemma4}(5). 
  
  Let $2r_i$ denote the sidelength of $Q_i$ and $x_i$ its center.   
  If $r_i < r_0$, or equivalently, $\on{diam} Q_i < 2 \sqrt n r_0$, 
  then we define
  \[
  \psi_{i,p}(x):=g_{p,r_i}(x-x_i).
  \]
    Then 
    \begin{equation} \label{eq:partition}
      0 \le \ps_{i,p} \le 1,\quad \supp \ps_{i,p} \subseteq  Q_i^*, 
      \quad  \ps_{i,p}|_{Q_i} = 1.  
    \end{equation}
    Moreover, by \eqref{eq:part1},
  \begin{equation} \label{eq:part11}
  	|\psi^{(\be)}_{i,p}(x)| 
      \le M W^m_{|\be|}\exp\Big(\frac{n}{p}\si^{\star}\Big(\frac{p}{2 \sqrt n} \on{diam} Q_i\Big)\Big).
  \end{equation}
  For those $i$ with $r_i \ge r_0$, we just choose arbitrary $C^\infty$-functions $\ps_{i,p}$ satisfying 
     \eqref{eq:partition}. 

     Then put
  \[
    \varphi_{1,p}:=\psi_{1,p},\quad \varphi_{i,p}:=\psi_{i,p}\prod_{k=1}^{i-1}(1-\psi_{k,p}),~i\ge 2.
  \]
  It is easy to check that (1)--(3) are satisfied (cf.\ \cite[Lemma 3.3]{Bruna80}).

  Assume that $d(Q_i,E) \le r_0/B_1$. 
  Then \Cref{Whitneycubes}(3)\&(5) guarantees that the diameters of the cubes 
  which correspond to nontrival factors in the product which defines $\vh_{i,p}$ satisfy 
  $\on{diam} Q_k \le r_0 < 2 \sqrt n r_0$. So for those factors we have the estimate \eqref{eq:part11}. 
  There are at most $12^{2n}$ such factors.
  Consequently, by \Cref{lem:basicM}(2) and \Cref{Whitneycubes}(5),  
  we get
  \[
  |\varphi^{(\be)}_{i,p}(x)|\le M^{12^{2n}}
  W^m_{|\be|}\exp\Big(\frac{n 12^{2n}}{p}\si^{\star}\Big(\frac{b_1 p}{2 \sqrt n} \on{diam} Q_i\Big)\Big). 
  \]
  This implies (4), since $\si^\star$ is decreasing.
\end{proof}

\section{The extension theorem} \label{sec:extension}

In this section we prove the implication $(2) \Rightarrow (1)$ in \Cref{main}.
We subdivide the proof into three parts for two reasons:
\begin{enumerate}
  \item The proof is (by nature) quite technical. We hope that the subdivison 
  improves the clarity of the presentation.
  \item The organization into parts should make it easier to see, where in the 
  line of arguments the particular assumptions are needed.
  The first two parts \Cref{sec:prelimI} and \Cref{sec:prelimII} prepare the 
  stage with preliminary lemmas and estimates. This is the place, where we 
  use that the heir $\si$ of $\om$ is \emph{good}. The actual proof of the 
  extension theorem is given in the third part, i.e., \Cref{sec:extproof}. 
\end{enumerate}
In \Cref{sec:CC} we deduce a consequence for Denjoy--Carleman classes and compare 
it with the result of \cite{ChaumatChollet94}. 
Finally, in \Cref{sec:strong} we discuss the relation of strong and good weight 
functions.

\subsection{Preliminaries, I} \label{sec:prelimI}

Let $E \subseteq \R^n$ be a compact set.
Let $S=(S_k)$ be a weight sequence satisfying $s_k^{1/k} \to \infty$ and let
$F= (F^\al)_{\al}$ be a Whitney ultrajet of class $\cB^{\{S\}}$ on $E$, i.e.,  
there exist $C>0$ and $\rh \ge 1$ 
such that 
\begin{gather}
  |F^\al(a)| \le C \rh^{|\al|} \,  S_{|\al|}, \quad \al \in \N^n,~ a \in E,
   \label{jets1}
  \\
  |(R^k_a F)^\al(b)| \le C \rh^{k+1} \, |\al|!\, s_{k+1}\,  |b-a|^{k+1-|\al|}, \quad k \in \N,\, |\al| \le k,~ a,b \in E.  
  \label{jets2}
\end{gather} 
The extension of $F$ will be of the form
\begin{equation} \label{extensionformula}
  \sum_{i\in \N} \vh_{i,p}(x) \, T_{\hat x_i}^{2 \ol \Ga_{s'}(L d(x_i)) } F(x),   \quad  x \in \R^n \setminus E,
\end{equation}
where 
\begin{itemize}
  \item $\{\vh_{i,p}\}_{i \in \N}$ is a partition of unity provided by \Cref{Proposition6} 
        ($x_i$ is the center of the cube $Q_i$), 
  \item $S'$ is a suitable weight sequence and $L$ is a constant, both depending on $S$. 
\end{itemize}
For simplicity of notation we use the abbreviation $d(x) := d(x,E)$.
Recall that $\hat x$ denotes any point in $E$ with $d(x) = |x-\hat x|$.

We begin with several estimates for the Taylor polynomials appearing in \eqref{extensionformula}.

\begin{lemma} \label{proposition10}
  For $a_1,a_2 \in E$, $x \in \R^n$ and $|\al| \le q$,
  \begin{equation*} 
    |(T^q_{a_1}F-T^q_{a_2}F)^{(\al)}(x)|\le C (2n^2 \rh)^{q+1}|\al|! \, s_{q+1}(|a_1-x|+|a_1-a_2|)^{q+1-|\al|}.
  \end{equation*}
\end{lemma}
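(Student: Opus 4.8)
The plan is to reduce the estimate to the remainder hypothesis \eqref{jets2} by means of a re-expansion identity. Observe that $G := T^q_{a_2}F$ is a polynomial of degree $\le q$, and therefore agrees with its own degree-$q$ Taylor expansion about $a_1$; differentiating $\al$ times (which is exact for polynomials) I get
\begin{equation*}
(T^q_{a_2}F)^{(\al)}(x) = \sum_{|\ga| \le q - |\al|} \frac{(x-a_1)^\ga}{\ga!}\, \p^{\al+\ga} G(a_1), \qquad \p^{\al+\ga}G(a_1) = \sum_{|\de| \le q - |\al| - |\ga|} \frac{(a_1-a_2)^\de}{\de!}\, F^{\al+\ga+\de}(a_2).
\end{equation*}
On the other hand $(T^q_{a_1}F)^{(\al)}(x) = \sum_{|\ga|\le q-|\al|}\frac{(x-a_1)^\ga}{\ga!}F^{\al+\ga}(a_1)$, so subtracting and recognizing the definition of the remainder in the coefficient difference $F^{\al+\ga}(a_1) - \p^{\al+\ga}G(a_1)$, I expect the clean identity
\begin{equation*}
(T^q_{a_1}F - T^q_{a_2}F)^{(\al)}(x) = \sum_{|\ga| \le q - |\al|} \frac{(x-a_1)^\ga}{\ga!}\, (R^q_{a_2}F)^{\al+\ga}(a_1).
\end{equation*}
This identity is the \emph{heart of the matter}; everything after it is estimation.

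Next I would insert the hypothesis. For every $\ga$ with $|\ga| \le q - |\al|$ one has $|\al+\ga| \le q$, so \eqref{jets2} applies with $k = q$, $a = a_2$, $b = a_1$ and the multi-index $\al+\ga$, giving
\begin{equation*}
|(R^q_{a_2}F)^{\al+\ga}(a_1)| \le C \rh^{q+1}\, (|\al|+|\ga|)!\, s_{q+1}\, |a_1-a_2|^{q+1-|\al|-|\ga|}.
\end{equation*}
Combining this with $|(x-a_1)^\ga| \le |x-a_1|^{|\ga|}$ yields
\begin{equation*}
|(T^q_{a_1}F - T^q_{a_2}F)^{(\al)}(x)| \le C \rh^{q+1} s_{q+1} \sum_{|\ga| \le q-|\al|} \frac{(|\al|+|\ga|)!}{\ga!}\, |x-a_1|^{|\ga|}\, |a_1-a_2|^{q+1-|\al|-|\ga|}.
\end{equation*}

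Finally I would do the combinatorial bookkeeping. Writing $\ell := |\al|$ and grouping the sum by $|\ga| = j$, the multinomial identity $\sum_{|\ga|=j}\frac{1}{\ga!} = \frac{n^j}{j!}$ collapses the inner sum to $\binom{\ell+j}{\ell}\ell!\,n^j$, so that, up to the factor $C\rh^{q+1}s_{q+1}\,\ell!$, I am left with $\sum_{j=0}^{q-\ell} \binom{\ell+j}{\ell}\, (n|x-a_1|)^j\, |a_1-a_2|^{q+1-\ell-j}$. Each monomial $|x-a_1|^j |a_1-a_2|^{q+1-\ell-j}$ is bounded by $(|x-a_1|+|a_1-a_2|)^{q+1-\ell}$, while $\binom{\ell+j}{\ell} \le 2^{\ell+j}$ reduces the remaining numerical factor to the geometric sum $\sum_{j=0}^{q-\ell} 2^{\ell+j}n^j \le (2n)^{q+1}$. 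Since $\rh \ge 1$ and $n \ge 1$, this is absorbed into $(2n^2\rh)^{q+1}$, and restoring $\ell! = |\al|!$ produces exactly the asserted bound.

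The only genuinely delicate point is the first step: one must keep the multi-index bookkeeping straight to be certain that the difference of the two differentiated Taylor polynomials collapses to the single family of remainder values $(R^q_{a_2}F)^{\al+\ga}(a_1)$ over the correct range $|\ga|\le q-|\al|$, which is precisely what makes \eqref{jets2} applicable term by term. After that the remaining estimates are routine, and the comfortable gap between the $(2n)^{q+1}$ I obtain and the stated constant $(2n^2)^{q+1}$ shows there is ample room to spare in the numerical factors.
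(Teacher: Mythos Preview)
Your proof is correct and is essentially the standard argument the paper defers to via \cite[Proposition 10]{ChaumatChollet94}: the key re-expansion identity $(T^q_{a_1}F-T^q_{a_2}F)^{(\al)}(x)=\sum_{|\ga|\le q-|\al|}\frac{(x-a_1)^\ga}{\ga!}(R^q_{a_2}F)^{\al+\ga}(a_1)$ followed by \eqref{jets2} and routine multinomial bookkeeping. Your verification of the geometric sum $2^\ell\sum_{j=0}^{q-\ell}(2n)^j\le(2n)^{q+1}$ is fine, and the resulting constant $(2n)^{q+1}$ indeed sits comfortably inside the stated $(2n^2)^{q+1}$.
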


\begin{proof}
  This is straightforward; for details see 
\cite[Proposition 10]{ChaumatChollet94}.
\end{proof}

\begin{lemma}\label{proposition9}
Let $S$, $S'$ be weight sequences satisfying $s_k^{1/k} \to \infty$, $(s'_k)^{1/k} \to \infty$, and  
\begin{equation} \label{eq:mass1}
  \E \la \le 1 \A t>0 : 2  \ol \Ga_{s'}(t) \le \ul \Ga_s(\la t).
\end{equation}
Then there is a constant $D_1 = D_1(S,S') >1$ such that,
for all Whitney ultrajets $F= (F^\al)_{\al}$ of class $\cB^{\{S\}}$ that satisfy \eqref{jets1} and \eqref{jets2},
all $L \ge D_1 \rh$, all $x \in \R^n$, and $\al\in \N^n$, 
\begin{align}
  |(T_{\hat x}^{2  \ol \Ga_{s'} (L d(x))} F)^{(\al)}(x)| &\le C (2L)^{|\al|+1} S_{|\al|},  \label{prop91} 
  \intertext{and, if $|\al| < 2 \ol \Ga_{s'}(L d(x))$,}
  |(T_{\hat x}^{2  \ol \Ga_{s'}(L d(x))}F)^{(\al)}(x)-F^\al(\hat x)| &\le C (2L)^{|\al|+1} |\al|!\, s_{|\al|+1} d(x).
  \label{prop92}
\end{align}
\end{lemma}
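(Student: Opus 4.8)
The plan is to expand the derivative of the Taylor polynomial explicitly and reduce both estimates to a single one-dimensional geometric sum. Writing $p := 2\ol\Ga_{s'}(L d(x))$, $j := |\al|$, and using $|x-\hat x| = d(x)$, differentiation gives
\[
  (T^p_{\hat x} F)^{(\al)}(x) = \sum_{0 \le |\ga| \le p-j} \frac{(x-\hat x)^\ga}{\ga!}\, F^{\al+\ga}(\hat x),
\]
which vanishes when $j > p$; hence we may assume $j \le p$. The $\ga = 0$ summand equals $F^\al(\hat x)$, so the left-hand side of \eqref{prop92} is exactly this sum restricted to $|\ga| \ge 1$. First I would bound $|F^{\al+\ga}(\hat x)| \le C\rh^{j+|\ga|} S_{j+|\ga|}$ by \eqref{jets1}, then collapse the multi-index sum using $\sum_{|\ga|=i} \ga!^{-1} = n^i/i!$ together with $S_{j+i}/i! = \binom{j+i}{i}j!\,s_{j+i} \le 2^{j+i}j!\,s_{j+i}$. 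This reduces both estimates to controlling $\sum_{i} (2n\rh d(x))^i s_{j+i}$, with an overall prefactor $C(2\rh)^j j!$.

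The crucial step is the monotonicity of $k \mapsto s_k t_0^k$ for $t_0 := \la L d(x)$. By hypothesis \eqref{eq:mass1} we have $p = 2\ol\Ga_{s'}(L d(x)) \le \ul\Ga_s(\la L d(x)) = \ul\Ga_s(t_0)$, so \Cref{basic}(3) gives $s_k t_0^k \le s_{k_0} t_0^{k_0}$ whenever $k_0 \le k \le p$. Applying this with $k_0 = j$ yields $s_{j+i} \le s_j\, t_0^{-i}$, and with $k_0 = j+1$ yields $s_{j+i} \le s_{j+1}\, t_0^{-(i-1)}$. In both cases the remaining sum becomes geometric with ratio $2n\rh d(x)/t_0 = 2n\rh/(\la L)$. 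Choosing $D_1 := 4n/\la$ (which is $>1$ since $\la \le 1$) forces this ratio to be $\le 1/2$ for every $L \ge D_1\rh$, so each geometric series is bounded by $2$.

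Putting the pieces together: for \eqref{prop91} the sum starting at $i=0$ is at most $2 s_j$, giving $|(T^p_{\hat x}F)^{(\al)}(x)| \le 2C(2\rh)^j S_j$, which is $\le C(2L)^{j+1}S_j$ because $2\rh \le 2L$ and $2 \le 2L$. For \eqref{prop92} I would factor the $i=1$ term out of the sum over $i \ge 1$, bound the residual geometric series by $2$, and arrive at $\sum_{i\ge 1}(2n\rh d(x))^i s_{j+i} \le 4n\rh\, d(x)\, s_{j+1}$; multiplying by the prefactor gives $|(T^p_{\hat x}F)^{(\al)}(x) - F^\al(\hat x)| \le 4nC\rh(2\rh)^j j!\, s_{j+1}\, d(x) \le C(2L)^{j+1} |\al|!\, s_{|\al|+1}\, d(x)$, using $4n\rh \le 2L$ and $\rh \le L$, both secured by $L \ge D_1\rh$.

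The arithmetic is routine; the point that requires care is the bookkeeping of the truncation degree, namely ensuring $j+i \le p \le \ul\Ga_s(t_0)$ throughout so that \Cref{basic}(3) is legitimately applicable, and separating the trivial case $j > p$. This is exactly where the hypothesis \eqref{eq:mass1}, linking $s'$ to $s$ through their counting functions, is indispensable: it guarantees that the degree $p$ of the chosen Taylor polynomial stays in the regime where $k \mapsto s_k t_0^k$ is still decreasing, which is precisely what makes the geometric estimate work.
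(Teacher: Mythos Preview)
Your proof is correct and follows essentially the same approach as the paper's: expand the derivative of the Taylor polynomial, invoke \eqref{jets1}, use \eqref{eq:mass1} to ensure all relevant indices lie below $\ul\Ga_s(\la L d(x))$ so that \Cref{basic}(3) yields the monotonicity $s_k t_0^k \le s_{k_0} t_0^{k_0}$, and finish with a geometric sum of ratio $\lesssim \rh/L$. The only differences are cosmetic bookkeeping --- you substitute $\ga=\be-\al$ and collapse via $\sum_{|\ga|=i}1/\ga!=n^i/i!$ together with $\binom{j+i}{i}\le 2^{j+i}$, whereas the paper sums over $\be$ directly using $(\,|\be-\al|\,)!/(\be-\al)! \le n^{|\be|-|\al|}$ and then $\#\{\be:|\be|=j\}\le n^j$ --- which yields your slightly sharper constant $D_1=4n/\la$ versus the paper's $D_1=4n^2/\la$.
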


\begin{proof}
For \eqref{prop91} we may restrict to the case $|\al| \le 2  \ol \Ga_{s'}(L d(x))$. By \eqref{jets1},
\begin{align}
  |(T_{\hat x}^{2 \ol \Ga_{s'}(Ld(x))}F)^{(\al)}(x)|
  &\le\sum_{\substack{\al \le \be\\ |\be|\le2  \ol \Ga_{s'}(Ld(x))}} 
  \frac{|x-\hat{x}|^{|\be|-|\al|}}{(\be-\al)!} C \rh^{|\be|} S_{|\be|}
  \notag \\
  &\le C |\al|! \sum_{\substack{\al \le \be\\ |\be|\le2  \ol \Ga_{s'}(Ld(x))}}
  \frac{|\be|!\,(n d(x))^{|\be|-|\al|}}{|\al|!\, (|\be|-|\al|)!}  \rh^{|\be|} s_{|\be|}
  \notag \\
  &\le  \frac{C |\al|! }{(n d(x))^{|\al|}} \sum_{\substack{\al \le \be\\ |\be|\le2 \ol \Ga_{s'}(Ld(x))}}
  (2n \rh  d(x))^{|\be|} s_{|\be|}
  \notag \\
  &\le \frac{C|\al|! }{(n d(x))^{|\al|}} \sum_{j = |\al|}^{2 \ol \Ga_{s'}(Ld(x))}
  (2n^2 \rh  d(x))^{j} s_{j},  \label{calculation} 
\end{align}
since the number of $\be \in \N^n$ with $|\be|=j$ is bounded by $n^j$.
By \eqref{eq:mass1}, we may let $j$ run from $|\al|$ to $\ul \Ga_s(L\la d(x))$ in the sum on the right-hand side of 
\eqref{calculation}. For such $j$ we have $(L \la d(x))^{j} s_{j}\le (L \la d(x))^{|\al|} s_{|\al|}$, 
by \Cref{basic}\eqref{eq:ulGa3}, 
and hence
\begin{align*}
  |(T_{\hat x}^{2  \ol \Ga_{s'}(Ld(x))}F)^{(\al)}(x)|
  &\le C S_{|\al|} \Big(\frac{L \la}{n}\Big)^{|\al|} \sum_{j = |\al|}^{\ul \Ga_s(L\la d(x))}
  \Big(\frac{2n^2 \rh}{L \la } \Big)^{j}.  
\end{align*}
We obtain \eqref{prop91} if $L$ is chosen such that $2n^2\rh /(L\la)\le 1/2$; 
then $D_1= 4n^2/\la$.

For \eqref{prop92} note that, if $|\al| < 2 \ol \Ga_{s'}(L d(x))$, then 
\[
  (T_{\hat x}^{2 \ol  \Ga_{s'}(L d(x))}F)^{(\al)}(x)-F^\al(\hat x) 
  = \sum_{\substack{\al \le \be\\ |\al| < |\be|\le2  \ol \Ga_{s'}(Ld(x))}}\frac{(x-\hat{x})^{\be-\al}}{(\be-\al)!} F^\be(\hat x).
\]
Thus the same arguments yield \eqref{prop92}.
\end{proof}

\subsection{Preliminaries, II} \label{sec:prelimII}

Let $\si$ be a good weight function and let $\fS$ be the associated weight matrix.
Let $E$ be a compact subset of $\R^n$ and $F \in \cB^{\{\si\}}(E)$.
There exist $S \in \fS$, $C>0$ and $\rh \ge 1$ 
such that \eqref{jets1} and \eqref{jets2} hold.
By \Cref{cor:essence}, there are $\dot S, \ddot S, \dddot S \in \fS$ satisfying $S \le \dot S \le \ddot S \le \dddot S$
such that: 
\begin{align} \label{eq:ext1} 
     \E D = D(S)\ge 1 &\A t >0 : 
     \notag \\
    \ol \Ga_{\dddot s}(D^3t) &\le \ul \Ga_{\ddot s}(D^2t) 
    \le \ol \Ga_{\ddot s}(D^2t) 
    \le \ul \Ga_{\dot s}(Dt) \le \frac{\ul \Ga_{s}(t)}{2},
    \intertext{as well as} 
     \label{eq:mge1}
     &\A j,k \in \N : s_{j+k} \le \dot s_j \dot s_k, 
    \\ \label{eq:mge2}
    &\A j,k \in \N : \dot s_{j+k} \le \ddot s_j \ddot s_k.
  \end{align}   
Let $\{Q_i\}_{i\in \N}$ be the family of cubes provided by \Cref{Whitneycubes} and let $b_1,B_1$ be the constants from 
  \Cref{Whitneycubes}. Let $x_i$ be the center of $Q_i$.

\begin{lemma} \label{lem:H1}
  There is a constant $C_1 = C_1(S)>0$ such that for all $L>C_1 \rh$, all $\be \in \N^n$, 
  and all $x \in Q_i^*$ with $d(x)<1$, 
  \begin{align} \label{H1}
  |\p^\be (T_{\hat x_i}^{2 \ol \Ga_{\ddot s}(L d(x_i)) } F - T_{\hat x}^{2 \ol \Ga_{\ddot s}(L d(x_i)) } F) (x)| 
  &\le C   
    L^{|\be|+1}  \ddot S_{|\be|} \, h_{\ddot s}(L d(x_i)).
\end{align}
\end{lemma}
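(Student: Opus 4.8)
The plan is to feed the two Taylor polynomials into \Cref{proposition10} and then convert the resulting single-term bound into a multiple of $h_{\ddot s}(Ld(x_i))$, the whole point being that the degree $q := 2\ol\Ga_{\ddot s}(Ld(x_i))$ has been \emph{doubled}. First I would apply \Cref{proposition10} with $a_1 = \hat x_i$, $a_2 = \hat x$, $\al = \be$, giving
\[
  |\p^\be(T^q_{\hat x_i}F - T^q_{\hat x}F)(x)| \le C(2n^2\rh)^{q+1}|\be|!\,s_{q+1}\,(|\hat x_i - x| + |\hat x_i - \hat x|)^{q+1-|\be|}
\]
for $|\be|\le q$ (for $|\be|>q$ both sides vanish since the polynomials have degree $\le q$). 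By \Cref{cor:Whitneycubes}, $|\hat x_i - x|\le 2d(x_i)$ and $|\hat x_i - \hat x|\le 4d(x_i)$, so the last factor is at most $(6d(x_i))^{q+1-|\be|}$. Writing $N := \ol\Ga_{\ddot s}(Ld(x_i))$ (so $q = 2N$) and $j := q+1-|\be|$, everything reduces to the estimate
\[
  (2n^2\rh)^{2N+1}|\be|!\,s_{2N+1}\,(6d(x_i))^{j}\le C\,L^{|\be|+1}\,\ddot S_{|\be|}\,h_{\ddot s}(Ld(x_i)).
\]

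Next I would peel off $\ddot S_{|\be|} = |\be|!\,\ddot s_{|\be|}$. Splitting $s_{2N+1} = s_{|\be|+j}\le\dot s_{|\be|}\dot s_j$ by \eqref{eq:mge1} and using $\dot s_{|\be|}\le\ddot s_{|\be|}$, the task becomes
\[
  (2n^2\rh)^{2N+1}\dot s_j\,(6d(x_i))^{j}\le C\,L^{|\be|+1}\,h_{\ddot s}(Ld(x_i)),\qquad h_{\ddot s}(Ld(x_i)) = \ddot s_N\,(Ld(x_i))^N.
\]
The crucial input is \eqref{eq:ext1}: chaining its inequalities yields $N = \ol\Ga_{\ddot s}(Ld(x_i))\le\ul\Ga_{\dot s}(Ld(x_i)/D)$ and, analogously, $2N\le\ul\Ga_{s}(Ld(x_i)/D^2)$. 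By \Cref{basic}\eqref{eq:ulGa3} the maps $k\mapsto\dot s_k(Ld(x_i)/D)^k$ and $k\mapsto s_k(Ld(x_i)/D^2)^k$ are therefore decreasing on the whole ranges $k\le N$ and $k\le 2N$; this is exactly where the doubling of the degree is used, and where the non-log-convexity of $\ddot s$ is bypassed by passing to the auxiliary sequences $\dot s$ and $s$.

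Finally I would distinguish the regimes $|\be|\le N+1$ and $|\be|\ge N+2$. In the first, $j\ge N$, and I would use \eqref{eq:mge2} to write $\dot s_j\le\ddot s_N\ddot s_{j-N}$, isolate $\ddot s_N(6d(x_i))^N = (6/L)^N h_{\ddot s}(Ld(x_i))$, and control the residual $\ddot s_{j-N}(6d(x_i))^{j-N}$ (index $j-N\le N+1$) by the monotonicity furnished by \eqref{eq:ext1}. In the second, $j<N$, the decreasing property gives directly $\dot s_j(6d(x_i))^j = (6D/L)^j\dot s_j(Ld(x_i)/D)^j\le(6D/L)^j$, which is compensated by the large power $L^{|\be|+1}$. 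In both regimes the prefactors collapse to a geometric quantity of the form $(C n^2 D^2\rh/L)^{2N+1}$ times controlled powers of $L$, and taking $L>C_1\rh$ with $C_1 = C_1(S)$ large enough (absorbing $n$, $D$, and the constants peeled off above) makes the whole expression at most $C\,L^{|\be|+1}h_{\ddot s}(Ld(x_i))$. I expect the main obstacle to be precisely the control of the residual factor against $h_{\ddot s}(Ld(x_i))$ together with the matching of the $L$-powers against the $(2n^2\rh)^{2N+1}$ coming from \Cref{proposition10}: it is here that the three auxiliary sequences $\dot s,\ddot s,\dddot s$ and the full chain \eqref{eq:ext1} are indispensable, because they supply the monotonicity that a single, possibly non-log-convex, sequence $\ddot s$ fails to provide.
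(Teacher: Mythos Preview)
Your proposal has a genuine gap in the case analysis. In your case~2 ($|\be|\ge N+2$, hence $j<N$) you bound $\dot s_j(6d(x_i))^j\le(6D/L)^j$ and say this is ``compensated by the large power $L^{|\be|+1}$''. But after this step you still need
\[
(2n^2\rh)^{2N+1}(6D/L)^j \;\le\; C\,L^{|\be|+1}\,h_{\ddot s}(Ld(x_i)),
\]
and since $|\be|+1=2N+2-j$ this reduces (up to harmless constants) to $(C'\rh/L)^{2N}\lesssim h_{\ddot s}(Ld(x_i))$. That is false in general: $h_{\ddot s}(Ld(x_i))=\ddot s_N(Ld(x_i))^N$ can decay much faster than any fixed geometric rate $\theta^N$ as $d(x_i)\to 0$ (think of $\ddot s_k\sim e^{k^2}$). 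Once you have used $\dot s_j(Ld(x_i)/D)^j\le 1$ you have irreversibly lost the factor $h_{\ddot s}$ that must appear on the right-hand side. Case~1 has a parallel issue: the residual $\ddot s_{j-N}(6d(x_i))^{j-N}$ with $j-N$ ranging up to $N+1$ would require $k\mapsto\ddot s_k t^k$ to be decreasing on $\{0,\dots,N+1\}$, but this is guaranteed only for $k\le\ul\Ga_{\ddot s}(t)$, and the chain \eqref{eq:ext1} gives no useful lower bound on $\ul\Ga_{\ddot s}(Ld(x_i))$ in terms of $N=\ol\Ga_{\ddot s}(Ld(x_i))$.

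The paper sidesteps both difficulties by splitting $s_{q+1}$ \emph{symmetrically at the midpoint} rather than at $|\be|$: by \eqref{eq:mge1} and then \eqref{eq:mge2},
\[
s_{q+1}\;\le\;\dot s_1\,\dot s_{2N}\;\le\;\dot s_1\,\ddot s_N^{\,2}.
\]
Because $\ddot s_N(Ld(x_i))^N=h_{\ddot s}(Ld(x_i))$ by the very definition of $\ol\Ga_{\ddot s}$, the factor $\ddot s_N^{\,2}$ combined with $(Ld(x_i))^{2N}$ gives $h_{\ddot s}(Ld(x_i))^2$. One copy of $h_{\ddot s}(Ld(x_i))$ is then traded for $\ddot s_{|\be|}(Ld(x_i))^{|\be|}$, which is legitimate for \emph{every} $\be$ simply because $h_{\ddot s}$ is an infimum. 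This produces both the $\ddot S_{|\be|}$ and the $h_{\ddot s}$ in \eqref{H1} at once, with no case split; the leftover is a pure geometric factor $(12n^2\rh/L)^{2N}$, killed by $L>36n^2\dot s_1\,\rh$. In particular, \eqref{eq:ext1} is not used at all in this lemma --- it enters only in \Cref{lem:H2}.
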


\begin{proof}
  It suffices to consider $|\be| \le 2 \ol \Ga_{\ddot s}(L d(x_i)) =: q$. Let $H_1$ denote the left-hand side of \eqref{H1}.
By \Cref{proposition10}, 
\begin{align*}
  H_1 
  \le C (2n^2 \rh)^{q+1} |\be|! \, 
  s_{q+1}  (|\hat x_i-x|+|\hat x_i-\hat x|)^{q+1-|\be|}.
\end{align*}
By \eqref{eq:mge1} and \eqref{eq:mge2}, 
\[
  s_{q+1} \le \dot s_1 \dot s_{q}   =  \dot s_1 \dot s_{2 \ol \Ga_{\ddot s}(L d(x_i))}  \le 
  \dot s_1 \ddot s_{\ol \Ga_{\ddot s}(L d(x_i))}^2.
\]
Together with
\Cref{cor:Whitneycubes}, we conclude that     
\begin{align*}
  H_1 
  &\le C \dot s_1 (2n^2  \rh)^{q+1} 
  |\be|! \,  \ddot s_{\ol \Ga_{\ddot s}(L d(x_i))}^2 (6 d(x_i))^{q+1-|\be|}.
\end{align*} 
By the definition of $\ol \Ga_{\ddot s}$,
\[
h_{\ddot s}(L d(x_i)) = \ddot s_{\ol \Ga_{\ddot s}(L d(x_i))} (L d(x_i))^{\ol \Ga_{\ddot s}(L d(x_i))} \le 
\ddot s_{|\be|} (L d(x_i))^{|\be|} \quad \text{ for all } \be.
\] 
Since $d(x_i) \le 3\, d(x)$, by \Cref{cor:Whitneycubes}, we find  
\begin{align*}
  H_1 
  &\le 36 \, C  \dot s_1  n^2    \rh \Big(\frac{12n^2  \rh}{L}\Big)^{q} 
   d(x)\, L^{|\be|}\, |\be|!\,\ddot s_{|\be|} \, h_{\ddot s}(L d(x_i)).
\end{align*}
If $L >  36 n^2   \dot s_1 \, \rh$ and $d(x)<1$, then \eqref{H1} follows. 
\end{proof}

\begin{lemma} \label{lem:H2}
	There is a constant $C_2 = C_2(S)>0$ such that for all $L>C_2 \rh$, all $\be \in \N^n$, and all $x \in Q_i^*$ 
	with $d(x)<1$, 
	\begin{align} \label{H2}
  |\p^\be (T_{\hat x}^{2 \ol \Ga_{\ddot s}(L d(x_i)) } F - T_{\hat x}^{2 \ol \Ga_{\ddot s}(L d(x)) } F) (x)| 
  &\le  C \Big(\frac{3L D }{n}\Big)^{|\be|+1} \dddot S_{|\be|}  h_{\dddot s}(3 LD d(x)).   
\end{align}  
\end{lemma}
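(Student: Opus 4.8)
The plan is to use that both polynomials are now expanded at the \emph{same} centre $\hat x$, so their difference is merely the sum of the homogeneous terms whose degree lies between the two; no transport between base points occurs (in contrast to \Cref{lem:H1}, where \Cref{proposition10} was needed). Writing $q_1 := 2\ol\Ga_{\ddot s}(L\,d(x_i))$ and $q_2 := 2\ol\Ga_{\ddot s}(L\,d(x))$ and setting $q_{\min} := \min(q_1,q_2)$, $q_{\max} := \max(q_1,q_2)$, one has $T^{q_1}_{\hat x}F - T^{q_2}_{\hat x}F = \pm\sum_{q_{\min} < |\ga| \le q_{\max}}\frac{(\,\cdot-\hat x)^\ga}{\ga!}F^\ga(\hat x)$, so both cases are treated at once. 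Applying $\p^\be$, evaluating at $x$, using $|x-\hat x| = d(x)$ and the pointwise bound \eqref{jets1}, and collapsing the multi-index sum via $\sum_{\ga\ge\be,\,|\ga|=j}1/(\ga-\be)! = n^{j-|\be|}/(j-|\be|)!$ exactly as in the proof of \Cref{proposition9}, reduces everything to the one-dimensional estimate of $C\sum_{q_{\min}<j\le q_{\max}} \rh^{j} S_{j}\,(n\,d(x))^{j-|\be|}/(j-|\be|)!$.

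Next I would pin down where this summation range sits. By \Cref{cor:Whitneycubes}, $\tfrac12 d(x)\le d(x_i)\le 3\,d(x)$. Feeding $D^2t = L\,d(x_i)$ and $D^2t = L\,d(x)$ into the inequality $\ol\Ga_{\ddot s}(D^2t)\le \tfrac12\ul\Ga_s(t)$ contained in \eqref{eq:ext1}, and using that $\ul\Ga_s$ is decreasing (\Cref{basic}(2)), gives $q_{\max}\le \ul\Ga_s\big(L\,d(x)/(2D^2)\big)$; hence $j\mapsto s_j t^j$ is decreasing on the whole range at $t := L\,d(x)/(2D^2)$, by \Cref{basic}\eqref{eq:ulGa3}. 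On the other hand, feeding $D^2 t = 3L\,d(x)$ into $\ol\Ga_{\dddot s}(D^3t)\le \ul\Ga_{\ddot s}(D^2t)$, together with $\ul\Ga_{\ddot s}\le\ol\Ga_{\ddot s}$ (\Cref{basic}(5)) and the monotonicity of $\ul\Ga_{\ddot s}$, shows that the index $\ell_0 := \ol\Ga_{\dddot s}(3LD\,d(x))$ realizing $h_{\dddot s}(3LD\,d(x)) = \dddot s_{\ell_0}(3LD\,d(x))^{\ell_0}$ satisfies $2\ell_0\le q_{\min} < j$ throughout the range. This is the step where the passage from the arguments $L\,d(x_i)$, $L\,d(x)$ to $3LD\,d(x)$ — and hence the recourse to the fourth sequence $\dddot S$ — is forced.

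With the range under control I would finish as in the proofs of \Cref{proposition9} and \Cref{lem:H1}: the decreasing property reduces the sum, up to a convergent geometric factor of ratio $O(\rh/L)$, to its bottom term, which requires $L > C_2\rh$ with $C_2 = C_2(S)$; then I split the surviving $S_j = j!\,s_j$ by \eqref{eq:mge1} and \eqref{eq:mge2}, peeling off one factor $\dddot s_{|\be|}$ (so that $|\be|!\,\dddot s_{|\be|} = \dddot S_{|\be|}$) and one factor $\dddot s_{\ell_0}$, which combines with $d(x)^{\ell_0} \le (3LD\,d(x))^{\ell_0}$ and the minimality of $\ell_0$ into $h_{\dddot s}(3LD\,d(x))$ (using $\dddot s\ge\ddot s\ge\dot s$ and $3LD\ge1$); the leftover powers of $d(x)$, $n$, $\rh$ and the binomial coefficients are absorbed, for $L$ large, into the factor $(3LD/n)^{|\be|+1}$ and a bounded tail. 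I expect the genuinely delicate point to be precisely this last bookkeeping — routing $s_j$ through $\dot s,\ddot s,\dddot s$ so that the minimizing power lands exactly on the argument $3LD\,d(x)$ of $h_{\dddot s}$ while all remaining $L$-, $n$-, $\rh$-powers reassemble into the stated prefactor and a convergent series — carried out uniformly in $i$, in $x\in Q_i^\ast$ with $d(x)<1$, and in both cases $q_1\gtrless q_2$.
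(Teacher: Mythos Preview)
Your overall strategy matches the paper's exactly: the difference of Taylor polynomials at the common centre $\hat x$ has only terms of degree in $(q_{\min},q_{\max}]$, the chain \eqref{eq:ext1} together with \Cref{cor:Whitneycubes} pins $q_{\max}\le\ul\Ga_s(L\la d(x))$ (with $\la=1/(2D^2)$) and $2\ell_0\le q_{\min}$ for $\ell_0:=\ol\Ga_{\dddot s}(3LDd(x))$, and then the decreasing property of $j\mapsto s_j t^j$ collapses the sum. All of this is precisely what the paper does.

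The gap is in your final bookkeeping. Peeling off $\dddot s_{|\be|}$ \emph{and} $\dddot s_{\ell_0}$ from $s_j$ via \eqref{eq:mge1} and \eqref{eq:mge2} requires $|\be|+\ell_0\le j$, and with $j=q_{\min}+1$ this need not hold: $q_{\min}\ge 2\ell_0$ gives you $|\be|\le q_{\min}$ and $\ell_0\le q_{\min}/2$, but that only yields $|\be|+\ell_0\le\tfrac32 q_{\min}$, not $\le q_{\min}+1$. The paper avoids this as follows. Rather than stopping at the bottom term $q_{\min}+1$, it pushes the monotonicity all the way down to the \emph{even} index $2q:=2\ell_0$ (licit since $2q\le q_{\min}<j\le q_{\max}\le\ul\Ga_s(L\la d(x))$), obtaining $s_j(L\la d(x))^j\le s_{2q}(L\la d(x))^{2q}$. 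Then a \emph{single} application of \eqref{eq:mge1} gives $s_{2q}\le\dot s_q^2\le\dddot s_q^2$. One factor $\dddot s_q(3LDd(x))^q$ is $h_{\dddot s}(3LDd(x))$ by definition of $q=\ell_0$; the second factor is handled not by another splitting but by the \emph{infimum property} of $h_{\dddot s}$:
\[
\dddot s_q(3LDd(x))^q=h_{\dddot s}(3LDd(x))\le \dddot s_{|\be|}(3LDd(x))^{|\be|},
\]
valid for every $|\be|$ regardless of its size relative to $q$. This is how $\dddot S_{|\be|}=|\be|!\,\dddot s_{|\be|}$ emerges, and \eqref{eq:mge2} is never invoked. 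Once you make this adjustment your proof goes through; the remaining algebra assembles exactly as you describe, with the geometric tail $\sum_j(2n^2\rh/(L\la))^j$ bounded by $2$ for $L\ge 4n^2\rh/\la$.
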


\begin{proof}
Let $H_2$ denote the left-hand side of \eqref{H2}.
	Using \eqref{eq:ext1}, \Cref{cor:Whitneycubes} and the fact that $\ul \Ga_s$ is decreasing, it is easy to see that 
both $2 \ol \Ga_{\ddot s}(L d(x_i))$ and $2 \ol \Ga_{\ddot s}(L d(x))$ are majorized by $\ul \Ga_s(L \la  d(x))$ 
for some $\la <1$.
So the degree of the polynomial $T_{\hat x}^{2 \ol \Ga_{\ddot s}(L d(x_i)) } F - T_{\hat x}^{2 \ol \Ga_{\ddot s}(L d(x)) } F$
is at most $\ul \Ga_s(L \la  d(x))$. 
Similarly the valuation of the polynomial is at least 
$2 \ol \Ga_{\dddot s}(3 L D d( x)) =: 2 q$,
indeed, using that $\ul \Ga_{\ddot s}$ is decreasing 
\[
  2\ol \Ga_{\ddot s}(Ld(x_i)) \ge 2\ul \Ga_{\ddot s}(Ld(x_i)) \ge 2\ul \Ga_{\ddot s}(3L d(x)) 
  \ge  2\ol \Ga_{\dddot s}(3LDd(x))
\]
and analogously for $2\ol \Ga_{\ddot s}(Ld(x))$.
Thus, by the calculation in \eqref{calculation}, 
\begin{align*}
  H_2 
  &\le \frac{C |\be|!}{(n d(x))^{|\be|}} \sum_{j =2 q }^{\ul \Ga_s(L \la  d(x))}
  (2n^2 \rh  d(x))^{j} s_{j}. 
\end{align*}
By \Cref{basic}\eqref{eq:ulGa3}, 
\[s_j (L \la  d(x))^j \le s_{2 q} (L \la  d(x))^{2 q} \quad \text{ for } 2q \le j \le \ul \Ga_s(L \la  d(x)). \]
By \eqref{eq:mge1}, $s_{2q}   \le 
  \dot s_{q}^2 \le \dddot s_{q}^2$. By the definition of $q$,
\[
  h_{\dddot s}(3LD d(x)) = \dddot s_{q} (3LD d(x))^{q} \le \dddot s_{|\be|} (3LD d(x))^{|\be|} \quad \text{ for all } 
  |\be|.
\]
All this leads to 
\begin{align*}
  H_2 
  &\le \frac{C |\be|!}{(n d(x))^{|\be|}} \sum_{j =2 q }^{\ul \Ga_s(L \la  d(x))}
  \Big(\frac{2n^2 \rh}{L\la } \Big)^{j} s_{2 q} (L \la  d(x))^{2 q}
  \\
  &\le \frac{C |\be|!}{(n d(x))^{|\be|}} \sum_{j =2 q }^{\ul \Ga_s(L \la  d(x))}
  \Big(\frac{2n^2 \rh}{L\la } \Big)^{j} \dddot s_{q}^2 ( L \la  d(x))^{2 q}
  \\
  &\le  C \Big(\frac{3LD}{n}\Big)^{|\be|} |\be|!\, \dddot s_{|\be|}  h_{\dddot s}(3LD d(x)) 
  \Big(\frac{\la }{3D}\Big)^{2 q}
  \sum_{j =2 q }^{\ul \Ga_s(L \la  d(x))}
  \Big(\frac{2n^2  \rh}{L\la } \Big)^{j}.   
\end{align*}
If we choose $L \ge 4n^2  \rh/\la$, then the sum is bounded by $2$, 
and \eqref{H2} follows, as $\la<1$ and $D \ge 1$. 
\end{proof}

\subsection{The extension theorem} \label{sec:extproof}

\begin{theorem}\label{extensiontheorem}
Let $\om$ be a non-quasianalytic concave weight function
and let $\si$ be a good heir of $\om$.
Let $E$ be a compact subset of $\R^n$. 
Then the jet mapping $j^\infty_E : \cB^{\{\om\}}(\R^n) \to \cB^{\{\si\}}(E)$ is surjective.
\end{theorem}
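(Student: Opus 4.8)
The plan is to realize the surjectivity by the explicit Dynkin-type formula \eqref{extensionformula}. Given $F \in \cB^{\{\si\}}(E)$, I fix $S \in \fS$, $C>0$, and $\rh \ge 1$ with \eqref{jets1} and \eqref{jets2}, and use \Cref{cor:essence} to produce $\dot S \le \ddot S \le \dddot S$ in $\fS$ satisfying the chain \eqref{eq:ext1} together with the submultiplicativity \eqref{eq:mge1} and \eqref{eq:mge2}; this is the only place where the hypothesis that $\si$ is \emph{good} enters. Take the Whitney cubes $\{Q_i\}$ of \Cref{Whitneycubes} with centers $x_i$ and nearest points $\hat x_i \in E$, and the partition of unity $\{\vh_{i,p}\}$ of \Cref{Proposition6} with $p$ a large integer fixed only at the very end. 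Setting $s' := \ddot s$, the chain \eqref{eq:ext1} yields \eqref{eq:mass1}, so I may define $f$ on $\R^n\setminus E$ by \eqref{extensionformula} with degrees $q_i := 2\ol\Ga_{\ddot s}(L d(x_i))$ and put $\p^\al f|_E := F^\al$. Since $\om$ is non-quasianalytic, it is harmless to work only in the neighborhood $\{d(x)<1\}$ of $E$: once $f$ is shown to lie in $\cB^{\{\om\}}$ there, multiplying by a compactly supported cutoff $\chi\in\cB^{\{\om\}}(\R^n)$ with $\chi\equiv 1$ near $E$ yields a global element of $\cB^{\{\om\}}(\R^n)$ with the same jet on $E$, the far field contributing nothing.

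For $x$ with $0<d(x)<1$, at most $12^{2n}$ of the $\vh_{i,p}$ are nonzero near $x$, so $f$ is smooth there and Leibniz gives
\[
  \p^\al f(x) = \sum_i \sum_{\ga\le\al}\binom{\al}{\ga}\p^\ga\vh_{i,p}(x)\,\p^{\al-\ga}(T^{q_i}_{\hat x_i}F)(x).
\]
The key is to exploit $\sum_i\vh_{i,p}\equiv 1$, hence $\sum_i\p^\ga\vh_{i,p}\equiv 0$ for $\ga\neq 0$: fixing a cube $Q_{i_0}\ni x$, the terms with $\ga\neq 0$ may be rewritten with $\p^{\al-\ga}(T^{q_i}_{\hat x_i}F)(x)$ replaced by the \emph{difference} $\p^{\al-\ga}(T^{q_i}_{\hat x_i}F - T^{q_{i_0}}_{\hat x_{i_0}}F)(x)$. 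Splitting each such difference into a change of base point and a change of degree and invoking \Cref{lem:H1} and \Cref{lem:H2} bounds it by $L^{|\al-\ga|+1}\ddot S_{|\al-\ga|}\,h_{\ddot s}(Ld(x))$ and $(3LD/n)^{|\al-\ga|+1}\dddot S_{|\al-\ga|}\,h_{\dddot s}(3LDd(x))$, respectively, using \Cref{cor:Whitneycubes} to pass freely between $d(x)$, $d(x_i)$, and $\operatorname{diam}Q_i$. The single $\ga=0$ term is handled the same way after adding and subtracting $\p^\al(T^{q}_{\hat x}F)(x)$ with $q=2\ol\Ga_{\ddot s}(Ld(x))$ and applying \eqref{prop91} of \Cref{proposition9}.

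It then remains to assemble the pieces into a bound $|\p^\al f(x)|\le C'(\rh')^{|\al|}W^{x'}_{|\al|}$. The $\vh_{i,p}$-derivatives carry a factor $W^m_{|\ga|}$ together with $\exp(\tfrac{A_1}{p}\si^\star(\tfrac{b_1p}{A_2}\operatorname{diam}Q_i))$ from \Cref{Proposition6}(4), while the Taylor differences carry $\ddot S_{|\al-\ga|}$ (or $\dddot S$) and a factor $h_{\ddot s}(Ld(x))$ (or $h_{\dddot s}$). Since $\si$ is a heir of $\om$, \Cref{lem:heir} converts the $\ddot S,\dddot S$ factors into $W$-factors; combined with $W^m_{|\ga|}$ and the submultiplicativity $W^x_{j+k}\le W^{2x}_jW^{2x}_k$ of \Cref{lemma4}\eqref{eq:mW}, the $\al$-dependent product collapses to a single $W^{x'}_{|\al|}$. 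The whole point is then that the remaining scalar factor
\[
  \exp\!\Big(\tfrac{A_1}{p}\si^\star\big(c\,d(x)\big)\Big)\,h_{\ddot s}\big(Ld(x)\big)
\]
(and its $\dddot s$-analogue) is bounded uniformly in $x$: by the $\si$-analogue of \eqref{eq:key1} one has $\exp(\si^\star(t))\lesssim h_s(t/C)^{-C}$, and \Cref{lem:hmodgrowth} together with \eqref{eq:mge1} and \eqref{eq:mge2} relates $h_s$ to $h_{\ddot s}$; choosing $p$ large enough, relative to $C,D,A_1,A_2$ and the constants of \Cref{lem:hmodgrowth}, makes the gain from $h_{\ddot s}$ win. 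I expect this balancing — forcing $h_{\ddot s}(Ld(x))\to 0$ as $d(x)\to 0$ to absorb the blow-up of $\exp(\tfrac1p\si^\star)$ — to be the main obstacle, and it is precisely here that the interplay \eqref{eq:ext1} furnished by goodness is indispensable.

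Finally, to see $j^\infty_E f = F$, I fix $a\in E$ and let $x\to a$ with $x\notin E$. Writing $\p^\al f(x)-F^\al(\hat x)$ as the $\ga=0$ remainder $\p^\al(T^q_{\hat x}F)(x)-F^\al(\hat x)$, controlled by \eqref{prop92} as $O(d(x))$, plus the partition-of-unity differences of the previous step, each of which carries a factor $h_{\ddot s}(Ld(x))\to 0$ or $d(x)\to 0$, shows $\p^\al f(x)\to F^\al(a)$; the same estimates furnish the Whitney remainder conditions, so that setting $\p^\al f|_E = F^\al$ produces a function that is $C^\infty$ on $\R^n$ with $j^\infty_E f = F$. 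Together with the uniform bound of the previous paragraph and the cutoff from the first paragraph, this exhibits an $f\in\cB^{\{\om\}}(\R^n)$ with $j^\infty_E f = F$, which proves surjectivity.
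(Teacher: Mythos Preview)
Your proposal is correct and follows essentially the same route as the paper: the Dynkin-type extension \eqref{extensionformula}, the two preliminary estimates \Cref{lem:H1} and \Cref{lem:H2}, \Cref{proposition9}, and the balancing of $\exp(\tfrac1p\si^\star)$ against $h_{\dddot s}$ via \eqref{eq:key1} and \Cref{lem:hmodgrowth} with $p$ chosen proportional to $L$. The only organizational difference is that the paper subtracts the single polynomial $T_{\hat x}^{2\ol\Ga_{\ddot s}(Ld(x))}F$ from $f$ \emph{before} applying Leibniz (so every term, including $\ga=0$, is automatically a difference to which \Cref{lem:H1} and \Cref{lem:H2} apply verbatim), whereas you split off $\ga=0$ and use a discrete reference cube $Q_{i_0}$ for $\ga\neq 0$; since $T^{q_i}_{\hat x_i}F - T^{q_{i_0}}_{\hat x_{i_0}}F = (T^{q_i}_{\hat x_i}F - T^{q}_{\hat x}F) - (T^{q_{i_0}}_{\hat x_{i_0}}F - T^{q}_{\hat x}F)$ and both pieces are covered by \Cref{lem:H1}$+$\Cref{lem:H2}, your variant works too, just with one extra triangle inequality.
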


\begin{proof}
	We assume that the setup of \Cref{sec:prelimII} holds.		
	Assume 
	\begin{equation} \label{eq:L}
		L > \max\{C_1,C_2\} \, \rh	
	\end{equation}
	so that \eqref{H1} and \eqref{H2} are valid.

Let $p\in \N$ be fixed (and to be specified later). 
Let $\{\vh_{i,p}\}_{i\in \N}$ be the family of functions provided by \Cref{Proposition6}, relative to the 
family of cubes $\{Q_i\}_{i \in \N}$ from \Cref{Whitneycubes},
and let $r_0 = r_0(p)$ be the constant appearing in this proposition. 
Recall that $x_i$ denotes the center of $Q_i$.

We will show that an extension of class $\cB^{\{\om\}}$ of $F$ to $\R^n$ is provided by
\[
  f(x) := 
  \begin{cases}
    \sum_{i\in \N} \vh_{i,p}(x) \, T_{\hat x_i}^{2 \ol \Ga_{\ddot s}(L d(x_i)) } F(x),  & \text{ if } x \in \R^n \setminus E, \\
    F^0(x), & \text{ if } x \in  E.
  \end{cases}
\]
Clearly, $f$ is $C^\infty$ in $\R^n \setminus E$. 

In the following $\fW$ denotes the weight matrix associated with $\om$.

\begin{claim*} 
There exist constants $K_j=K_j(S)$, $j = 1,2,3$, such that the following holds.
  If $p = K_1 L$ and $L>K_2 \rh$, then there exist	
  weight sequences $W \in \fW$, $\tilde S \in \fS$ and a constant $M_1= M_1(S,L)>0$ such that 
	for all $x \in \R^n \setminus E$ with $d(x) <  (3B_1)^{-1} r_0$ and all $\al \in \N^n$,
  \begin{equation} \label{eqclaim2}
    |\p^\al (f - T_{\hat x}^{2 \ol \Ga_{\ddot s}(L d(x)) } F) (x)| \le C M_1^{|\al|+1}  W_{|\al|} h_{\tilde s}(LK_3 d(x)), 
  \end{equation}
  where $C$ and $\rh$ are the constants from \eqref{jets1} and \eqref{jets2} (and $B_1$ stems from \Cref{Whitneycubes}).
\end{claim*}

\paragraph{\emph{Proof of the claim}}

By the Leibniz rule,
\begin{align}
  \p^\al& (f - T_{\hat x}^{2 \ol \Ga_{\ddot s}(L d(x)) } F) (x) \notag
  \\
  &=
  \sum_{\be \le \al} \binom{\al}{\be} \sum_i \vh_{i,p}^{(\al-\be)}(x) \, 
  \p^\be (T_{\hat x_i}^{2 \ol \Ga_{\ddot s}(L d(x_i)) } F - T_{\hat x}^{2 \ol \Ga_{\ddot s}(L d(x)) } F) (x). \label{Leibniz}
\end{align}
Using \Cref{cor:Whitneycubes}, $\ddot s \le \dddot s$ which entails $h_{\ddot s} \le h_{\dddot s}$, and the fact that $h_{\dddot s}$ is increasing, we conclude from
\eqref{H1} and \eqref{H2}, that for $x \in Q_i^*$ with $d(x)<1$,
\begin{align*}
  |\p^\be (T_{\hat x_i}^{2 \ol \Ga_{\ddot s}(L d(x_i)) } F - T_{\hat x}^{2 \ol \Ga_{\ddot s}(L d(x)) } F) (x)|
  \le C    
    (6D L)^{|\be|+1}  \dddot S_{|\be|} \, h_{\dddot s }(3LD d(x)).
\end{align*}

By \Cref{Proposition6}, 
there exist $W = W(p) \in \fW$ and $M = M(p)>0$ such 
that, provided that $d(Q_i,E) \le r_0/B_1$, we have,
for all $\be \in \N^n$ and $x\in\R^n\setminus E$,
\begin{align*}
  |\varphi^{(\be)}_{i,p}(x)| &\le M W_{|\be|} 
  \exp\Big(\frac{A_1(n)}{p}\si^{\star}\Big(\frac{b_1 p}{A_2(n)} \on{diam} Q_i\Big)\Big)
  \\
  &\le M W_{|\be|} 
  \exp\Big(\frac{A_1(n)}{p}\si^{\star}\Big(\frac{b_1 p}{9 A_2(n)} d(x)\Big)\Big) 
  =: M W_{|\be|} \Pi(p,x),
\end{align*}
by \Cref{cor:Whitneycubes},
since $\si^\star$ is decreasing (recall that $\supp(\vh_{i,p})  \subseteq Q_i^*$). 

Let us assume that $x \in \R^n \setminus E$ satisfies $d(x) <  (3B_1)^{-1} r_0$. Then, if $x \in Q_i^*$, 
\[
d(Q_i,E) \le d(x_i) \le  3d(x) \le \frac{r_0}{B_1},
\]
by \Cref{cor:Whitneycubes}.
So, for all $i \in \N$, $x \in \R^n \setminus E$ with $d(x) <  (3B_1)^{-1} r_0$, and $\be \in \N^n$,
\[
  |\varphi^{(\be)}_{i,p}(x)|\le  M W_{|\be|}\, \Pi(p,x).
\]
By \Cref{lem:heir} and \Cref{lemma4}\eqref{lemma4(2)}, we may assume that $\dddot S \le D_1\, W$ for some constant $D_1$.
Then, by \eqref{Leibniz} and \Cref{Whitneycubes}, 
for $x \in \R^n \setminus E$ with $d(x) <  (3B_1)^{-1} r_0$,  
\begin{align*}
  |\p^\al& (f - T_{\hat x}^{2 \ol \Ga_{\ddot s}(L d(x)) } F) (x)| 
  \\
  &\le
  \sum_{\be \le \al} \frac{\al!}{\be!(\al-\be)!} 
  \cdot 12^{2n} \cdot M W_{|\al|-|\be|}\Pi(p,x)
  \cdot C    (6DL)^{|\be|+1}  \dddot S_{|\be|} \, h_{\dddot s}(3LD d(x))
  \\
  &\le  12^{2n}C  M
  \Big(\sum_{j=0}^{|\al|} \frac{|\al|!\, n^{|\al|+j}}{j!(|\al|-j)!}  
    (6DL)^{j+1} W_{|\al|-j}
        \dddot S_{j}\Big) \, \Pi(p,x)
  \, h_{\dddot s}(3LD d(x))    
  \\
  &\le 6\, 12^{2n} DL CD_1  M n^{|\al|} W_{|\al|}  
  \Big(\sum_{j=0}^{|\al|} \frac{|\al|!\, }{j!(|\al|-j)!}   
      (6D L n)^{j}\Big) \, 
      \Pi(p,x)
  \, h_{\dddot s}(3LD d(x))
  \\
  &=  6\, 12^{2n} DL CD_1  M (n (1 + 6DL n ))^{|\al|} W_{|\al|}  
  \Pi(p,x)
  \, h_{\dddot s}(3LD d(x)),           
\end{align*}
since $W_{|\al|-j} \dddot S_j \le D_1\, W_{|\al|-j} W_j \le D_1\, W_{|\al|}$, by \Cref{lem:basicM}(2). 
By \eqref{eq:key1}, for each $\tilde S \in \fS$ there is a constant $H\ge 1$ such that   
\[
	\Pi(p,x) 
	\le \Big(\frac{e}{h_{\tilde s}(\frac{b_1 p d(x)}{9 A_2(n) H})} \Big)^{\frac{A_1(n) H}{p}}.
\]
By \Cref{lem:hmodgrowth}, there is a constant $B\ge 1$ such that $h_{\dddot s}(t) \le h_{\tilde s}(Bt)^2$ provided that 
$\dddot S_{j+k} \le  \tilde S_j \tilde S_k$ for all $j,k$. That such $\tilde S \in \fS$ exists follows from 
\Cref{lemma4}\eqref{eq:mW}.  

Let us choose $L$ according to \eqref{eq:L} and such that $p := 27 \,A_2(n) H B  D L/b_1 \ge A_1(n) H$ is an integer. 
Then,  since $h_{\tilde s} \le 1$, 
\begin{align*}
	\Pi(p,x) \, h_{\dddot s}(3LD d(x))
	\le \frac{e h_{\dddot s}(3LD d(x))}{h_{\tilde s}(3  B  L D   d(x))} 
	\le e h_{\tilde s}(3  B  L D   d(x))
\end{align*}
 and we obtain \eqref{eqclaim2}. 
 (Note that $M$ depends on $p$, and hence on $L$, which results in the non-explicit dependence of $M_1$.)
 The claim is proved.

\medskip

\paragraph{\emph{End of proof}}

By \eqref{eq:ext1}, we have \eqref{eq:mass1} with $S' := \ddot S$.
We may additionally assume that $L \ge D_1 \rh$ for the corresponding constant $D_1$ in \Cref{proposition9}. 
So, by \eqref{prop91} and 
 \eqref{eqclaim2}, for $x \in \R^n \setminus E$ with $d(x) <  (3B_1)^{-1} r_0$ and $\al \in \N^n$, 
 \begin{align}
    |f^{(\al)}(x)| 
    &\le 
   |(T_{\hat x}^{2 \ol \Ga_{\ddot s}(L d(x))} F)^{(\al)}(x)| + |\p^\al (f - T_{\hat x}^{2 \ol \Ga_{\ddot s}(L d(x)) } F) (x)|
   \notag \\ \label{final}
   &\le 
    C M^{|\al|+1} W_{|\al|}
 \end{align}
 for a suitable constant $M=M(S,L)$; here we use that $h_{\tilde s} \le 1$.

 Let us fix a point $a \in E$ and $\al\in \N^n$. 
 Since $\ol \Ga_{\ddot s}(t) \to \infty$ as $t \to 0$ (see \Cref{basic}), 
 we have $|\al| < 2 \ol \Ga_{\ddot s}(L d(x))$ if $x \in \R^n \setminus E$ is 
 sufficiently close to $a$. Thus, as $x \to a$, 
 \begin{align*}
   &|f^{(\al)}(x) - F^{\al}(a)|
   \\
   &\le 
   |\p^\al (f - T_{\hat x}^{2 \ol \Ga_{\ddot s}(L d(x)) } F) (x)| + 
   |(T_{\hat x}^{2 \ol \Ga_{\ddot s}(L d(x))}F)^{(\al)}(x)-F^\al(\hat x)| + |F^\al(\hat x) - F^\al(a)| 
   \\
   & = O(h_{\tilde s}(L K_3 d(x))) + O(d(x)) + O(|\hat x - a|),
 \end{align*}
 by \eqref{jets2}, \eqref{prop92} (where $S' =\ddot S$), and \eqref{eqclaim2}. 
 Hence $f^{(\al)}(x) \to  F^{\al}(a)$ as $x \to a$.
 We may conclude that $f \in C^\infty(\R^n)$.
 After multiplication with a suitable cut-off function of class $\cB^{\{\om\}}$ with support in 
 $\{x : d(x) <  (3B_1)^{-1} r_0\}$,
 we find that $f \in \cB^{\{\om\}}(\R^n)$ thanks to \eqref{jets1}, \eqref{final}, and \Cref{lemma4}\eqref{5.10}.
 The theorem is proved.
\end{proof}

\begin{remark}
  The proof of Theorem \ref{extensiontheorem} shows that for each $\rh>0$ there exist $M(\rh)>0$ and 
  a continuous linear extension operator 
  $\cB^S_\rh(E) \to \cB^{W}_{M(\rh)}(\R^n)$. 
  This extension operator depends on 
  $\rh$ and $S$ (through $L$ and $p$) and in general there is no continuous extension operator 
  $\cB^{\{\si\}}(E) \to \cB^{\{\om\}}(\R^n)$, 
  cf.\ \cite{Petzsche88} and 
  \cite[p.\,223]{schmetsValdivia00}. 
\end{remark}

\subsection{The extension theorem for Denjoy--Carleman classes} \label{sec:CC}

In this section we prove a consequence of the extension theorem \ref{extensiontheorem}
for Denjoy--Carleman classes and compare it with the result of \cite{ChaumatChollet94}. 
The \emph{sine qua non} for the extension of jets of class $\cB^{\{M\}}$ to a function of class $\cB^{\{N\}}$ 
is the following condition:
\begin{equation} \label{eq:CC}
    \E C >0 \A k \in \N: \sum_{\ell \ge k} \frac{1}{\nu_\ell} \le C\, \frac{k}{\mu_k}.
\end{equation} 
(This is true, if $M$ has moderate growth, 
which we shall have to assume in the main result of this section, \Cref{thm:sequences}. 
In general, the right condition seems to be ($*$) in \cite{SchmetsValdivia04} which is equivalent to \eqref{eq:CC} 
provided that $M$ has moderate growth, see \cite[2.(c)]{SchmetsValdivia04}.)
In the next lemma we show that \eqref{eq:CC} implies \eqref{eq:heir} for the associated weight functions 
$\om_M$ and $\om_N$. This is based on \cite[Proposition 4.4]{Komatsu73} and was announced in 
\cite[p.\,39]{ChaumatChollet94}. We include a full proof for the convenience of the reader.

\begin{lemma} \label{lem:CCstrong}
  Let $M$ and $N$ be weight sequences satisfying $\mu \le \nu$ and \eqref{eq:CC}.
  Then
  \begin{equation} \label{eq:strong}
    \E C >0 \A t >0 : \int_1^\infty \frac{\om_N(t u)}{u^2}\, du \le C \om_M(t) + C. 
  \end{equation}

\end{lemma}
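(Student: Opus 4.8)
The plan is to reduce \eqref{eq:strong} to a combinatorial estimate on the counting functions of $\mu$ and $\nu$ via the staircase description of the associated functions. Writing $\om_N(t)=\sup_{k}\big(k\log t-\log N_k\big)$ and observing that the increments $\log t-\log\nu_k$ of the inner expression change sign exactly as $t$ crosses $\nu_k$, the supremum is attained at $k_N(t):=\#\{j\ge 1:\nu_j\le t\}$, so that $\om_N(t)=\sum_{\nu_j\le t}\log(t/\nu_j)$. After the substitution $v=tu$ the left-hand side of \eqref{eq:strong} is the heir $\ka_{\om_N}(t)=t\int_t^\infty \om_N(v)v^{-2}\,dv$ (cf.\ \eqref{eq:scion}), and integrating the staircase termwise (using $\int_a^\infty v^{-2}\log(v/\nu_j)\,dv=a^{-1}(\log(a/\nu_j)+1)$ for $a\ge\nu_j$) yields the exact identity
\[
  \ka_{\om_N}(t)=\om_N(t)+k_N(t)+t\sum_{\nu_j>t}\frac1{\nu_j}.
\]
Thus it suffices to bound each of the three summands by $C\om_M(t)+C$. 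The first is immediate: since $\mu_j\le\nu_j$ we have $\log(t/\mu_j)\ge\log(t/\nu_j)\ge 0$ whenever $\nu_j\le t$, with further nonnegative terms for those $j$ with $\mu_j\le t<\nu_j$, so $\om_N(t)\le\om_M(t)$ termwise.

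For the counting term I would use a dyadic decomposition. Fix $t$, set $k=k_N(t)$, and note every $j\le k$ satisfies $\mu_j\le\nu_j\le t$. Grouping these indices by $A_i=\{j\le k:t2^{-i-1}<\mu_j\le t2^{-i}\}$, the bound $\log(t/\mu_j)\ge i\log 2$ on $A_i$ together with the staircase lower bound $\om_M(t)\ge\sum_{j\le k}\log(t/\mu_j)$ gives $\sum_{i\ge 1}|A_i|\le\sum_{i\ge1}i|A_i|\le\om_M(t)/\log 2$. If $j_0$ is the least index with $\mu_{j_0}>t/2$, then monotonicity of $\mu$ forces $\{1,\dots,j_0-1\}=\bigcup_{i\ge1}A_i$, so $j_0-1\le\om_M(t)/\log2$, while the remaining ``cluster'' $A_0$ is controlled by \eqref{eq:CC}: since every $j\in A_0$ has $\nu_j\le t$ and $\mu_{j_0}>t/2$,
\[
  \frac{|A_0|}{t}\le\sum_{j\in A_0}\frac1{\nu_j}\le\sum_{\ell\ge j_0}\frac1{\nu_\ell}\le C\,\frac{j_0}{\mu_{j_0}}\le\frac{2Cj_0}{t}.
\]
Hence $k_N(t)=|A_0|+\sum_{i\ge1}|A_i|\le(2C+1)j_0\le(2C+1)(1+\om_M(t)/\log2)$.

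The tail $t\sum_{\nu_j>t}\nu_j^{-1}$ is the crux. Applying \eqref{eq:CC} at the index $k_N(t)+1$ bounds it by $Ct\,(k_N(t)+1)/\mu_{k_N(t)+1}$; when $\mu_{k_N(t)+1}$ is of order at least $t$ this is at most $C(k_N(t)+1)$ and we are done by the count bound. The genuine difficulty is the regime $\mu_{k_N(t)+1}\ll t$, in which this application of \eqref{eq:CC} is far from tight — it ``expects'' the $\nu_\ell$ just past $k_N(t)$ to be of size $\mu_{k_N(t)+1}$, whereas in fact they already exceed $t$. Here I would instead decompose the tail dyadically in the size of $\nu_j$, putting $C_i=\{j:2^it<\nu_j\le2^{i+1}t\}$ so that the tail is at most $\sum_{i\ge0}|C_i|2^{-i}$, and bound each $|C_i|$ by applying \eqref{eq:CC} at $\min C_i$; since $\nu_j>t$ throughout, \eqref{eq:CC} limits how many $\nu_j$ can populate each annulus, so the geometric weights $2^{-i}$ produce a convergent sum.

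The main obstacle — the step that must be executed carefully — is to see that this sum is controlled by $\om_M$ at the single scale $t$ and not by the values $\om_M(2^it)$ at larger scales, where $\om_M$ need satisfy no doubling estimate (indeed $\om_M$ need not even be a weight function here). This is precisely where the tension built into the hypotheses is used: any clustering of the $\mu_j$ near $t$, which is what would make $\om_M(t)$ small, simultaneously forces — through \eqref{eq:CC} with the correspondingly large $\mu_k$ in the denominator, together with $\mu\le\nu$ — the surviving mass of $\sum_{\nu_j>t}\nu_j^{-1}$ to be small. Quantifying this balance so that the dyadic sum collapses back to $C\om_M(t)+C$ is the technical heart of the lemma, and is exactly the estimate carried out for the case $M=N$ in \cite[Proposition 4.4]{Komatsu73}.
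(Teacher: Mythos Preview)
Your identity $\ka_{\om_N}(t)=\om_N(t)+k_N(t)+t\sum_{\nu_j>t}\nu_j^{-1}$ is correct, and your bounds for the first two summands are fine (with the minor caveat that $\{1,\dots,j_0-1\}\supseteq\bigcup_{i\ge1}A_i$ is only a containment, but this is the direction you need). The gap is the tail, and it is a real one: your dyadic scheme with $C_i=\{j:2^it<\nu_j\le2^{i+1}t\}$ does not close. Applying \eqref{eq:CC} at $\min C_i$ produces $\mu_{\min C_i}$ in the denominator, about which the hypotheses say nothing. If instead you write $|C_i|=k_N(2^{i+1}t)-k_N(2^it)$ and Abel-sum, your own bound $k_N(s)\lesssim 1+\om_M(s)$ yields
\[
t\sum_{\nu_j>t}\frac1{\nu_j}\;\lesssim\;1+\sum_{i\ge1}\om_M(2^it)\,2^{-i}\;\asymp\;1+\ka_{\om_M}(t),
\]
so you land on $\ka_{\om_M}$ rather than $\om_M$. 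Nothing here tells you $\om_M$ is strong, so this is not enough; your reference to \cite[Proposition 4.4]{Komatsu73} covers only $M=N$, which is exactly the strong case.

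The paper avoids this by working one derivative down. With $\Si_M(t)=\#\{j:\mu_j\le t\}$ one has $\om_M(t)=\int_0^t\Si_M(u)u^{-1}\,du$, and the target becomes $\int_t^\infty\Si_N(u)u^{-2}\,du\lesssim\Si_M(t)/t$. After the same integration by parts you did, the left side equals $t^{-1}\Si_N(t)+\sum_{\ell>q}\nu_\ell^{-1}$ with $q=\Si_N(t)$. The key move you are missing is to split this tail not at $q$ but at $p:=\Si_M(t)\ge q$: the block $\sum_{\ell=q+1}^{p}\nu_\ell^{-1}$ has each $\nu_\ell>t$ and at most $p$ terms, hence is $\le p/t$; and on the remaining block one applies \eqref{eq:CC} at $k=p+1$, where by construction $\mu_{p+1}>t$, giving $\sum_{\ell>p}\nu_\ell^{-1}\le C(p+1)/\mu_{p+1}\le C(p+1)/t$. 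Thus the whole thing is $\lesssim\Si_M(t)/t$. One then integrates this inequality in $t$ (a second integration by parts) to obtain $\ka_{\om_N}(s)\lesssim\om_M(s)+C$. The point is that the natural pointwise bound is against $\Si_M(t)$, not $\om_M(t)$, and integration is what trades $\Si_M$ for $\om_M$; trying to hit $\om_M(t)$ directly, as you do, forces you to control $\Si_M(t)$ by $\om_M(t)$, which is false in general.
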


\begin{proof}
  Let $\Si_M(t) := \max\{ k : \mu_k \le t\}$. 
  Then (cf.\ \cite[p.\,21]{Mandelbrojt52}) 
  \begin{equation} \label{eq:Siom}
    \om_M(t) = \int_0^t \frac{\Si_M(u)}{u}\, du.
  \end{equation}
  Similarly for $\Si_N$ and $\om_N$. 
  Since $\mu_k \le \nu_k$ for all $k$ we have $\Si_N \le \Si_M$. 
  By \eqref{eq:CC}, $N$ is non-quasianalytic and, by \cite[Lemma 4.1]{Komatsu73},  
  \[
    \frac{\Si_N(t)}{t} \to 0 \quad \text{ and } \quad  \frac{\om_N(t)}{t} \to 0 \quad \text{ as } \quad t \to \infty.
  \]
  Fix $t \ge \nu_1$ and
  set $p := \Si_M(t)$ and $q := \Si_N(t)$; then $p \ge q \ge 1$, 
  $\nu_{q+1}>t$, and $\mu_{p+1}>t$.
  Integration by parts yields 
  \begin{align*}
    \int_t^\infty \frac{\Si_N(u)}{u^2}\, du 
    &= \frac{\Si_N(t)}{t} + \int_t^\infty \frac{d \Si_N(u) }{u} 
    \\
    &= \frac{\Si_N(t)}{t} + \sum_{\ell = q+1}^\infty \frac{1}{\nu_{\ell}}
    \\
    &= \frac{\Si_N(t)}{t} + \sum_{\ell = q+1}^p \frac{1}{\nu_{\ell}} + \sum_{\ell = p+1}^\infty \frac{1}{\nu_{\ell}}
    \\
    &\le \frac{\Si_N(t)}{t} +  \frac{p}{\nu_{q+1}} + C\, \frac{p+1}{\mu_{p+1}}
    \\
    &\le (2 + 2 C) \frac{\Si_M(t)}{t} \quad \text{ for } t \ge \nu_1.
  \end{align*}
  Consequently, by integrating,
  \begin{align*}
    \int_{\nu_1}^s \int_t^\infty \frac{\Si_N(u)}{u^2}\, du\, dt 
    &= 
    s \int_s^\infty \frac{\Si_N(u)}{u^2}\, du  + \int_{\nu_1}^s \frac{\Si_N(u)}{u}\, du 
    - \nu_1 \int_{\nu_1}^\infty \frac{\Si_N(u)}{u^2}\, du
    \\
    &\le 
    (2 + 2 C) \int_{\nu_1}^s \frac{\Si_M(t)}{t}\, dt
    \\
    &\le 
    (2 + 2 C) \om_M(s) \quad \text{ for } s \ge \nu_1.
  \end{align*}
  It follows that, for $s \ge \nu_1$,  
  \begin{align*}
    s \int_s^\infty \frac{\Si_N(u)}{u^2}\, du  \le (2 + 2 C) \om_M(s) + \nu_1 \int_0^\infty \frac{\Si_N(u)}{u^2}\, du
  \end{align*}
  Clearly, this also holds for all $0 <s<\nu_1$. 
  By partial integration and \eqref{eq:Siom},
  \begin{equation*}
    \int_s^\infty \frac{\om_N(u)}{u^2} \, du  = \frac{\om_N(s)}{s} + \int_s^\infty \frac{\Si_N(u)}{u^2}\, du.
  \end{equation*}
  Thus, using $\om_N \le \om_M$,
  \begin{align*}
    s \int_s^\infty \frac{\om_N(u)}{u^2}\, du  \le (3 + 2 C) \om_M(s) + \nu_1 \int_0^\infty \frac{\Si_N(u)}{u^2}\, du
  \end{align*}
  which implies \eqref{eq:strong}.
\end{proof}

\begin{lemma} \label{lem:MomM}
  Let $M$ be a weight sequence of moderate growth
  such that $\liminf_{k \to \infty} m_k^{1/k}>0$ and $\liminf_{k \to \infty} \mu_{Qk}/\mu_k >1$ 
  for some $Q \in \N_{\ge 2}$.   
  Then $\cB^{\{M\}}(\R^n) = \cB^{\{\om_M\}}(\R^n)$ and $\cB^{\{M\}}(E) = \cB^{\{\om_M\}}(E)$ for each compact 
  $E \subseteq \R^n$. 
\end{lemma}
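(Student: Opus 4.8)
The plan is to exhibit $\cB^{\{M\}}$ as one link of the weight-matrix chain describing $\cB^{\{\om_M\}}$, and then to use moderate growth to collapse that chain. Under the stated $\liminf$-hypotheses $\om_M$ is a weight function (as recalled in the paragraph defining $\om_M$), so it carries an associated weight matrix $\fW=\{W^x\}_{x>0}$ and \Cref{representation} applies to it. Setting $\vh(t)=\om_M(e^t)=\sup_{k}(kt-\log M_k)$, I observe that $\vh$ is exactly the Legendre conjugate of the (convex, since $\mu$ is increasing) piecewise-linear interpolation of $k\mapsto\log M_k$.

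The key identity is $W^1=M$. Indeed, $\vh^*(k)=\sup_{s\ge0}\big(ks-\vh(s)\big)$ and, by biconjugation of the log-convex sequence $\log M$, one has $\vh^*(k)=\log M_k$ for every $k\in\N$: the bound $\vh^*(k)\le\log M_k$ is immediate from $\vh(s)\ge ks-\log M_k$, while the reverse holds because for $s\in[\log\mu_k,\log\mu_{k+1}]$ the supremum defining $\vh(s)$ is attained at the index $k$, so that $ks-\vh(s)=\log M_k$ there. Hence $W^1_k=\exp(\vh^*(k))=M_k$. This is the heart of the matter, but it is merely the standard duality between a log-convex weight sequence and its associated function; the only care required is to match the normalization of the Young conjugate.

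With $W^1=M$ at hand, moderate growth of $M$ says precisely that the member $W^1$ of $\fW$ has moderate growth. By \Cref{representation} this is equivalent to \eqref{om6}, which yields both $\cB^{\{\om_M\}}(\R^n)=\cB^{\{W^x\}}(\R^n)$ for every $x>0$ and that each $W^x$ has moderate growth. In particular all the classes $\cB^{\{W^x\}}(\R^n)$ coincide, so the defining sequences $W^x$ are mutually equivalent, and each is equivalent to $W^1=M$. The first assertion is then immediate: $\cB^{\{M\}}(\R^n)=\cB^{\{W^1\}}(\R^n)=\cB^{\{\om_M\}}(\R^n)$.

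For the jet spaces I would first note that equivalence of weight sequences passes to ultrajets: if $N_k^{1/k}\approx M_k^{1/k}$, then the estimates defining $\cB^{N}_{\rh}(E)$ and $\cB^{M}_{\rh'}(E)$ transform into one another after absorbing the geometric constant into $\rh$, so $\cB^{\{M\}}(E)=\cB^{\{N\}}(E)$. Applying this with $N=W^x$ gives $\cB^{\{W^x\}}(E)=\cB^{\{M\}}(E)$ for every $x>0$, and therefore $\cB^{\{\om_M\}}(E)=\on{ind}_{x>0}\cB^{\{W^x\}}(E)=\cB^{\{M\}}(E)$. The only genuinely nontrivial step is the verification of $W^1=M$; the remainder is bookkeeping with \Cref{representation} and the equivalences it produces.
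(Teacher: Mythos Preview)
Your proof is correct and follows essentially the same route as the paper: both establish $W^1=M$ via the biconjugation identity $M_k=e^{\vh_{\om_M}^*(k)}$, then invoke \Cref{representation} together with moderate growth to collapse the weight matrix to a single sequence. The only cosmetic difference is in the jet-space step: the paper cites \cite[(5.11)]{RainerSchindl12} directly, whereas you argue via equivalence of all the $W^x$ and the (straightforward) fact that equivalent weight sequences yield the same ultrajet space---both are valid and amount to the same thing.
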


\begin{proof}
  Since $M$ is log-convex, we have (cf.\ e.g.\ \cite{Mandelbrojt52})
  \begin{equation} \label{eq:MW1}
    M_k = \sup_{t>0} \frac{t^k}{e^{\om_M(t)}} = \sup_{s \in \R} \frac{e^{sk}}{e^{\vh_{\om_M}(s)}} = e^{\vh_{\om_M}^*(k)}.
  \end{equation}
  So the first identity follows from \Cref{representation}, since $\om_M$ is a weight function. 
  The second identity is a consequence of 
  \cite[(5.11)]{RainerSchindl12} and \Cref{def:ultrajets}. 
\end{proof} 

\begin{theorem} \label{thm:sequences}
  Let $M$ and $N$ be weight sequences of moderate growth  
  satisfying $\mu \lesssim \nu$.  
  Assume that both the sequences $\mu_k/k$ and $\nu_k/k$ tend to infinity and are almost increasing in the sense that 
  \begin{equation} \label{eq:condai}
    \E C >0 \A 1 \le j \le k : \frac{\mu_j}{j} \le C \, \frac{\mu_k}{k}.
  \end{equation}
  Then the following conditions are equivalent:
  \begin{enumerate}
    \item For every compact $E \subseteq \R^n$ the jet mapping $j^\infty_E : \cB^{\{N\}}(\R^n) \to \cB^{\{M\}}(E)$ 
    is surjective. 
    \item There is a $C>0$ such that $\int_{1}^\infty \frac{\om_N(tu)}{u^2}\,du \le C\,\om_M(t) + C$ for all $t>0$.
    \item There is a $C>0$ such that $\sum_{\ell \ge k} \frac{1}{\nu_\ell} \le C\, \frac{k}{\mu_k}$ for all $k \in \N$.
  \end{enumerate} 
\end{theorem}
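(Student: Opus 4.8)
The plan is to pass from the weight sequences to the associated weight functions and then invoke \Cref{main} together with the counting-function estimates of \Cref{lem:CCstrong}. First I would verify that \Cref{lem:MomM} applies to both $M$ and $N$. Moderate growth is assumed; $\liminf_k m_k^{1/k}>0$ and $\liminf_k n_k^{1/k}>0$ hold because $\mu_k/k\to\infty$ and $\nu_k/k\to\infty$; and the condition $\liminf_k\mu_{Qk}/\mu_k>1$ (and its analogue for $\nu$) follows from \eqref{eq:condai}: if \eqref{eq:condai} holds with constant $C$, then $\mu_{Qk}/\mu_k\ge Q/C>1$ as soon as $Q>C$. Hence $\om_M$ and $\om_N$ are weight functions, and $\cB^{\{M\}}=\cB^{\{\om_M\}}$, $\cB^{\{N\}}=\cB^{\{\om_N\}}$ both on $\R^n$ and on every compact $E$. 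Consequently condition (1) is equivalent to the surjectivity of $j^\infty_E:\cB^{\{\om_N\}}(\R^n)\to\cB^{\{\om_M\}}(E)$ for all compact $E$. I would also observe that each of (1), (2), (3) forces $\om_N$ to be non-quasianalytic --- (2) and (3) because the defining integral and series converge, and (1) through the point-case necessity (the implication (1)$\Rightarrow$(2) of \Cref{main}, which needs no extra hypotheses) --- so we may assume $\om_N$ non-quasianalytic throughout.

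The crux of the translation is that the almost-increasing hypotheses turn $\om_N$ into a concave weight function (up to equivalence) and $\om_M$ into a \emph{good} one. The device is the identity $m_k^{1/k}\asymp\mu_k/k$, whose two halves are \eqref{mucompare} and moderate growth, and likewise $n_k^{1/k}\asymp\nu_k/k$. Thus \eqref{eq:condai} for $\nu$ says exactly that $n_k^{1/k}$ is almost increasing; since $N=W^1$ has moderate growth, condition \eqref{om6} holds for $\om_N$ and every matrix sequence $W^x$ of $\om_N$ is equivalent to $N$ (\Cref{representation}), so this is condition \ref{thm:subadditive}\eqref{eq:m6} and $\om_N$ is equivalent to its least concave majorant. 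For $\om_M$, moderate growth yields $\vt^x_k\lesssim(W^x_k)^{1/k}$, i.e.\ \eqref{eq:m5}, while \eqref{eq:condai} for $\mu$ makes $\om_M$ concave-equivalent by the same argument; \Cref{thm:suffgood} then shows $\om_M$ is good. Finally $\om_M(t)=o(t)$ because $m_k^{1/k}\to\infty$ (the remark after \Cref{lem:star}).

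With these verifications the equivalence (1)$\Leftrightarrow$(2) is precisely \Cref{main} applied to $\om=\om_N$ and $\si=\om_M$, whose second condition reads $\int_1^\infty\om_N(tu)u^{-2}\,du\le C\om_M(t)+C$, that is, our (2). It remains to prove (2)$\Leftrightarrow$(3). The implication (3)$\Rightarrow$(2) is \Cref{lem:CCstrong}, the hypothesis $\mu\le\nu$ there being replaced by $\mu\lesssim\nu$ at the cost of the harmless shift $\om_N(t)\le\om_M(C_0t)$. For (2)$\Rightarrow$(3) I would reverse the computation of \Cref{lem:CCstrong}: partial integration gives $\int_t^\infty\om_N(u)u^{-2}\,du=\om_N(t)/t+\Si_N(t)/t+\sum_{\ell>\Si_N(t)}\nu_\ell^{-1}$, and evaluating (2) at $t=\nu_k$ yields $\sum_{\ell\ge k}\nu_\ell^{-1}\lesssim\om_M(\nu_k)/\nu_k+\nu_k^{-1}$. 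I would then use that $\om_M(t)/t$ is almost decreasing --- the concavity of $\om_M$ just established --- together with $\nu_k\gtrsim\mu_k$ to pass to $\om_M(\mu_k)/\mu_k$, and finally moderate growth of $M$ in the form $\om_M(\mu_k)\le\log(\mu_k^k/M_k)\lesssim k$ to conclude $\sum_{\ell\ge k}\nu_\ell^{-1}\lesssim k/\mu_k$.

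The main obstacle is the second paragraph: concavity and goodness are \emph{not} invariant under equivalence of weight functions (see \Cref{rem:good}(2)), so it does not suffice to know that the classes coincide --- one must establish these properties for the specific functions $\om_M$ and $\om_N$. This is exactly where the moderate-growth identity $m_k^{1/k}\asymp\mu_k/k$ is indispensable, as it alone lets the ratio condition \eqref{eq:condai} be read as an almost-increasing condition on the $k$-th roots, hence as \ref{thm:subadditive}\eqref{eq:m6}. The backward estimate (2)$\Rightarrow$(3) is the other delicate point, since (2) bounds the $\om_N$-integral only from above by $\om_M$; the argument must therefore route the upper bound on the tail series through the almost-monotonicity of $\om_M(t)/t$ rather than through any direct bound on $\om_M(\nu_k)$.
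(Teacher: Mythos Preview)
Your proposal is correct and largely follows the paper's route: you verify \Cref{lem:MomM} applies, reduce to the weight functions $\om_M,\om_N$, and check that $\om_N$ is (equivalent to) a concave weight function and $\om_M$ is a good heir, so that \Cref{main} (equivalently \Cref{extensiontheorem}) yields $(2)\Rightarrow(1)$; and $(3)\Rightarrow(2)$ is \Cref{lem:CCstrong}. The paper does exactly this for these implications.

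The genuine difference lies in how the cycle is closed. The paper does \emph{not} prove $(2)\Rightarrow(3)$ at all; instead it proves $(1)\Rightarrow(3)$ by citing an external result, namely \cite[Proposition 27]{ChaumatChollet94} (or alternatively \cite[Theorem 1.1]{SchmetsValdivia04}), which gives the necessity of \eqref{eq:CC} directly from the surjectivity of the Borel map. Your argument for $(2)\Rightarrow(3)$---evaluating the integral identity from the proof of \Cref{lem:CCstrong} at $t=\nu_k$, using that $\om_M(t)/t$ is almost decreasing (from the concavity-equivalence you established), and then the moderate-growth bound $\om_M(\mu_k)=\log(\mu_k^k/M_k)\lesssim k$---is a valid self-contained alternative that avoids the external citation. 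This is a small but genuine gain in self-containment; on the other hand, the paper's route via $(1)\Rightarrow(3)$ makes no use of the almost-increasing hypothesis on $\mu$ or of moderate growth of $M$ for that particular step, so it isolates more sharply where each assumption is needed.
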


\begin{proof}
  (3) $\Rightarrow$ (2) follows from \Cref{lem:CCstrong} since we may assume without loss of generality that $\mu \le \nu$ 
  (otherwise we replace $(N_k)$ by an equivalent sequence $(C^k N_k)$).

  (2) $\Rightarrow$ (1) Since $M$ has moderate growth, $\mu_k/k \lesssim m_k^{1/k}$ tends to infinity and hence 
  $\om_M(t) = o(t)$ as $t \to \infty$. By (2), $\om_M$ is a heir of $\om_N$.
  The condition \eqref{eq:condai} (and \eqref{eq:MW1}) guarantees that $\om_M$ is a good heir of $\om_N$. 
  Moreover, \eqref{eq:condai} for $\nu$ implies that $\om_N$ is equivalent to its least concave majorant; 
  this follows from \Cref{thm:subadditive} and \Cref{thm:suffgood} since $\nu_k \lesssim N_k^{1/k}$ as $N$ has moderate growth 
  (cf.\ \cite[Lemma 2.2]{RainerSchindl16a}). 
  So \Cref{extensiontheorem} and \Cref{lem:MomM} entail (1).

  (1) $\Rightarrow$ (3) This follows from \cite[Proposition 27]{ChaumatChollet94}; an inspection of its proof 
  shows that the general assumption of \cite{ChaumatChollet94} that all sequences are \emph{strongly log-convex} 
  (i.e.\ $m_k = M_k/k!$ is log-convex)
  is not needed.
  Alternatively, it is a consequence of \cite[Theorem 1.1]{SchmetsValdivia04} thanks to \cite[2.(c)]{SchmetsValdivia04}.  
\end{proof}

\begin{remark}
  By \Cref{thm:suffgood},
  the condition \eqref{eq:condai} can be replaced by 
  \begin{equation} \label{eq:condai2}
    \E C >0 \A 1 \le j \le k : m_j^{1/j} \le C \, m_k^{1/k}.
  \end{equation}
  \Cref{thm:sequences} should be compared with \cite[Theorem 30]{ChaumatChollet94}. 
  In the latter the sequences $M$ and $N$ are assumed to be strongly log-convex which entails the weaker condition 
  \eqref{eq:condai} (and \eqref{eq:condai2}). On the other hand, in \cite[Theorem 30]{ChaumatChollet94} moderate growth 
  of $N$ is not required.
\end{remark}

\subsection{Strong and good weight functions} \label{sec:strong}

In view of \Cref{q:concave} and the fact that every strong weight function is equivalent to a concave one it is 
natural to ask:

\begin{question} \label{q:strong}
  Is every strong weight function equivalent to a good one?
\end{question}

By \Cref{thm:suffgood}, this holds true if the associated weight matrix satisfies \eqref{eq:m5}. 
We do not know the general answer to this question. However, we can provide some more information on strong weight functions.
We start with a corollary to \Cref{lem:CCstrong}.

\begin{corollary} \label{cor:weakcond}
  Let $\om$ be a weight function and let $\fW = \{W^x\}_{x>0}$ be the associated weight matrix.
  Then $\om$ is a strong weight function provided that 
  \begin{equation} \label{eq:weakcond}
    \E x,y>0 : \sum_{\ell \ge k} \frac{1}{\vt^y_\ell} \lesssim \frac{k}{\vt^x_k}.
  \end{equation}
\end{corollary}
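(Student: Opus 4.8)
The plan is to reduce the statement to \Cref{lem:CCstrong} applied to a suitable pair of sequences from the associated weight matrix $\fW$, and then to transport the resulting integral inequality from the functions $\om_{W^x}$ back to $\om$ itself. Recall that, in the notation of the table, $\vt^x$ is exactly the quotient sequence of the weight sequence $W^x$, so $\vt^x$ plays the role of ``$\mu$'' for $M = W^x$. In particular, \eqref{eq:weakcond} is precisely condition \eqref{eq:CC} for the pair $(W^x, W^y)$.

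First I would arrange that $x \le y$. If \eqref{eq:weakcond} holds for some $x,y$, then, since $\vt^a \le \vt^b$ whenever $a \le b$ by \Cref{lemma4}\eqref{lemma4(2)}, it also holds (after possibly enlarging the constant) for the ordered pair $\min\{x,y\} \le \max\{x,y\}$: if $x>y$, one simply estimates $\sum_{\ell \ge k} 1/\vt^x_\ell \le \sum_{\ell \ge k} 1/\vt^y_\ell \le C\,k/\vt^x_k \le C\,k/\vt^y_k$. Hence we may assume $x \le y$ and set $M := W^x$, $N := W^y$. Then $\mu = \vt^x \le \vt^y = \nu$ by \Cref{lemma4}\eqref{lemma4(2)}, and both $M$ and $N$ are weight sequences by \Cref{lemma4}(1). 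As \eqref{eq:weakcond} is exactly \eqref{eq:CC} for this pair, \Cref{lem:CCstrong} supplies a constant $C_0$ with
\begin{equation*}
  \int_1^\infty \frac{\om_N(tu)}{u^2}\, du \le C_0\,\om_M(t) + C_0, \quad t>0.
\end{equation*}

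Next I would transfer this estimate to $\om$. By \cite[Lemma 5.7]{RainerSchindl12}, every member of $\fW$ has associated function equivalent to $\om$; in particular $\om(t) = O(\om_N(t))$ and $\om_M(t) = O(\om(t))$ as $t \to \infty$. Since all functions involved are continuous and increasing, these upgrade to global affine bounds $\om \le C_1 \om_N + C_1$ and $\om_M \le C_1 \om + C_1$ on $[0,\infty)$. Inserting the first bound, using $\int_1^\infty u^{-2}\,du = 1$, then the displayed inequality, and finally the second bound yields
\begin{equation*}
  \int_1^\infty \frac{\om(tu)}{u^2}\, du \le C_1 \int_1^\infty \frac{\om_N(tu)}{u^2}\, du + C_1 \le C_1\big(C_0\,\om_M(t) + C_0\big) + C_1 \le C\,\om(t) + C
\end{equation*}
for a suitable $C$. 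This is precisely the integral condition characterizing strong weight functions in \Cref{thm:preservingclass}. For completeness one checks that $\om$ is non-quasianalytic — taking $k=1$ in \eqref{eq:weakcond} gives $\sum_\ell 1/\vt^y_\ell < \infty$, so $W^y$, and hence $\om$, is non-quasianalytic — whence $\ka_\om$ is a genuine weight function with $\ka_\om(t) = o(t)$; combining $\om \le \ka_\om$ with the inequality just proved shows $\om$ is indeed a heir of itself, i.e.\ strong.

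The computations are routine; the only point requiring attention is the monotonicity reduction to $x \le y$, which is what makes the hypothesis $\mu \le \nu$ of \Cref{lem:CCstrong} available. No genuine obstacle arises, since the substantive content has already been packaged into \Cref{lem:CCstrong} and the equivalence $\om \sim \om_{W^x}$.
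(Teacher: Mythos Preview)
Your proof is correct and follows essentially the same route as the paper: apply \Cref{lem:CCstrong} to the pair $W^x, W^y$ and then use the equivalence $\om \sim \om_{W^x}$ (from \cite[Lemma 5.7]{RainerSchindl12}) to transfer the integral inequality back to $\om$. You are more explicit than the paper about reducing to the case $x\le y$ so that the hypothesis $\mu\le\nu$ of \Cref{lem:CCstrong} is met, and about checking non-quasianalyticity, but these are just details the paper leaves implicit.
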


\begin{proof}
  This is immediate from \Cref{lem:CCstrong} since $\om$ is equivalent to $\om_{W^x}$ for all $x>0$, 
  by \cite[Lemma 5.7]{RainerSchindl12}, and \eqref{eq:strong} is invariant under equivalence.
\end{proof}

\begin{remark} \label{rem:weakcond}
  Together with \Cref{thm:preservingclass} and \cite[Corollary 5.13]{RainerSchindl16a}, the corollary implies that 
  actually \eqref{eq:weakcond} is equivalent to $\om$ being strong, provided that the associated weight matrix 
  satisfies \eqref{eq:m5}. We do not know if this is always true.   
\end{remark}

The next lemma is based 
on a construction from \cite{RainerSchindl16a} 
which stems from an idea in \cite[Proposition 1.1]{Petzsche88}.

\begin{lemma} \label{lem:strongmatrix}
 Let $\fM = \{M^x\}_{x>0}$ be a collection of non-quasianalytic weight sequences satisfying $\mu^x \lesssim \mu^y$ whenever 
 $x \le y$. Assume that 
 \begin{equation}
    \A x >0 \E y>0 \E C>0 : \sum_{\ell \ge k} \frac{1}{\mu^y_\ell} \le C \, \frac{k}{\mu^x_k}.
  \end{equation}
  Then there exists a collection of non-quasianalytic weight sequences $\fS = \{S^x\}_{x>0}$ 
  with the following properties:
  \begin{enumerate}
     \item $1\le \si^x_k/k$ is increasing to $\infty$ for all $x>0$.
     \item $\si^x \lesssim \si^y$ whenever $x \le y$.
     \item $\A x >0 \E y>0 \E C>0 : \sum_{\ell \ge k} \frac{1}{\si^y_\ell} \le C \, \frac{k}{\si^x_k}$.
     \item $\A x >0 \E y>0 : \si^x \lesssim \mu^y$ and $\A x >0 \E y>0 : \mu^x \lesssim \si^y$. 
   \end{enumerate} 
\end{lemma}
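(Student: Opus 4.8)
The plan is to construct $\fS$ by a \emph{prefix-regularization} of the quotients $\mu^x_k/k$, which forces $\si^x_k/k$ to be non-decreasing while keeping $S^x$ comparable to the original family. Concretely, I would set
\[
  \si^x_k := k\,\max_{1\le\ell\le k}\frac{\mu^x_\ell}{\ell}\quad(k\ge 1),\qquad \si^x_0:=1,
\]
and let $S^x$ be the weight sequence determined by $\si^x$ through \eqref{def:M}. Writing $P^x_k:=\si^x_k/k=\max_{1\le\ell\le k}\mu^x_\ell/\ell$, the sequence $P^x_k$ is non-decreasing by construction, satisfies $P^x_k\ge\mu^x_1\ge 1$, and $P^x_k\ge\mu^x_k/k\to\infty$ (the last limit because $\mu^x$ increasing with $\sum_k 1/\mu^x_k<\infty$ forces $\mu^x_k/k\to\infty$). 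This yields (1) at once and shows each $S^x$ is a log-convex weight sequence. Moreover $\si^x_k=kP^x_k\ge\mu^x_k$, so $1/\si^x_k\le 1/\mu^x_k$ and hence $\sum_k 1/\si^x_k\le\sum_k 1/\mu^x_k<\infty$: every $S^x$ is non-quasianalytic. Property (2) is equally immediate, since $\mu^x\lesssim\mu^y$ for $x\le y$ gives $P^x_k\lesssim P^y_k$ and thus $\si^x\lesssim\si^y$.

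The technical heart is the following mixed comparison, which I would extract from the hypothesis:
\[
  \A x>0\ \E y\ge x\ \E C\ge 1\ \A a\le b:\quad \frac{\mu^x_a}{a}\le C\,\frac{\mu^y_b}{b}.
\]
Given $x$, I would use the hypothesis to pick $y\ge x$ (enlarging the index it produces, which is harmless by monotonicity) with $\sum_{m\ge p}1/\mu^y_m\le C_0\,p/\mu^x_p$ for all $p$. If $b\ge 2a$, bound the tail from below by $\sum_{m=a}^b 1/\mu^y_m\ge(b-a+1)/\mu^y_b\ge (b/2)/\mu^y_b$, whence $(b/2)/\mu^y_b\le C_0 a/\mu^x_a$ gives $\mu^x_a/a\le 2C_0\,\mu^y_b/b$. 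If $a\le b<2a$, instead use $\mu^y_b\ge\mu^y_a$ and $\mu^x_a\lesssim\mu^y_a$ together with $b<2a$. I expect this two-case estimate to be the main obstacle, since it is the only place where the strong-matrix hypothesis is genuinely used.

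Granting the mixed comparison, the remaining properties are bookkeeping. The first half of (4), namely $\A x\E y:\si^x\lesssim\mu^y$, is exactly the mixed comparison read as $P^x_k=\max_{\ell\le k}\mu^x_\ell/\ell\le C\mu^y_k/k$, i.e. $\si^x_k\le C\mu^y_k$. The second half, $\A x\E y:\mu^x\lesssim\si^y$, is trivial: taking $y\ge x$ gives $\mu^x_k\lesssim\mu^y_k\le\si^y_k$. Finally, for (3) I would chain the two inputs: given $x$, first use (4) to choose $x'$ with $\si^x_k\le C_1\mu^{x'}_k$, then apply the hypothesis to $x'$ to choose $y$ with $\sum_{\ell\ge k}1/\mu^y_\ell\le C_0 k/\mu^{x'}_k$; since $\si^y_\ell\ge\mu^y_\ell$,
\[
  \sum_{\ell\ge k}\frac{1}{\si^y_\ell}\le\sum_{\ell\ge k}\frac{1}{\mu^y_\ell}\le C_0\,\frac{k}{\mu^{x'}_k}\le C_0 C_1\,\frac{k}{\si^x_k},
\]
which is precisely (3). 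This completes the plan; the only nontrivial ingredient is the mixed comparison of the second paragraph, everything else being a direct consequence of the defining inequality $\si^x_k\ge\mu^x_k$ and the monotonicity $\si^x_k/k=\max_{\ell\le k}\mu^x_\ell/\ell$.
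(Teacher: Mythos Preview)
Your argument is correct, and it proceeds by a genuinely different regularization than the one in the paper. The paper applies the Petzsche-type construction
\[
  \ta_k := \frac{k}{\mu_k} + \sum_{j\ge k}\frac{1}{\mu_j},\qquad \si_k := \frac{\ta_1 k}{\ta_k},
\]
to each $\mu^x$; this yields $\si^x\lesssim\mu^x$ (the new quotients sit \emph{below} the old ones) and comes with a maximality property (any $\mu'\lesssim\mu$ with $\sum_{j\ge k}1/\mu_j\lesssim k/\mu'_k$ satisfies $\mu'\lesssim\si$), from which (2)--(4) are deduced. Since $\si^x\lesssim\mu^x$, non-quasianalyticity of $S^x$ is not automatic, and the paper must redefine $S^x:=S^{x_0}$ for small $x$ to repair this.

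Your prefix-max regularization $\si^x_k:=k\max_{\ell\le k}\mu^x_\ell/\ell$ goes the other way: it forces $\si^x\ge\mu^x$, so non-quasianalyticity and the inclusion $\mu^x\lesssim\si^x$ are free, while the nontrivial direction $\si^x\lesssim\mu^y$ is exactly your ``mixed comparison''. That step is the analogue of the paper's maximality property, and your two-case proof of it (splitting at $b\ge 2a$ versus $a\le b<2a$) is clean and correct. Overall your route is more elementary and entirely self-contained, at the cost of not producing the universal property that the Petzsche construction carries; for the purposes of this lemma that property is not needed, so nothing is lost.
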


\begin{proof}
With any positive increasing sequence
$\mu=(\mu_k)$ satisfying $\mu_0 =1$ and $\sum_k 1/\mu_k < \infty$ we may associate a 
positive sequence $\si = \si(\mu)$ in the following way: 
we define 
  \begin{equation*} 
    \ta_k := \frac{k}{\mu_k} + \sum_{j\ge k} \frac 1 {\mu_j}, \quad k \ge 1,
  \end{equation*}
and set 
  \begin{equation*}     
    \si_k := \frac{\ta_1 k}{\ta_k}, \quad k \ge 1, \quad \si_0 := 1.  
  \end{equation*}
Then, cf.\ \cite[Lemma 4.2]{RainerSchindl16a},
  \begin{itemize}
    \item $\si \lesssim \mu$.
    \item $\sum_{j\ge k}  1 /\mu_j \lesssim k/\si_k$.
    \item $1 \le  \si_k/k$ is increasing to $\infty$ (in particular, $S$ is strongly log-convex). 
    \item If $\mu'$ is an increasing positive sequence satisfying $\mu'  \lesssim \mu$ 
    and $\sum_{j\ge k}  1 /\mu_j \lesssim k/\mu'_k$, then $\mu' \lesssim \si$.  
  \end{itemize} 
If we apply this construction to the sequences in $\fM$ we obtain a collection of weight sequences $\fS = \{S^x\}_{x>0}$ 
which satisfies the properties (1)--(4).   
By (4), there exists $x_0>0$ such that $S^{x}$ is non-quasianalytic for all $x\ge x_0$. 
If we set $S^x := S^{x_0}$ for all $x<x_0$, the collection $\fS$ is as desired.  
\end{proof}

\begin{theorem}[Strong weight functions]
  Let $\om$ be a weight function.
  Assume that the associated weight matrix $\fW= \{W^x\}_{x >0}$ satisfies
  \begin{equation} \label{gmg} 
    \A x>0 \E y>0  : \vt_k^x \lesssim (W^y_k)^{1/k}. 
  \end{equation}
  Then the following conditions are equivalent:
  \begin{enumerate}
    \item $\om$ is strong.
    \item $\forall x >0 \E y>0 \E C>0 : \sum_{\ell \ge k} \frac{1}{\vt^y_\ell} \le C \, \frac{k}{\vt^x_k}$.
    \item $\exists x >0 \E y>0 \E C>0 : \sum_{\ell \ge k} \frac{1}{\vt^y_\ell} \le C \, \frac{k}{\vt^x_k}$.
  \end{enumerate}
  If $\om$ is strong, then there exists a collection $\fS = \{S^x\}_{x>0}$ of strongly log-convex sequences such that 
  \begin{equation} \label{eq:sivt}
    \A x >0 \E y>0 : \si^x \lesssim \vt^y \quad \text{ and } \quad \A x >0 \E y>0 : \vt^x \lesssim \si^y
  \end{equation}
  and $\om$ is good.
\end{theorem}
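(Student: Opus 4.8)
The plan is to prove the cyclic chain $(2)\Rightarrow(3)\Rightarrow(1)\Rightarrow(2)$ and then to harvest the construction of $\fS$ from the summability condition $(2)$. The implication $(2)\Rightarrow(3)$ is immediate, since the universally quantified statement $(2)$ specializes to the existential statement $(3)$. The implication $(3)\Rightarrow(1)$ is nothing but \Cref{cor:weakcond}: condition $(3)$ is precisely \eqref{eq:weakcond}, which forces $\om$ to be strong. Thus the only substantial equivalence to establish is $(1)\Rightarrow(2)$.

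For $(1)\Rightarrow(2)$ I would argue as in \Cref{rem:weakcond}, but keeping careful track of the quantifier on $x$. By \Cref{thm:preservingclass}, strongness of $\om$ is equivalent to surjectivity of the class-preserving jet mapping $j^\infty_E:\cB^{\{\om\}}(\R^n)\to\cB^{\{\om\}}(E)$ for every compact $E\subseteq\R^n$. Using the representation \eqref{eq:repR} of $\cB^{\{\om\}}$ as the inductive limit $\on{ind}_{x>0}\cB^{\{W^x\}}$, this surjectivity translates, by \cite[Corollary 5.13]{RainerSchindl16a}, into a summability condition on the matrix $\fW$ carrying exactly the ``for all $x$ there is $y$'' quantifier that is natural for the target inductive limit. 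The assumption \eqref{gmg} is what permits one to pass freely between $(W^y_k)^{1/k}$ and $\vt^y_k$ (recall $(W^y_k)^{1/k}\le\vt^y_k$ always, by \eqref{mucompare}), and thereby to rewrite that condition in the precise form $(2)$.

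Granting $(2)$, I would produce $\fS$ by feeding the matrix $\fW$ into \Cref{lem:strongmatrix}. Its hypotheses are met: the $W^x$ are weight sequences with $\vt^x\lesssim\vt^y$ for $x\le y$ by \Cref{lemma4}\eqref{lemma4(2)}; they are non-quasianalytic for all large $x$, since $(2)$ with $k=1$ gives $\sum_\ell 1/\vt^y_\ell<\infty$ for a suitable $y$, whence monotonicity of $\vt$ propagates non-quasianalyticity to the cofinal family $\{W^{x}:x\ge y\}$, to which (after relabelling) the lemma applies; and the summability hypothesis of the lemma is exactly $(2)$. The lemma returns strongly log-convex $S^x$ with $1\le\si^x_k/k$ increasing (its property (1)) and with its property (4) being verbatim \eqref{eq:sivt}. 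Finally, to see that $\om$ is good I would combine these: given $x$, choose $z$ with $\vt^x\lesssim\si^z$ and then $y$ with $\si^z\lesssim\vt^y$ (both by \eqref{eq:sivt}); for $1\le j\le k$ the strong log-convexity of $S^z$ yields $\si^z_j/j\le\si^z_k/k$, so that $\vt^x_j/j\lesssim\si^z_j/j\le\si^z_k/k\lesssim\vt^y_k/k$, which is precisely \eqref{eq:good}.

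The main obstacle is the implication $(1)\Rightarrow(2)$: one must extract a quantitatively sharp, universally quantified summability statement about the $\vt^x$ from the merely qualitative surjectivity furnished by \Cref{thm:preservingclass}. This is where both the external matrix-extension characterization \cite[Corollary 5.13]{RainerSchindl16a} and the structural hypothesis \eqref{gmg} (which reconciles $\vt^x_k$ with the geometric means $(W^x_k)^{1/k}$) are indispensable. Everything after $(2)$ is the soft part: the construction of $\fS$ is a direct invocation of \Cref{lem:strongmatrix}, and the passage to goodness of $\om$ is the short intertwining computation above.
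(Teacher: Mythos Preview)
Your proposal is correct and follows essentially the same route as the paper: the equivalences $(1)\Leftrightarrow(2)\Leftrightarrow(3)$ via \Cref{cor:weakcond}, \Cref{rem:weakcond}, and \cite[Corollary~5.13]{RainerSchindl16a}; the construction of $\fS$ via \Cref{lem:strongmatrix}; and goodness via the intertwining chain $\vt^x_j/j \lesssim \si^z_j/j \le \si^z_k/k \lesssim \vt^y_k/k$. The only addition in the paper is that it also notes the alternative route to goodness through \Cref{thm:suffgood} (since a strong weight function is equivalent to a concave one), but your direct computation via \eqref{eq:sivt} is one of the two proofs the paper itself gives.
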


Note that \eqref{gmg} is only needed for $(1) \Rightarrow (2)$.

\begin{proof}
  For the equivalence of (1), (2), and (3), see \Cref{cor:weakcond}, \Cref{rem:weakcond}, and the references cited therein.

  \Cref{lem:strongmatrix} implies the statement about $\fS$. 
  Goodness of $\om$ follows either from \Cref{thm:suffgood}, since a strong weight function is equivalent to a concave one, or 
  from the strong log-convexity of the $S^x$ and \eqref{eq:sivt}: for each $x>0$ we find $y,z>0$ such that
  \[
  \frac{\vt^x_j}{j} \lesssim \frac{\si^y_j}{j} \le \frac{\si^y_k}{k} \lesssim \frac{\vt^z_k}{k}, 
  \quad \text{ for } 1 \le j \le k.  \qedhere
  \]
\end{proof}

We remark that the condition \eqref{eq:sivt} entails $\cB^{\{\om\}}(\R^n) = \on{ind}_{x>0} \cB^{\{S^x\}}(\R^n)$, 
by \Cref{representation}.

\def\cprime{$'$}
\providecommand{\bysame}{\leavevmode\hbox to3em{\hrulefill}\thinspace}
\providecommand{\MR}{\relax\ifhmode\unskip\space\fi MR }
\providecommand{\MRhref}[2]{%
  \href{http://www.ams.org/mathscinet-getitem?mr=#1}{#2}
}
\providecommand{\href}[2]{#2}


\end{document}